\theoremstyle{plain}
\newtheorem{theorem}{Theorem}[section]
\newtheorem{proposition}[theorem]{Proposition}
\newtheorem{lemma}[theorem]{Lemma}
\newtheorem{corollary}[theorem]{Corollary}
\newtheorem{conjecture}[theorem]{Conjecture}
\theoremstyle{definition} 
\newtheorem{definition}[theorem]{Definition}
\theoremstyle{remark} 
\newtheorem{remark}[theorem]{Remark}
\newtheorem{question}[theorem]{Question}
\numberwithin{equation}{section}
\newcommand{\ind}{\textup{\,Ind}}
\newcommand{\tr}{\hspace{.3mm}\mathrm{tr}\hspace{.3mm}}
\newcommand{\ev}{\mathrm{ev}}
\newcommand{\dist}{\mathrm{dist}}
\newcommand{\spin}{\textup{spin}}
\newcommand{\SL}{\textup{SL}}
\newcommand{\sD}{\slashed{D}}
\begin{document}
\title[Approximations of delocalized eta invariants]{Approximations of delocalized eta invariants by their finite analogues}
\author{Jinmin Wang}
\address[Jinmin Wang]{School of Mathematical Sciences and Shanghai Center for Mathematical Sciences, Fudan University}
\email{wangjinmin@fudan.edu.cn}
\thanks{The first author is partially supported by NSFC 11420101001.}

\author{Zhizhang Xie}
\address[Zhizhang Xie]{Department of Mathematics, Texas A\&M University}
\email{xie@math.tamu.edu}
\thanks{The second author is partially supported by NSF 1800737.}

\author{Guoliang Yu}
\address[Guoliang Yu]{Department of Mathematics, Texas A\&M University}
\email{guoliangyu@math.tamu.edu}
\thanks{The third author is partially supported by NSF 1700021, NSF 1564398 and Simons Fellows Program.}

\maketitle
{\centering\footnotesize \em In memory of Vaughan Jones.\par}
\begin{abstract}
For a given self-adjoint first order elliptic differential operator on a closed smooth manifold,   we prove a list of results on when  the delocalized eta invariant associated to a regular covering space can be approximated by  the delocalized eta invariants associated to finite-sheeted covering spaces. One of our main results is the following. Suppose $M$ is a closed smooth spin manifold and $\widetilde M$ is a $\Gamma$-regular covering space of $M$. Let   $\langle \alpha \rangle$ be the conjugacy class of a non-identity element $\alpha\in \Gamma$. Suppose $\{\Gamma_i\}$ is a sequence of finite-index normal subgroups of $\Gamma$ that distinguishes $\langle \alpha \rangle$. Let  $\pi_{\Gamma_i}$ be the quotient map from $\Gamma$ to $\Gamma/\Gamma_i$ and $\langle \pi_{\Gamma_i}(\alpha) \rangle$ the conjugacy class of $\pi_{\Gamma_i}(\alpha)$ in $\Gamma/\Gamma_i$.   If the scalar curvature on $M$ is everywhere bounded below by a sufficiently large positive number,  then the  delocalized eta invariant for the Dirac operator of $\widetilde M$ at the conjugacy class $\langle \alpha \rangle$  is equal to the limit of the delocalized eta invariants for the Dirac operators  of $M_{\Gamma_i}$ at the conjugacy class  $\langle \pi_{\Gamma_i}(\alpha) \rangle$,  where $M_{\Gamma_i}= \widetilde M/\Gamma_i$ is the finite-sheeted covering space of $M$ determined by $\Gamma_i$.  In another main result of the paper, we prove that the limit of the delocalized eta invariants for the Dirac operators  of $M_{\Gamma_i}$ at the conjugacy class  $\langle \pi_{\Gamma_i}(\alpha) \rangle$ converges, under the assumption that the rational maximal Baum-Connes conjecture holds for $\Gamma$.
\end{abstract}

\section{Introduction}
The delocalized eta invariant for self-adjoint elliptic operators was first introduced by Lott \cite{Lott} as a natural extension of the classical eta invariant of Atiyah-Patodi-Singer \cite{A-P-S75a, A-P-S75b, A-P-S76}. It is a fundamental invariant in the studies of higher index theory on manifolds with boundary,  positive scalar curvature metrics on spin manfolds and rigidity problems in topology. More precisely, the delocalized eta invariant can be used to detect different connected components of the space of positive scalar curvature metrics on a given closed spin manifold \cite{MR1339924, LeichtnamPos}. Furthermore, it can be used to give an estimate of the connected components of the moduli space of positive scalar curvature metrics on a given closed spin manifold \cite{MR3590536}. Here the moduli space is obtained by taking the quotient of the space of positive scalar curvature metrics under the action of self-diffeomorphisms of the underlying manifold. As for applications to topology, the delocalized eta invariant can be applied to estimate the size of the structure group of a given closed topological manifold \cite{Weinberger:2016dq}. The delocalized eta invaraint is also closely related to the Baum-Connes conjecture. The second and third authors showed that if the Baum-Connes conjecture holds for a given group $\Gamma$, then\footnote{There is also an extra technical assumption that the conjugacy class $\langle \alpha \rangle$ used in the definition of the delocalized eta invariant is required to have polynomial growth.  }  the delocalized eta invariant associated to any regular $\Gamma$-covering space is an algebraic number \cite{Xie}. In particular, if a delocalized eta invariant is transcendental, then it would lead to  a counterexample to the Baum-Connes conjecture.    We refer the reader to \cite{Xie:2019aa}  for a more detailed survey of the delocalized eta invariant and its higher analogues.   


The delocalized eta invariant, despite being defined in terms of an explicit integral formula, is difficult to compute in general, due to its non-local nature.  The main purpose of this article is to study when the delocalized eta invariant associated to the universal covering of a space can be approximated by  the delocalized eta invariants associated to finite-sheeted coverings, where the latter are easier to compute. 

Let us first recall the definition of delocalized eta invariants. Let $M$ be a closed manifold and $D$ a self-adjoint elliptic differential operator on $M$. Suppose $\Gamma$ is a discrete group and $\widetilde M$ is a $\Gamma$-regular covering space of $M$. Denote by $\widetilde D$ the lift of $D$ from $M$ to $\widetilde M$. For any non-identity  element $\alpha \in
\Gamma$, the delocalized eta invariant $\eta_{\left\langle \alpha \right\rangle }(\widetilde D)$ of $\widetilde D$ at the conjugacy class $\langle \alpha \rangle$ is defined to be
\begin{equation}\label{eq:delocalizedeta}
\eta_{\left\langle \alpha \right\rangle }(\widetilde D)\coloneqq 
\frac{2}{\sqrt\pi}\int_{0}^\infty \sum_{\gamma \in\langle \alpha \rangle} \int_\mathcal F \tr(K_t(x,\gamma x))dxdt,
\end{equation}
where $K_t(x,y)$ is the Schwartz kernel of the operator $\widetilde D e^{-t^2\widetilde D^2}$ and $\mathcal F$ is a fundamental domain of $M_\Gamma$ under the action of $\Gamma$. 

\added{We point out that it is still open question whether the convergence of the integral in line \eqref{eq:delocalizedeta} holds in general. A list of cases where the convergence is known to hold is given right after  Definition \ref{def:deloceta}. In particular, if $\Gamma$ is finite, then the integral in line \eqref{eq:delocalizedeta} always converges. 
}

Now we consider finite-sheeted cover of $M$ given by finite-index normal subgroups of $\Gamma$. 
\begin{definition}\label{def:distinguishintro}
	Suppose that $\{\Gamma_i\}$ is a sequence of finite-index normal subgroups of $\Gamma$.
	For any non-trivial conjugacy class $\langle \alpha\rangle$ of $\Gamma$, we say that $\{\Gamma_i\}$ distinguishes $\langle \alpha\rangle$, if for any finite set $F$ in $\Gamma$ there exists $k\in\mathbb N_+$ such that 
	$$\forall \beta\in F,\ \beta\notin\langle \alpha\rangle\implies \pi_{
		\Gamma_i}(\beta)\notin\langle \pi_{\Gamma_i}(\alpha)\rangle
	$$
	for all $i\geqslant k$.
\end{definition}

Let $M_{\Gamma_i} = \widetilde M/\Gamma_i$ be the associated finite-sheeted covering space of $M$ and $D_{\Gamma_i}$ the lift of $D$ from $M$ to $M_{\Gamma_i}$. The delocalized eta invariant $\eta_{\langle \pi_{\Gamma_i}(\alpha) \rangle}(D_{\Gamma_i})$ of $D_{\Gamma_i}$ is defined similarly as in line $\eqref{eq:delocalizedeta}$, where $\pi_{\Gamma_i}$ is the canonical quotient map from $\Gamma$ to $\Gamma/\Gamma_i$.
Suppose $\{\Gamma_i\}$ distinguishes the conjugacy class $\langle \alpha \rangle$ of a non-identity element $\alpha\in \Gamma$, which is necessary for the following discussions.
  We prove a list of results that answer positively either one or both of the following questions. 
\begin{enumerate}
		\item  Does $\displaystyle  \lim_{i\to\infty}\eta_{\langle \pi_{\Gamma_i}(\alpha)\rangle }(D_{\Gamma_i})$
	exist?
	\item  If $\displaystyle  \lim_{i\to\infty}\eta_{\langle \pi_{\Gamma_i}(\alpha )\rangle }(D_{\Gamma_i})$ exists, is the limit equal to  $\eta_{\left\langle \alpha \right\rangle }(\widetilde D)$?
\end{enumerate}

For simplicity, we assume that  $M$ is a closed spin manifold equipped with a Riemannian metric of positive scalar curvature throughout the paper. Positive scalar curvature implies $\widetilde D$ has a spectral gap. In fact, the majority of results\footnote{such as Theorem $\ref{thm:A}$, Theorem $\ref{thm:B}$, Theorem $\ref{thm:conjcontrol}$ and Proposition $\ref{prop:sep}$ } in this paper can be proved in the same way under the assumption that   $\widetilde D$ has a spectral gap or  a sufficiently large spectral gap.

Here is one of the main results of our paper. 
\begin{theorem}\label{thm:A}
	With the above notation, assume that $\widetilde D$ is invertible and $\{\Gamma_i\}$ distinguishes the conjugacy class $\langle \alpha \rangle$ of a non-identity element $\alpha\in \Gamma$. If the maximal Baum-Connes assembly map for $\Gamma$ is rationally an isomorphism,  then the limit
	\[ \lim_{i\to \infty}\eta_{\langle\pi_{\Gamma_i}(\alpha)\rangle}(D_{\Gamma_i}) \] stabilizes, that is, $ \exists k>0$ such that   $\eta_{\langle\pi_{\Gamma_i}(\alpha)\rangle}(D_{\Gamma_i}) = \eta_{\langle\pi_{\Gamma_k}(\alpha)\rangle}(D_{\Gamma_k})
	$ for all $ i\geqslant k.$
\end{theorem}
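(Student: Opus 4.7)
The plan is to convert the problem into a K-theoretic pairing and then exploit both rational maximal Baum--Connes and the distinguishing property of $\{\Gamma_i\}$. Since $M$ carries a positive scalar curvature metric, the lift $\widetilde D$ is invertible and so defines a rho-class $\rho(\widetilde D)\in K_*(C^*_{\max}(\Gamma))$ (in the parity determined by $\dim M$). Each $D_{\Gamma_i}$ likewise produces a rho-class $\rho(D_{\Gamma_i})\in K_*(\mathbb{C}[\Gamma/\Gamma_i])$, and by naturality under the $*$-surjection $\pi_{\Gamma_i}\colon C^*_{\max}(\Gamma)\twoheadrightarrow\mathbb{C}[\Gamma/\Gamma_i]$ one has $\rho(D_{\Gamma_i})=(\pi_{\Gamma_i})_*\rho(\widetilde D)$. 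Using the standard trace--rho pairing, I would rewrite the delocalized eta invariant as
\[
\eta_{\langle\pi_{\Gamma_i}(\alpha)\rangle}(D_{\Gamma_i}) = \tau_{\langle\pi_{\Gamma_i}(\alpha)\rangle}\!\bigl((\pi_{\Gamma_i})_*\rho(\widetilde D)\bigr) = \widetilde\tau_i\bigl(\rho(\widetilde D)\bigr),
\]
where $\widetilde\tau_i:=\tau_{\langle\pi_{\Gamma_i}(\alpha)\rangle}\circ\pi_{\Gamma_i}$ is a bounded trace on $C^*_{\max}(\Gamma)$ (boundedness is automatic since $\Gamma/\Gamma_i$ is finite). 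The problem therefore reduces to stabilization of the scalar sequence $\widetilde\tau_i(\rho(\widetilde D))$.

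Next I would invoke the rational maximal Baum--Connes isomorphism together with the (equivariant Chern character) decomposition of $K^\Gamma_*(\underline{E\Gamma})\otimes\mathbb{Q}$ into contributions indexed by conjugacy classes of finite-order elements. This yields an $N\in\mathbb{Z}_{>0}$, finite subgroups $H_1,\dots,H_r\leqslant\Gamma$, integers $n_j$, and classes $z_j\in K_*(\mathbb{C}[H_j])$ such that
\[
N\rho(\widetilde D)-\sum_{j=1}^{r}n_j\,(\iota_j)_*(z_j)\;\in\;K_*(C^*_{\max}(\Gamma))_{\mathrm{tors}},
\]
where $\iota_j\colon\mathbb{C}[H_j]\hookrightarrow C^*_{\max}(\Gamma)$ is the natural inclusion. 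Because any bounded trace factors through $K_*\otimes\mathbb{R}$ and hence annihilates torsion, applying $\widetilde\tau_i$ gives the identity
\[
N\,\widetilde\tau_i(\rho(\widetilde D)) \;=\; \sum_{j=1}^{r}n_j\,\widetilde\tau_i\bigl((\iota_j)_*(z_j)\bigr).
\]

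Each summand $\widetilde\tau_i((\iota_j)_*(z_j))$ is computable entirely inside $\mathbb{C}[H_j]$: once a representative of $z_j$ is fixed (a projection or unitary in a matrix algebra over $\mathbb{C}[H_j]$), the pairing becomes a fixed $\mathbb{C}$-linear expression in the values $\{\widetilde\tau_i(h):h\in H_j\}$. Taking $F:=H_1\cup\cdots\cup H_r$, a finite subset of $\Gamma$, the distinguishing hypothesis supplies $k$ such that for every $i\geqslant k$ and every $\beta\in F$, the condition $\pi_{\Gamma_i}(\beta)\in\langle\pi_{\Gamma_i}(\alpha)\rangle$ is equivalent to $\beta\in\langle\alpha\rangle$. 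Consequently $\widetilde\tau_i|_F$ equals the fixed indicator $\mathbf 1_{\langle\alpha\rangle\cap F}$ for all $i\geqslant k$, independently of $i$. Each $\widetilde\tau_i((\iota_j)_*(z_j))$ is therefore constant for $i\geqslant k$, whence $N\,\eta_{\langle\pi_{\Gamma_i}(\alpha)\rangle}(D_{\Gamma_i})$ is constant for $i\geqslant k$, and dividing by $N$ gives the desired stabilization.

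The hard part will be setting up the K-theoretic framework rigorously at the maximal $C^*$-algebra level: establishing naturality of the rho-class under the surjections $\pi_{\Gamma_i}$ and extracting the finite-subgroup decomposition of $\rho(\widetilde D)\otimes\mathbb{Q}$ from rational maximal Baum--Connes. Both ingredients are known in the literature of Piazza--Schick and the authors' earlier work, but need careful bookkeeping in the maximal completion. Once they are in place, the remainder of the argument is a purely combinatorial application of the distinguishing condition to a finite collection of group elements. Note that this strategy proves only stabilization (question (1) in the introduction); identifying the limit with $\eta_{\langle\alpha\rangle}(\widetilde D)$ would require extending $\tau_{\langle\alpha\rangle}$ across $C^*_{\max}(\Gamma)$, which is not automatic and is the subject of separate results in the paper.
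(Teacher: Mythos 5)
Your overall skeleton --- reduce the problem to the stabilization of delocalized traces of a $K$-theory class with ``finite support'' in $\Gamma$, then apply the distinguishing hypothesis to that finite support --- matches the paper's proof, and your final combinatorial step is fine. However, the way you manufacture the finitely supported representative contains a genuine error. You claim that the rational maximal Baum--Connes isomorphism together with the equivariant Chern character produces finite subgroups $H_j\leqslant\Gamma$ and classes $z_j\in K_*(\mathbb{C}[H_j])$ with $N\rho(\widetilde D)-\sum_j n_j(\iota_j)_*(z_j)$ torsion. This is false in general: the Chern character decomposes $K_*^\Gamma(\underline{E}\Gamma)\otimes\mathbb{C}$ over conjugacy classes of finite-order elements, but the contribution of a class $\langle\gamma\rangle$ involves the full homology of the centralizer of $\gamma$, and only its degree-zero part is reached by induction from finite subgroups. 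Concretely, take $\Gamma=\mathbb{Z}^2\times\mathbb{Z}/k$ (amenable, so the hypothesis on $\mu_{\max}$ holds) and $M=T^2\times L$ with $L$ a lens space whose positive scalar curvature metric is scaled small; by the product formula for secondary invariants the $K$-theoretic preimage of $\rho(\widetilde D)$ is rationally $\beta\otimes p_L$, where $\beta\in K_0(C^*(\mathbb{Z}^2))$ is the Bott class. This is not in the rational span of classes induced from finite subgroups of $\Gamma$, which span only $[1]\otimes R(\mathbb{Z}/k)$ (and this defect persists after modifying the preimage by the image of the free assembly map). Hence your displayed decomposition, and the identity you derive from it by applying $\widetilde\tau_i$, do not hold. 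One could try to rescue the argument by proving that each $\widetilde\tau_i$ annihilates the complement of the finite-subgroup part --- an Atiyah--Segal localization statement for every finite quotient --- but that is an additional nontrivial step you have not supplied.

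The paper obtains the finitely supported representative by a different and correct route: rational surjectivity of $\mu_{\max}$ yields a preimage $[p]\in K_0(C^*_{\max}(\Gamma))$ of $\rho_{\max}(\widetilde D)$ under the boundary map $\partial$ of the localization-algebra long exact sequence (note also that the rho class lives in $K_1(C^*_{L,0,\max}(E\Gamma)^\Gamma)$, not in $K_*(C^*_{\max}(\Gamma))$ as you write, and the eta invariant is recovered through the determinant map via $\tfrac12\eta_{\langle\pi_{\Gamma_i}(\alpha)\rangle}(D_{\Gamma_i})=\tr_{\langle\pi_{\Gamma_i}(\alpha)\rangle}(p_i)$ with $\partial p_i=\rho(D_{\Gamma_i})$, rather than by a literal trace pairing with $\rho$). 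The Baum--Douglas model then realizes $[p]$ as the higher index of a twisted Dirac operator, and a normalizing function with compactly supported Fourier transform represents that index by a formal difference of idempotents in $\mathcal S\otimes\mathbb{C}\Gamma$, i.e.\ with finite propagation. Finite propagation is available for every class in the image of the assembly map; finite-subgroup induction is not. With that replacement, your remaining steps (functoriality of $\rho$ under $\pi_{\Gamma_i}$, vanishing of traces on torsion, and the distinguishing hypothesis applied to the finite support) go through as in the paper.
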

Here we say $\{\Gamma_i\}$ distinguishes the conjugacy class $\langle \alpha \rangle$  if for any finite set $F$ in $\Gamma$, there exists $k\in\mathbb N_+$ such that 
	$$\forall \beta\in F,\ \beta\notin\langle \alpha\rangle\implies \pi_{
		\Gamma_i}(\beta)\notin\langle \pi_{\Gamma_i}(\alpha)\rangle
	$$
	for all $i\geqslant k$.

By a theorem of Higson and Kasparov \cite[Theorem 1.1]{MR1821144}, the maximal Baum-Connes assembly map is an isomorphism for all a-T-menable groups. We have the following immediate corollary. 
\begin{corollary}
With the above notation, assume that $\widetilde D$ is invertible and $\{\Gamma_i\}$ distinguishes the conjugacy class $\langle \alpha \rangle$ of a non-identity element $\alpha\in \Gamma$. If $\Gamma$ is a-T-menable, then the limit
\[ \lim_{i\to \infty}\eta_{\langle\pi_{\Gamma_i}(\alpha)\rangle}(D_{\Gamma_i}) \] stabilizes.
\end{corollary}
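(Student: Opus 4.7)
The plan is an essentially mechanical reduction to Theorem \ref{thm:A}: the only hypothesis of that theorem not already built into the statement of the corollary is that the maximal Baum--Connes assembly map for $\Gamma$ is rationally an isomorphism, and this is precisely what the Higson--Kasparov theorem delivers for a-T-menable groups. So my strategy has just two steps, and neither carries any real difficulty.

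\textbf{Step 1.} I would invoke \cite[Theorem 1.1]{MR1821144}, which asserts that for any a-T-menable (Haagerup) group $\Gamma$, the maximal Baum--Connes assembly map
\[
\mu_{\max}\colon K^{\Gamma}_{\ast}(\underline{E}\Gamma) \longrightarrow K_{\ast}(C^{\ast}_{\max}(\Gamma))
\]
is an isomorphism. An integral isomorphism is in particular a rational isomorphism (tensoring with $\mathbb{Q}$ preserves isomorphisms), so the rational version of the hypothesis in Theorem \ref{thm:A} is automatically satisfied.

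\textbf{Step 2.} Given that $\widetilde{D}$ is invertible and that $\{\Gamma_i\}$ distinguishes $\langle \alpha\rangle$ (these are the remaining hypotheses of the corollary, carried over verbatim from Theorem \ref{thm:A}), I would apply Theorem \ref{thm:A} directly. Its conclusion already contains the stabilization statement we want: there exists $k>0$ such that
\[
\eta_{\langle \pi_{\Gamma_i}(\alpha)\rangle}(D_{\Gamma_i}) = \eta_{\langle \pi_{\Gamma_k}(\alpha)\rangle}(D_{\Gamma_k})
\qquad \text{for all } i \geqslant k,
\]
which in particular gives existence of $\lim_{i\to\infty}\eta_{\langle\pi_{\Gamma_i}(\alpha)\rangle}(D_{\Gamma_i})$.

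There is no genuine obstacle here; all the substantive work sits inside Theorem \ref{thm:A}. The corollary is best viewed as a packaging of that theorem with the largest readily-identifiable class of groups to which the rational maximal Baum--Connes hypothesis is known unconditionally to apply. If anything is worth a brief sentence, it is to emphasize that one only needs ``rationally an isomorphism,'' whereas Higson--Kasparov provides the stronger statement that $\mu_{\max}$ is an isomorphism on the nose.
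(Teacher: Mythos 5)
Your proposal is correct and is exactly the paper's argument: the corollary is stated as an immediate consequence of Theorem \ref{thm:A} via the Higson--Kasparov theorem that $\mu_{\max}$ is an isomorphism for a-T-menable groups, hence in particular rationally an isomorphism. Nothing further is needed.
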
 

Note that Theorem $\ref{thm:A}$ and its corollary above only addresses the first question, that is, only the convergence of  $\displaystyle  \lim_{i\to\infty}\eta_{\langle \pi_{\Gamma_i}(\alpha)\rangle }(D_{\Gamma_i})$. On the other hand, if in addition there exists a smooth dense subalgebra\footnote{A smooth dense subalgebra of $C^\ast_r(\Gamma)$  is a dense subalgebra of $C^\ast_r(\Gamma)$  that is closed under holomorphic functional calculus.} $\mathcal A$ of the reduced group $C^\ast$-algebra $C_r^\ast(\Gamma)$  of $\Gamma$ 
such that  $\mathbb C\Gamma\subset \mathcal A$ and the trace map\footnote{The trace map $ \tr_{\langle \alpha \rangle}$ is given by the formula: $ \sum_{\beta\in \Gamma}a_\beta \beta\mapsto \sum_{\beta\in\left\langle \alpha \right\rangle }a_\beta.$} $\tr_{\langle \alpha \rangle }\colon \mathbb C\Gamma \to \mathbb C$  extends continuously to a trace map $\tr_{\langle \alpha \rangle}\colon \mathcal A \to \mathbb C$, then we have
\[ \lim_{i\to \infty}\eta_{\langle\pi_{\Gamma_i}(\alpha)\rangle}(D_{\Gamma_i}) = \eta_{\left\langle \alpha \right\rangle }(\widetilde D). \]
See the discussion at the end of Section $\ref{sec:max}$ for more details.


Here is another main result of our paper. 
\begin{theorem}\label{thm:B}
With the above notation, suppose $\{\Gamma_i\}$ distinguishes the conjugacy class $\langle \alpha \rangle$ of a non-identity element $\alpha\in \Gamma$. If the spectral gap of $\widetilde D$ at zero is sufficiently large, then we have
\[ \lim_{i\to \infty}\eta_{\langle\pi_{\Gamma_i}(\alpha)\rangle}(D_{\Gamma_i}) = \eta_{\left\langle \alpha \right\rangle }(\widetilde D). \]
\end{theorem}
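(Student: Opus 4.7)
The plan is to lift both delocalized eta invariants to integrals over the universal cover $\widetilde M$ and to compare them term-by-term, using finite propagation and uniform Gaussian off-diagonal kernel estimates. First, unfolding the $\Gamma_i$-covering $\widetilde M \to M_{\Gamma_i}$, the Schwartz kernel of $D_{\Gamma_i} e^{-t^2 D_{\Gamma_i}^2}$ is obtained from $K_t$ by summing $\Gamma_i$-translates. Using the fact that $\pi_{\Gamma_i}^{-1}(\langle \pi_{\Gamma_i}(\alpha)\rangle) = \Gamma_i \cdot \langle\alpha\rangle$ decomposes into $\Gamma_i$-cosets indexed bijectively by $\langle \pi_{\Gamma_i}(\alpha)\rangle$, one obtains
\begin{equation*}
\eta_{\langle \pi_{\Gamma_i}(\alpha)\rangle}(D_{\Gamma_i}) = \frac{2}{\sqrt\pi}\int_0^\infty \int_\mathcal F \sum_{\delta \in \Gamma_i \cdot \langle\alpha\rangle} \tr K_t(x, \delta x)\, dx\, dt.
\end{equation*}
Comparing with the analogous integral for $\eta_{\langle\alpha\rangle}(\widetilde D)$ reduces the theorem to showing that the contributions from $\Gamma_i \cdot \langle\alpha\rangle \setminus \langle\alpha\rangle$ vanish in the limit.

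The key analytic input is a Gaussian off-diagonal bound
\begin{equation*}
|K_t(x,y)| \leq C\, t^{-n-1}\, e^{-\sigma^2 t^2/2}\, e^{-d(x,y)^2/(c t^2)},
\end{equation*}
which I would derive from finite propagation speed of $e^{is\widetilde D}$ together with the spectral gap $\widetilde D^2 \geq \sigma^2$ (uniform across all covers via Schr\"odinger--Lichnerowicz). A saddle-point evaluation of the $t$-integral then gives
\begin{equation*}
\int_0^\infty |K_t(x, \gamma x)|\, dt \leq C'\, e^{-c'\sigma\, d(x, \gamma x)},
\end{equation*}
so if $\sigma$ is large enough to dominate the exponential growth rate of $\Gamma$, the double sum--integral converges absolutely and uniformly in $i$.

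Given $\varepsilon > 0$, I would next choose $R$ large enough that the tail $\int_0^\infty \sum_{d(x,\delta x) > R} |K_t(x, \delta x)|\, dt < \varepsilon$ holds uniformly for both sums. The set $F \coloneqq \{\beta \in \Gamma : d(x, \beta x) \leq R \text{ for some } x \in \mathcal F\}$ is finite by the proper cocompact action, so the distinguishing hypothesis applied to $F$ produces a $k$ such that $F \cap \pi_{\Gamma_i}^{-1}(\langle \pi_{\Gamma_i}(\alpha)\rangle) = F \cap \langle\alpha\rangle$ for all $i \geq k$. For such $i$, the near-diagonal parts of the two sums coincide term-by-term and the two tails together contribute at most $2\varepsilon$. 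Letting $\varepsilon \to 0$ gives the claimed equality.

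The main obstacle is making ``sufficiently large spectral gap'' quantitative: since $\Gamma_i \cdot \langle\alpha\rangle$ has growth rate comparable to the whole group $\Gamma$ rather than to the (slimmer) conjugacy class $\langle\alpha\rangle$, the product $c'\sigma$ must exceed the exponential growth rate of $\Gamma$ in order for the absolute convergence to be uniform in $i$. Establishing the combined exponential-in-$t$ and Gaussian-in-$d$ off-diagonal bound on $K_t$, with constants independent of the cover, is the principal technical step of the argument.
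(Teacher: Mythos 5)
Your argument is correct and follows essentially the same route as the paper's own proof (given as Theorem \ref{main}): the same two uniform kernel estimates --- Gaussian off-diagonal decay and $e^{-\sigma^2 t^2}$ decay from the spectral gap --- are combined to yield exponential decay in $\dist(x,\gamma x)$ at a rate exceeding the exponential growth rate of $\Gamma$, and the distinguishing hypothesis is applied to a finite set of near-diagonal group elements exactly as in Lemma \ref{lemma:limitkerneltrace}. The only organizational difference is that you integrate in $t$ first and then estimate the tail of the group sum, whereas the paper fixes $t$, passes to the limit in $i$, and invokes dominated convergence in $t$; the quantitative threshold you leave open is the constant $\sigma_\Gamma$ of Definition \ref{def:cst}.
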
	

Here ``sufficiently larger spectral gap"  means that the spectral gap of $\widetilde D$ at zero is greater than $\sigma_\Gamma$, where $\sigma_\Gamma$ is the constant 
given in Definition $\ref{def:cst}$. In particular, if the group $\Gamma$ has subexponential growth, then it follows from Definition $\ref{def:cst}$ that $\sigma_\Gamma = 0$. In this case, if $\widetilde D$ has a spectral gap, then it is automatically sufficiently large, hence the following immediate corollary. 

\begin{corollary}
	With the above notation, suppose $\{\Gamma_i\}$ distinguishes the conjugacy class $\langle \alpha \rangle$ of a non-identity element $\alpha\in \Gamma$. If $\Gamma$ has subexponential growth and $\widetilde D$ has a spectral gap at zero , then we have
	\[ \lim_{i\to \infty}\eta_{\langle\pi_{\Gamma_i}(\alpha)\rangle}(D_{\Gamma_i}) = \eta_{\left\langle \alpha \right\rangle }(\widetilde D). \]
\end{corollary}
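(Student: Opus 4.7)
The plan is to derive this as an immediate consequence of Theorem \ref{thm:B}. The only point that needs checking is that, when $\Gamma$ has subexponential growth, the threshold $\sigma_\Gamma$ from Definition \ref{def:cst} vanishes; once this is known, any strictly positive spectral gap of $\widetilde D$ at zero is automatically larger than $\sigma_\Gamma$, and Theorem \ref{thm:B} directly yields the asserted equality.

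To confirm that $\sigma_\Gamma = 0$ under subexponential growth, I would unpack Definition \ref{def:cst} and identify $\sigma_\Gamma$ as some exponential growth rate — most plausibly an exponential growth rate of (a weighted count of) elements of $\Gamma$ inside word-length balls, arising from heat-kernel decay estimates needed to make the sum $\sum_{\gamma \in \langle\alpha\rangle} \tr K_t(x,\gamma x)$ converge. For a group of subexponential growth, the number of elements in any ball of radius $R$ grows subexponentially in $R$, and hence the same is true for the number of elements of any subset (in particular, any conjugacy class) inside such a ball. Any exponential rate extracted from such counts must therefore be $0$, giving $\sigma_\Gamma = 0$.

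With $\sigma_\Gamma = 0$ in hand, the hypothesis of Theorem \ref{thm:B} — that the spectral gap of $\widetilde D$ at zero exceeds $\sigma_\Gamma$ — reduces to the mere existence of a spectral gap at zero, which is exactly what is assumed. Applying Theorem \ref{thm:B} then yields
\[ \lim_{i\to\infty} \eta_{\langle \pi_{\Gamma_i}(\alpha)\rangle}(D_{\Gamma_i}) = \eta_{\langle \alpha\rangle}(\widetilde D), \]
as required. The only potential obstacle is verifying the vanishing $\sigma_\Gamma = 0$ directly from the precise form of Definition \ref{def:cst}; since the authors flag this deduction as ``immediate,'' I expect this step to be a routine unpacking of definitions, with no further analytic input beyond the subexponential growth hypothesis.
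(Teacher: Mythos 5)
Your proposal is correct and matches the paper's proof exactly: the paper defines $\sigma_\Gamma = 2K_\Gamma/\theta_0$ with $K_\Gamma$ the infimal exponential rate bounding $\#\{\gamma : \ell(\gamma) \le n\}$, observes that subexponential growth forces $K_\Gamma = 0$ (hence $\sigma_\Gamma = 0$, since $\theta_0 > 0$ by cocompactness of the action), and then invokes Theorem \ref{thm:B}. Your ``most plausibly an exponential growth rate of elements in word-length balls'' is precisely what $K_\Gamma$ is, so the hedge was warranted but the guess was right.
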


 There are other variants of Theorem $\ref{thm:B}$ above. We refer the reader to Theorem $\ref{thm:conjcontrol}$ and Proposition $\ref{prop:sep}$ for details.

The paper is organized as follows. In Section \ref{sec:pre}, we review some basic facts about conjugacy separable groups and certain geometric $C^\ast$-algebras. In Section \ref{sec:deleta}, we review some basics of delocalized eta invariants. In Section \ref{sec:max}, we  prove one of main results, Theorem $\ref{thm:A}$, and discuss some of its consequences. In Section $\ref{sec:sc}$ and $\ref{sec:sep}$, we prove Theorem $\ref{thm:B}$ and its variants. 

We would like to thank the referees for helpful and constructive comments.

\section{Preliminaries}\label{sec:pre}
In this section, we review some basic facts about conjugacy separable groups and certain geometric $C^\ast$-algebras. 
\subsection{Conjugacy separable groups}\label{Sec: Conjugacy separable}
We will prove our main approximation results for a particular class of groups, called conjugacy separable groups. In this subsection, we review some basic properties of conjugacy separable groups. In the following, all groups are assumed to be finitely generated,  unless otherwise specified.
\begin{definition}\label{conjdis}
	Let $\Gamma$ be a finitely generated discrete group. We say that $\gamma \in \Gamma$ is conjugacy distinguished if for any $\beta\in \Gamma$ that is not conjugate to $\gamma$, there exists a finite-index normal subgroup $\Gamma'$ of $\Gamma$ such that the image of $\beta$ in $\Gamma/\Gamma'$ is not conjugate to $\gamma$.
\end{definition}
If every element in $\Gamma$ is conjugacy distinguished, then we say that $\Gamma$ is conjugacy separable. In other words, we have the following definition of conjugacy separability. \begin{definition}\label{def:conjsep}
	A finitely generated group $\Gamma$ is conjugacy separable if for any $\gamma_1, \gamma_2\in \Gamma$ that are not conjugate, there exists a finite-index normal subgroup $\Gamma'$ of $\Gamma$ such that the image of $\gamma_1$ and $\gamma_2$ in $\Gamma/\Gamma'$ are not conjugate.
\end{definition}

For any normal subgroup $\Gamma'$ of $\Gamma$, we denote by $\pi_{\Gamma'}$ the quotient map from $\Gamma$ to $\Gamma/\Gamma'$. 
\begin{definition}\label{def:distinguish}
	Suppose that $\{\Gamma_i\}$ is a sequence of finite-index normal subgroups of $\Gamma$.
	For any non-trivial conjugacy class $\langle \alpha\rangle$ of $\Gamma$, we say that $\{\Gamma_i\}$ distinguishes $\langle \alpha\rangle$, if for any finite set $F$ in $\Gamma$ there exists $k\in\mathbb N_+$ such that 
	$$\forall \beta\in F,\ \beta\notin\langle \alpha\rangle\implies \pi_{
		\Gamma_i}(\beta)\notin\langle \pi_{\Gamma_i}(\alpha)\rangle
	$$
	for all $i\geqslant k$.
\end{definition}
If $\alpha\in \Gamma$ is conjugacy distinguished, then such sequence always exists. More generally,  let $\mathfrak N$ be the net of all normal subgroups of $\Gamma$ with finite indices. If $\alpha\in G$ is conjugacy distinguished in the sense of Definition $\ref{conjdis}$, then $\mathfrak N$ distinguishes $\langle \alpha\rangle$, that is, for any finite set $F\subset \Gamma$, there exists a finite index normal subgroup $\Gamma_F$ of $\Gamma$ such that 
\[ \forall \beta\in F,\ \beta\notin\langle \gamma\rangle\implies \pi_{\Gamma'}(\beta)\notin\langle \pi_{\Gamma'}(\gamma)\rangle \]
for all $\Gamma'\in \mathfrak N$ with $\Gamma'\supseteq \Gamma_F$.

Let $\mathbb C\Gamma$ be the group algebra of $\Gamma$ and $\ell^1(\Gamma)$ be the $\ell^1$-completion of $\mathbb C\Gamma$. For any normal subgroup $\Gamma'$ of $\Gamma$,  the quotient map $\pi_{\Gamma'}\colon \Gamma
\to \Gamma/\Gamma'$ naturally induces an algebra homomorphism $\pi_{\Gamma'}\colon \mathbb C\Gamma\to \mathbb C(\Gamma/\Gamma')$, which extends to a Banach algebra homomorphism  $\pi_{\Gamma'} \colon \ell^1(\Gamma) \to \ell^1(\Gamma/\Gamma')$.

For any conjugacy class $\langle \gamma \rangle$ of $\Gamma$, let $\tr_{\langle \gamma \rangle }\colon \mathbb C\Gamma\to \mathbb C$ be the trace map defined by the formula:
\[ \sum_{\beta\in \Gamma}a_\beta \beta\mapsto \sum_{\beta\in\left\langle \gamma\right\rangle }a_\beta. \]
The following lemma is obvious. 
\begin{lemma}\label{lemma:limitoftrace}
	If $\langle \alpha\rangle$ is a non-trivial conjugacy class of $\Gamma$ and $\{\Gamma_i\}$ is a sequence of finite-index normal subgroups that  distinguishes $\langle \alpha\rangle$, then 
	\[ \lim_{i\to\infty}\tr_{\langle\pi_{\Gamma_i}(\alpha)\rangle }(\pi_{\Gamma_i}(f))=\tr_{\langle \alpha\rangle}(f) \]
	for all
	$f\in \ell^1(\Gamma)$. Moreover, if $f\in \mathbb C\Gamma$, then the limit on the left hand side stabilizes, that is,  
	\[ \exists k>0 \textup{ such that }  \tr_{\langle\pi_{\Gamma_i}(\alpha)\rangle }(\pi_{\Gamma_i}(f))=
	\tr_{\langle \alpha\rangle }(f), \textup{ for all } i\geqslant k.  \]
\end{lemma}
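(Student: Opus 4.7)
The plan is as follows. Write $f = \sum_{\beta \in \Gamma} a_\beta \beta$. Since conjugation commutes with the quotient map, $\pi_{\Gamma_i}$ sends $\langle \alpha \rangle$ into $\langle \pi_{\Gamma_i}(\alpha) \rangle$; hence every $\beta \in \langle \alpha \rangle$ automatically contributes to $\tr_{\langle \pi_{\Gamma_i}(\alpha) \rangle}(\pi_{\Gamma_i}(f))$ with coefficient $a_\beta$. The difference therefore only picks up the ``stray'' contributions, namely
\[
\tr_{\langle \pi_{\Gamma_i}(\alpha) \rangle}(\pi_{\Gamma_i}(f)) - \tr_{\langle \alpha \rangle}(f) \;=\; \sum_{\substack{\beta \notin \langle \alpha \rangle \\ \pi_{\Gamma_i}(\beta) \in \langle \pi_{\Gamma_i}(\alpha) \rangle}} a_\beta.
\]
All the analysis reduces to controlling this sum.

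First I would treat the case $f \in \mathbb{C}\Gamma$, where $F \coloneqq \mathrm{supp}(f)$ is finite. Applying Definition \ref{def:distinguish} to this $F$ produces a $k$ such that for every $i \geq k$ and every $\beta \in F$ with $\beta \notin \langle \alpha \rangle$ one has $\pi_{\Gamma_i}(\beta) \notin \langle \pi_{\Gamma_i}(\alpha) \rangle$. For such $i$ the displayed sum is empty, and therefore $\tr_{\langle \pi_{\Gamma_i}(\alpha) \rangle}(\pi_{\Gamma_i}(f)) = \tr_{\langle \alpha \rangle}(f)$. This gives the stabilization statement at once.

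To extend to $f \in \ell^1(\Gamma)$, I would use a standard density argument. Note that both $\tr_{\langle \alpha \rangle}$ and $\tr_{\langle \pi_{\Gamma_i}(\alpha) \rangle} \circ \pi_{\Gamma_i}$ are contractive on $\ell^1(\Gamma)$: indeed $\|\pi_{\Gamma_i}(h)\|_1 \leq \|h\|_1$ and $|\tr_{\langle \gamma \rangle}(h)| \leq \|h\|_1$ for every conjugacy class. Given $\varepsilon > 0$, choose $g \in \mathbb{C}\Gamma$ with $\|f - g\|_1 < \varepsilon/3$, apply the stabilization for $g$ to find $k$ with $\tr_{\langle \pi_{\Gamma_i}(\alpha) \rangle}(\pi_{\Gamma_i}(g)) = \tr_{\langle \alpha \rangle}(g)$ for all $i \geq k$, and combine the three bounds by the triangle inequality to conclude
\[
\bigl|\tr_{\langle \pi_{\Gamma_i}(\alpha) \rangle}(\pi_{\Gamma_i}(f)) - \tr_{\langle \alpha \rangle}(f)\bigr| \;<\; \varepsilon
\]
for all $i \geq k$.

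There is no real obstacle here; the lemma is formal. The only point that deserves a moment of thought is making sure that the trace $\tr_{\langle \alpha \rangle}$ and its finite-quotient analogues are simultaneously bounded by $\|\cdot\|_1$ and that the distinguishing condition is applied to the support of the approximating finite sum rather than directly to $f$ (whose support may be infinite).
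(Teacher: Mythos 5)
Your proof is correct. The paper offers no argument for this lemma (it is simply declared ``obvious''), and your two-step argument --- exact stabilization on $\mathbb{C}\Gamma$ via the distinguishing condition applied to the finite support, followed by an $\varepsilon/3$ density argument using the $\ell^1$-contractivity of $\pi_{\Gamma_i}$ and of the conjugacy-class traces --- is precisely the standard proof the authors intend, with the key identity $\tr_{\langle \pi_{\Gamma_i}(\alpha) \rangle}(\pi_{\Gamma_i}(f)) - \tr_{\langle \alpha \rangle}(f) = \sum a_\beta$ over the stray $\beta$ correctly justified by the fact that the quotient map preserves conjugacy.
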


As we will mainly work with integral operators whose associated Schwartz kernels are smooth, let us fix some notation further and restate the above lemma in the context of integral operators. Let $M$ be a closed manifold and $\widetilde M$ be the universal covering space of $M$. Denote the fundamental group $\pi_1(M)$ of $M$ by $\Gamma$. Suppose  $T$ is a $\Gamma$-equivariant bounded smooth function on $\widetilde M\times\widetilde M$, that is, 
$$T(\gamma x,\gamma y)=T(x,y)$$
for all $x, y\in \widetilde M$ and $\gamma \in \Gamma$.
We say that $T$ has finite propagation if there exists a constant $d>0$ such that
$$\dist(x,y)>d\implies T(x,y)=0,$$
where $\dist(x, y)$ is the distance between $x$ and $y$ in $\widetilde M$. In this case, we define the propagation of $T$ to be the infimum of such $d$. 
\begin{definition}\label{def:L1}
A $\Gamma$-equivariant bounded function $T$ on $\widetilde M\times\widetilde M$ is said to be  $\ell^1$-summable if 
\[ \|T\|_{\ell^1} \coloneqq \sup_{x, y\in\mathcal F}\sum_{\gamma \in \Gamma}|T(x,\gamma y)|<\infty,\]
where $\mathcal F$ is a fundamental domain of $\widetilde M $ under the action of $\Gamma$. We shall call $\|T\|_{\ell^1}$ the $\ell^1$-norm of $T$ from now on. 
\end{definition} Clearly, every  $T$ with finite propagation is $\ell^1$-summmable. 

If a $\Gamma$-equivariant bounded smooth function $T\in C^\infty(\widetilde M \times \widetilde M)$ is $\ell^1$-summable, then  it defines a bounded operator on $L^2(\widetilde M)$ by the formula: 
\begin{equation}\label{eq:int}
f\mapsto \int_{\widetilde M}T(x,y)f(y)dy
\end{equation}
for all $ f\in L^2(\widetilde M)$. For notational simplicity, we shall still denote this operator by $T$.

Now suppose that $\Gamma'$ is a finite-index normal subgroup of $\Gamma$. Let $M_{\Gamma'} = \widetilde M/\Gamma'$ be the quotient space of $\widetilde M$ by  the action of $\Gamma'$. In particular, $M_{\Gamma'}$ is a finite-sheeted covering space of $M$ with the deck transformation group being $\Gamma/\Gamma'$. Let $\pi_{\Gamma'}$ be the quotient map from $\widetilde M$ to $M_{\Gamma'}$. Any $\Gamma$-equivariant bounded smooth function $T\in C^\infty(\widetilde M \times \widetilde M)$ that is $\ell^1$-summable naturally descends to  a smooth function $\pi_{\Gamma'}(T)$ on $M_{\Gamma'}\times M_{\Gamma'}$ by the formula: 
$$ \pi_{\Gamma'}(T)(\pi_{\Gamma'}(x),\pi_{\Gamma'}(y)):=\sum_{\gamma\in {\Gamma'}}T(x,\gamma y)$$
for all $(\pi_{\Gamma'}(x),\pi_{\Gamma'}(y))\in M_{\Gamma'}\times M_{\Gamma'}$. 
Clearly, $\pi_{\Gamma'}(T)$ is a $\Gamma/{\Gamma'}$-equivariant smooth function on $M_{\Gamma'}\times M_{\Gamma'}$ and,  similar to the formula in $\eqref{eq:int}$, defines a bounded operator on  $L^2(M_{\Gamma'})$.

For any non-trivial conjugacy class $\langle \alpha\rangle$ of $\Gamma$, we define the following trace map:
$$\tr_{\langle \alpha\rangle}(T)=\sum_{\gamma\in\langle \alpha\rangle}\int_{\mathcal F}T(x,\gamma x)dx,$$
for all $\Gamma$-equivariant $\ell^1$-summable smooth function $T\in C^\infty(\widetilde  M \times \widetilde M)$, where $\mathcal F$ is a fundamental domain of $\widetilde M $ under the action of $\Gamma$.  More generally, for each finite-index normal subgroup ${\Gamma'}$ of $\Gamma$, a similar trace map is defined for $\Gamma/{\Gamma'}$-equivariant smooth functions on $M_{\Gamma'}\times M_{\Gamma'}$.

With the above notation, Lemma $\ref{lemma:limitoftrace}$ can be restated as follows. 
 \begin{lemma}\label{lemma:limitkerneltrace}
 		Suppose $\langle \alpha\rangle$ is a non-trivial conjugacy class of $\Gamma$ and $\{{\Gamma}_i\}$ is a sequence of finite-index normal subgroups that distinguishes $\langle \alpha\rangle$.
 	 Let $T$ be a $\Gamma$-equivariant $\ell^1$-summable bounded smooth function on $\widetilde M\times\widetilde M$. Then we have
 	$$\lim_{i\to\infty}\tr_{\langle\pi_{{\Gamma}_i}(\alpha)\rangle }(\pi_{{\Gamma}_i}(T))=\tr_{\langle \alpha\rangle}(T).$$
 	Moreover, if $T$ has finite propagation, the the limit on the left hand side stabilizes, that is, $ \exists k>0$ such that   $\tr_{\langle\pi_{{\Gamma}_i}(\alpha)\rangle }(\pi_{{\Gamma}_i}(T))=
 	\tr_{\langle \alpha\rangle}(T)$,  for all $ i\geqslant k.$
 \end{lemma}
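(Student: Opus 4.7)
The plan is to reduce the statement to Lemma \ref{lemma:limitoftrace} by slicing $T$ through each point of the fundamental domain. For each $x \in \mathcal F$, I would encode the diagonal values of $T$ as a group algebra element
\begin{equation*}
f_x := \sum_{\gamma\in \Gamma} T(x,\gamma x)\,\delta_\gamma \in \ell^1(\Gamma),
\end{equation*}
whose $\ell^1$-norm is bounded uniformly in $x$ by $\|T\|_{\ell^1}$. The first task is to verify, by a direct unwrapping of definitions, the two identities
\begin{equation*}
\tr_{\langle\alpha\rangle}(T) = \int_{\mathcal F}\tr_{\langle\alpha\rangle}(f_x)\,dx \qquad\text{and}\qquad \tr_{\langle\pi_{\Gamma_i}(\alpha)\rangle}(\pi_{\Gamma_i}(T)) = \int_{\mathcal F}\tr_{\langle\pi_{\Gamma_i}(\alpha)\rangle}(\pi_{\Gamma_i}(f_x))\,dx.
\end{equation*}
For the second identity I would compute $\pi_{\Gamma_i}(T)(\bar x,\bar\gamma\bar x) = \sum_{\gamma'\in \gamma\Gamma_i} T(x,\gamma' x)$ for any lift $\gamma$ of $\bar\gamma$, and observe that summing over $\bar\gamma \in \langle\pi_{\Gamma_i}(\alpha)\rangle$ rearranges into the single sum over $\pi_{\Gamma_i}^{-1}(\langle\pi_{\Gamma_i}(\alpha)\rangle) \subset \Gamma$. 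Exchanging the integral with these absolutely convergent series, justified by the $\ell^1$-summability of $T$, yields both identities.

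With these identities in hand, Lemma \ref{lemma:limitoftrace} applied pointwise in $x$ gives $\tr_{\langle\pi_{\Gamma_i}(\alpha)\rangle}(\pi_{\Gamma_i}(f_x)) \to \tr_{\langle\alpha\rangle}(f_x)$ for each $x \in \mathcal F$. Since the integrands are dominated independently of $i$ and $x$ by $\|f_x\|_{\ell^1} \leqslant \|T\|_{\ell^1}$ and $\mathcal F$ has finite measure, the dominated convergence theorem upgrades pointwise convergence to convergence of the integrals, which is the first assertion.

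For the finite propagation case, if $T$ has propagation $d$, then $T(x,\gamma x) = 0$ unless $\gamma$ lies in the finite set
\begin{equation*}
F_0 := \{\gamma \in \Gamma : \dist(y,\gamma y)\leqslant d \text{ for some } y \in \mathcal F\},
\end{equation*}
which by compactness of $\mathcal F$ is independent of $x$. Applying the hypothesis that $\{\Gamma_i\}$ distinguishes $\langle\alpha\rangle$ to the single finite set $F_0$ produces a uniform index $k$ such that for all $i \geqslant k$ and all $\beta \in F_0 \setminus \langle\alpha\rangle$ one has $\pi_{\Gamma_i}(\beta) \notin \langle\pi_{\Gamma_i}(\alpha)\rangle$. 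Consequently the two expressions $\tr_{\langle\pi_{\Gamma_i}(\alpha)\rangle}(\pi_{\Gamma_i}(f_x))$ and $\tr_{\langle\alpha\rangle}(f_x)$ agree for every $x \in \mathcal F$ once $i \geqslant k$, and integration over $\mathcal F$ finishes the stabilization claim.

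The step requiring the most care is the bookkeeping in the second integral identity: one has to verify cleanly that expanding the trace of $\pi_{\Gamma_i}(T)$ at the conjugacy class $\langle\pi_{\Gamma_i}(\alpha)\rangle$ unfolds simultaneously over the conjugates of $\pi_{\Gamma_i}(\alpha)$ in $\Gamma/\Gamma_i$ and over the fibres of $\pi_{\Gamma_i}$ so as to recover precisely the preimage $\pi_{\Gamma_i}^{-1}(\langle\pi_{\Gamma_i}(\alpha)\rangle) \subset \Gamma$. Once this identification is in place, the rest is standard measure-theoretic reasoning together with the already-proven group-algebra version in Lemma \ref{lemma:limitoftrace}.
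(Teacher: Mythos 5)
Your proposal is correct and is essentially the argument the paper has in mind: the paper presents this lemma as a direct restatement of Lemma \ref{lemma:limitoftrace} in the kernel setting, and your slicing $T$ into the $\ell^1(\Gamma)$-elements $f_x$, verifying that both traces unfold over $\pi_{\Gamma_i}^{-1}(\langle\pi_{\Gamma_i}(\alpha)\rangle)$, and then integrating over $\mathcal F$ with the uniform bound $\|f_x\|_{\ell^1}\leqslant\|T\|_{\ell^1}$ is exactly that reduction, carried out carefully. The finite-propagation stabilization via the single finite set $F_0$ (finite by properness of the action and precompactness of $\mathcal F$) likewise matches the intended use of the distinguishing hypothesis.
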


\subsection{Geometric $C^*$-algebras}
In this subsection, we review the definitions of some geometric $C^*$-algebras, cf.
\cite{Xiepos,Yulocalization} for more details.

Let $X$ be a proper metric space, i.e. every closed ball in $X$ is compact. An $X$-module is a separable Hilbert space equipped with a $*$-representation of $C_0(X)$. An $X$-module is called non-degenerated if the $*$-representation of $C_0(X)$ is non-degenerated. An $X$-module is called standard if no nonzero function in $C_0(X)$ acts as a compact operator.

In addition, we assume that a discrete group $\Gamma$ acts on $X$ properly and cocompactly by isometries. Assume $H_X$ is an $X$-module equipped with a covariant unitary representation of $\Gamma$. If we denote by $\varphi$ and $\pi$ the representations of $C_0(X)$ and $\Gamma$ respectively, this means
$$\pi(\gamma)(\varphi(f)v)=\varphi(\gamma^*f)(\pi(\gamma)v),$$
where $f\in C_0(X),\gamma\in \Gamma,v\in H_X$ and $\gamma^*f(x)=f(\gamma^{-1}x)$. In this case, we call $(H_X,\Gamma,\varphi)$ a covariant system.
\begin{definition}[\cite{YuChar}]
	A covariant system $(H_X,\Gamma,\varphi)$ is called admissible if
	
	\begin{enumerate}
		\item $H_X$ is a non-degenerate and standard $X$-module;
		\item for each $x\in X$, the stabilizer group $\Gamma_x$ acts regularly in the sense that the action is isomorphic to the action of $\Gamma_x$ on $l^2(\Gamma_x)\otimes H$ for some infinite dimensional Hilbert space $H$. Here $\Gamma_x$ acts on $l^2(\Gamma_x)$ by translations and acts on $H$ trivially.
	\end{enumerate}
	
\end{definition}

We remark that for each locally compact metric space $X$ with a proper, cocompact and isometric action of $\Gamma$, an admissible covariant system $(H_X,\Gamma,\varphi)$ always exists. In particular, if $\Gamma$ acts on $X$ freely, then the condition (2) above holds automatically.
\begin{definition}
	Let $(H_X,\Gamma,\varphi)$ be a covariant system and $T$ a $\Gamma$-equivariant bounded linear operator acting on $H_X$.
	\begin{itemize}
		\item The propagation of $T$ is defined to be 
		$$\sup\{d(x,y):(x,y)\in supp(T)\},$$
		where $supp(T)$ is the complement (in $X\times X$) of points $(x,y)\in X\times X$ for which there exists $f,g\in C_0(X)$ such that $gTf=0$ and $f(x)\ne 0,g(y)\ne 0$;
		\item $T$ is said to be locally compact if $fT$ and $Tf$ are compact for all $f\in C_0(X)$.
	\end{itemize}
\end{definition}
\begin{definition}
	Let $X$ be a locally compact metric space with a proper and cocompact isometric action of $\Gamma$. Let $(H_X,\Gamma,\varphi)$ be an admissible covariant system. We denote by $\mathbb C[X]^\Gamma$ the $*$-algebra of all $\Gamma$-equivariant locally compact bounded operators acting on $H_X$ with finite propagations. We define the equivariant Roe algebra $C^*(X)^\Gamma$ to be the completion of $\mathbb C[X]^\Gamma$ under the operator norm.
\end{definition}

Indeed, $C^*(X)^\Gamma$ is isomorphic to $C_r^*(\Gamma)\otimes \mathcal K$, the $C^*$-algebraic tensor product of the reduced group $C^*$-algebra of $\Gamma$ and the algebra of compact operators.
\begin{definition}
	We define the localization algebra $C^*_L(X)^\Gamma$ to be the $C^*$-algebra generated by all uniformly bounded and uniformly norm-continuous function $f:[0,\infty)\to C^\ast(X)^\Gamma$ such that the propagation of $f(t)$ goes to zero as $t$ goes to infinity.
	Define $C^*_{L,0}(X)^\Gamma$ to be the kernel of the evaluation map
	$$\ev:C^*_L(X)^\Gamma\to C^*(X),\ \ev(f)=f(0).$$
\end{definition}

Now let us also review the construction of higher rho invariants for invertible differential operators. For simplicity, let us focus on the odd dimensional case. Suppose $M$ is closed manifold of odd dimension. Let $M_\Gamma$ be the regular covering space of $M$ whose deck transformation group is $\Gamma$.  Suppose  $D$ is a self-adjoint elliptic differential operator on $M$ and $\widetilde D$ is the lift of $D$ to $M_\Gamma$. If $\widetilde D$ is invertible, then its higher rho invariant is defined as follows. 
\begin{definition}
	With the same notation as above, the higher rho invariant $\rho(\widetilde D)$ of an invertible operator $\widetilde D$ is defined to be
	\[ \rho(\widetilde D):=[e^{2\pi i\frac{\chi(\widetilde D/t)+1}{2}}]\in K_1(C^*_{L,0}(M_\Gamma)^\Gamma), \]
	where $\chi$ (called a normalizing function) is a continuous odd function such that $\lim_{x\to \pm \infty} \chi(x) = \pm 1$.
\end{definition} 

\added{By definition, the higher rho invariant $\rho(\widetilde D)$  is a uniformly norm-continuous function from $[0, \infty)$ to $C^\ast(X)^\Gamma$. It is a secondary invariant that serves as an obstruction for the higher index of $\widetilde D$ to be both trivial and local (i.e. having small propagation) at the same time, cf. \cite{MR4051922}. More precisely, for each fixed $t$, the unitary $e^{2\pi i\frac{\chi(\widetilde D/t)+1}{2}}$ is a representative of the higher index class of $\widetilde D$.  On one hand, since $\widetilde D$ is invertible, $\widetilde D$ has a spectral gap near zero. It follows that  $e^{2\pi i\frac{\chi(\widetilde D/t)+1}{2}}$ converges in norm to the trivial unitary $1$, as $t$ goes to zero.}   On the other hand, the propagation of $e^{2\pi i\frac{\chi(\widetilde D/t)+1}{2}}$ goes to zero (up to operators with small norm)\footnote{To be precise, one needs to use a normalizing function $\chi$ whose distributional Fourier transform has compact support, and furthermore approximate the function $e^{2\pi i x}$ by an appropriate polynomial. }, \added{ as $t$ goes to infinity. For an invertible $\widetilde D$, we can choose a representative of the higher index of $\widetilde D$ to be either trivial or local (i.e. having small propagation), but generally not both at the same time. In other words, the higher rho invariant measures the tension between the triviality and locality of the higher index of an invertible operator}

The above discussion has an obvious maximal analogue (cf. \cite[Lemma 3.4]{MR2431253}).
\begin{definition}
	For an operator $T\in\mathbb{C}[X]^\Gamma$, its \emph{maximal norm} is
	\[\|T\|_{\textnormal{max}}\coloneqq\sup_{\varphi}\left\{\|\varphi(T) \| : \varphi\colon \mathbb{C}[X]^\Gamma\rightarrow\mathcal{B}(H)\textrm{ is a  $*$-representation}\right\}.\]
	The maximal equivariant Roe algebra $C^*_{\max}(X)^\Gamma$ is defined to be the completion of $\mathbb{C}[X]^\Gamma$ with respect to $\|\cdot\|_{\textnormal{max}}$.
	Similarly, we define 
	\begin{enumerate}
		\item   the maximal localization algebra $C^*_{L, \max}(X)^\Gamma$ to be the $C^*$-algebra generated by all uniformly bounded and uniformly norm-continuous function $f:[0,\infty)\to C^\ast_{\max}(X)^\Gamma$ such that the propagation of $f(t)$ goes to zero as $t$ goes to infinity.
		\item and  $C^*_{L,0, \max}(X)^\Gamma$ to be the kernel of the evaluation map
		$$\ev:C^*_{L, \max}(X)^\Gamma\to C^*_{\max}(X),\ \ev(f)=f(0).$$		
	\end{enumerate}	
\end{definition}
Now suppose $M$ is a closed spin manifold. Assume that $M$ is endowed with a Riemannian  metric $g$ of positive scalar curvature. Let $M_\Gamma$ be the regular covering space of $M$ whose deck transformation group is $\Gamma$.  Suppose $D$ is the associated Dirac operator on $M$ and $\widetilde D$ is the lift of $D$ to $M_\Gamma$. In this case, we can define the maximal higher rho invariant of $\widetilde D$ as follows. 

\begin{definition}
	The maximal higher rho invariant $\rho_{\max}(\widetilde D)$ of $\widetilde D$ is defined to be
	\[ \rho_{\max}(\widetilde D):=[e^{2\pi i\frac{\chi(\widetilde D/t)+1}{2}}]\in K_1(C^*_{L,0, \max}(M_\Gamma)^\Gamma), \]
	Here $\chi$ is again a normalizing function, but the functional calculus for defining $\chi(t^{-1}\widetilde D)$ is performed under the maximal norm instead. See for example \cite[Section 3]{Guo:2019aa} for a discussion of such a functional calculus.
\end{definition}

\section{Delocalized eta invariants and their  approximations}\label{sec:deleta}
In this section, we review the definition of delocalized eta invariants and formulate the main question of this article.

We assume that  $M$ is a closed spin manifold equipped with a Riemannian metric of positive scalar curvature throughout the paper. Let $\Gamma$ be a finitely generated discrete group and $\widetilde M$ a $\Gamma$-regular covering space of $M$. Suppose $D$ is the associated Dirac operator on $M$ and $\widetilde D$ is the lift of $D$ to $\widetilde M$. 

Positive scalar curvature implies $\widetilde D$ has a spectral gap. In fact, the majority of results\footnote{such as Theorem $\ref{thm:A}$, Theorem $\ref{thm:B}$, Theorem $\ref{thm:conjcontrol}$ and Proposition $\ref{prop:sep}$ } in this paper also hold true under the assumption that   $\widetilde D$ has a spectral gap or  a sufficiently large spectral gap. For simplicity, we shall only  discuss the case where $M$ is a closed spin manifold equipped with a Riemannian metric of positive scalar curvature.

\begin{definition}[{\cite{Lott}}]\label{def:deloceta}
	For any conjugacy class $\left\langle\alpha\right\rangle$ of $\Gamma$, Lott's \emph{delocalized eta invariant} 	$\eta_{\left\langle\alpha\right\rangle }(\widetilde D)$ of $\widetilde D$ is defined to be
	\begin{equation}\label{delocalizedeta}
	\eta_{\left\langle\alpha\right\rangle }(\widetilde D)\coloneqq 
	\frac{2}{\sqrt\pi}\int_{0}^\infty \tr_{\left\langle\alpha\right\rangle }(\widetilde De^{-t^2\widetilde D^2})dt
	\end{equation}
	whenever the integral converges. Here
	$$\tr_{\left\langle\alpha\right\rangle }(\widetilde De^{-t^2\widetilde D^2})=
	\sum_{\gamma\in\langle\alpha\rangle}\int_\mathcal F \tr(k_t(x,\gamma x))dx,$$
where $k_t(x, y)$ is the corresponding Schwartz kernel of the operator $\widetilde De^{-t^2\widetilde D^2}$ and $\mathcal F$ is a fundamental domain of $\widetilde M$ under the action of $\Gamma$. 
\end{definition}

It is known that the integral formula $\eqref{delocalizedeta}$ for $\eta_{\left\langle\alpha\right\rangle }(\widetilde D)$ converges if $\widetilde D$ is invertible and any one of the following conditions is satisfied.
\begin{enumerate}
	\item The scalar curvature of $M$ is sufficiently large (see \cite[Definition 3.2]{CWXY} for the precise definition of ``sufficiently large").
	\item There exists a smooth dense subalgebra of $C^*_r(\Gamma)$ onto which the trace map $\tr_{\langle\alpha\rangle }$ extends continuously (cf. \cite[Section 4]{Lott}). For example, when $\Gamma$ is a  Gromov's hyperbolic group, Puschnigg's smooth dense subalgebra  \cite{Puschnigg} is such an subalgebra which admits a continuous extension of the trace map $\tr_{\langle\alpha\rangle}$ for all conjugacy classes $\langle h \rangle$.
	\item $\langle\alpha\rangle$ has subexponential growth (cf. \cite[Corollary 3.4]{CWXY}).
\end{enumerate}
In general, it is still an open question when the integral in $\eqref{delocalizedeta}$ converges for  invertible operators.

Now suppose that $\Gamma'$ is a finite-index normal subgroup of $\Gamma$. As before, let $M_{\Gamma'} = \widetilde M/\Gamma'$ be the associated finite-sheeted covering space of $M$. Similarly, let $D_{\Gamma'}$ be the lift of $D$ to $M_{\Gamma'}$, and define the delocalized eta invariant  $\eta_{\langle \pi_{\Gamma'}(\alpha)\rangle }(D_{\Gamma'})$ of $D_{\Gamma'}$ to be 
\begin{equation}\label{eq:fineta}
\eta_{\langle \pi_{\Gamma'}(\alpha)\rangle }(D_{\Gamma'})\coloneqq 
\frac{2}{\sqrt\pi}\int_{0}^\infty \tr_{\langle\pi_{\Gamma'}(\alpha)\rangle }(D_{\Gamma'}e^{-t^2 D_{\Gamma'}^2})dt,
\end{equation}
where $\alpha\in \Gamma$ and $\langle \pi_{\Gamma'}(\alpha)\rangle $ is conjugacy class of $\pi_{\Gamma'}(\alpha)$ in $\Gamma/\Gamma'$. As $M_{\Gamma'}$ is compact, it is not difficult to verify that the integral in $\eqref{eq:fineta}$ always converges absolutely. 

The above discussion naturally leads to the following questions.

\begin{question}\label{question}
Given a non-identity element $\alpha \in \Gamma$, suppose  $\{\Gamma_i\}$ is a sequence of finite-index normal subgroups that distinguishes the conjugacy class $\langle\alpha\rangle$. 
	\begin{enumerate}[label=(\Roman*), ref=(\Roman*)]
		\item \label{question1} When does $\displaystyle  \lim_{i\to\infty}\eta_{\langle \pi_{\Gamma_i}(\alpha)\rangle }(D_{\Gamma_i})$
		exist?
		\item \label{question2} If  $\eta_{\left\langle\alpha\right\rangle }(\widetilde D)$ is well-defined and $\displaystyle  \lim_{i\to\infty}\eta_{\langle \pi_{\Gamma_i}(\alpha)\rangle }(D_{\Gamma_i})$ exists, when do we have 
		\begin{equation}\label{eq:appr}
		\lim_{i\to\infty}\eta_{\langle \pi_{\Gamma_i}(\alpha)\rangle }(D_{\Gamma_i})=\eta_{\left\langle\alpha\right\rangle }(\widetilde D)?
		\end{equation}
	\end{enumerate} 
\end{question}

\section{Maximal higher rho invariants and their functoriality}\label{sec:max}
In this section, we use the functoriality of  higher rho invariants  to give some sufficient conditions under which the answer to part (I) of Question $\ref{question}$ is positive. 

Before we get into the technical details, here is a special case which showcases the main results of this section. 
\begin{proposition}\label{prop:atmenable}
With the same notation as in Question $\ref{question}$, if $\Gamma$ is a-T-menable and $\{\Gamma_i\}$ is a sequence of finite-index normal subgroups that distinguishes the conjugacy class $\langle\alpha\rangle$ for a non-identity element $\alpha\in \Gamma$, then the limit 
\[ \displaystyle  \lim_{i\to\infty}\eta_{\langle \pi_{\Gamma_i}(\alpha)\rangle }(D_{\Gamma_i}) \]
stabilizes, that is, $ \exists k>0$ such that   $\eta_{\langle\pi_{\Gamma_i}(\alpha)\rangle}(D_{\Gamma_i}) = \eta_{\langle\pi_{\Gamma_k}(\alpha)\rangle}(D_{\Gamma_k})
$,  for all $ i\geqslant k.$ In particular, $\displaystyle  \lim_{i\to\infty}\eta_{\langle \pi_{\Gamma_i}(\alpha)\rangle }(D_{\Gamma_i}) $ exists. 
\end{proposition}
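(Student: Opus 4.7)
The plan is to express each finite-level delocalized eta invariant as the pairing of the maximal higher rho invariant $\rho_{\max}(\widetilde D)$ with a pulled-back delocalized trace cocycle, and then to exploit a-T-menability (via Higson-Kasparov) to find a finite-propagation representative of the rho class, at which point Lemma \ref{lemma:limitkerneltrace} closes the argument.

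First I would note that, since the given metric on $M$ has positive scalar curvature, the lifted Dirac operator $\widetilde D$ is invertible, so the maximal higher rho invariant $\rho_{\max}(\widetilde D)\in K_1(C^*_{L,0,\max}(M_\Gamma)^\Gamma)$ is well-defined. At each finite level the rho invariant $\rho(D_{\Gamma_i})\in K_1(C^*_{L,0}(M_{\Gamma_i})^{\Gamma/\Gamma_i})$ is also defined (there is no distinction between maximal and reduced since $\Gamma/\Gamma_i$ is finite), and the group quotient $\pi_{\Gamma_i}$ induces a $\ast$-homomorphism $(\pi_{\Gamma_i})_\ast\colon C^*_{L,0,\max}(M_\Gamma)^\Gamma\to C^*_{L,0}(M_{\Gamma_i})^{\Gamma/\Gamma_i}$ that sends $\rho_{\max}(\widetilde D)$ to $\rho(D_{\Gamma_i})$. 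A standard heat-kernel computation identifies the delocalized eta invariant on the finite cover with the pairing of $\rho(D_{\Gamma_i})$ with the delocalized trace cocycle associated to $\tr_{\langle\pi_{\Gamma_i}(\alpha)\rangle}$, so by functoriality one obtains
$$\eta_{\langle\pi_{\Gamma_i}(\alpha)\rangle}(D_{\Gamma_i})=\bigl\langle \rho_{\max}(\widetilde D),\ (\pi_{\Gamma_i})^\ast\tau_{\langle\pi_{\Gamma_i}(\alpha)\rangle}\bigr\rangle,$$
reducing the problem to stabilization of the sequence of pairings on the right-hand side.

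Next I would invoke Higson-Kasparov: a-T-menability of $\Gamma$ implies that the maximal Baum-Connes assembly map is an isomorphism. Combined with the Higson-Roe style long exact sequence for maximal localization algebras, this allows one to represent $\rho_{\max}(\widetilde D)$ by a $K$-theory class that admits a representative with \emph{finite propagation}, i.e.\ with entries in $\mathbb C[M_\Gamma]^\Gamma$. Once such a representative $u$ is secured, Lemma \ref{lemma:limitkerneltrace}, applied entrywise to the $\Gamma$-equivariant finite-propagation Schwartz kernels making up $u$, shows that $\bigl\langle u,\ (\pi_{\Gamma_i})^\ast\tau_{\langle\pi_{\Gamma_i}(\alpha)\rangle}\bigr\rangle$ is \emph{eventually} equal to $\langle u,\tau_{\langle\alpha\rangle}\rangle$, which delivers the stabilization.

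The main obstacle is the extraction of a finite-propagation representative of the rho class. Indeed, elements of the localization algebra are by definition only paths of operators whose propagation decays at infinity, so producing a genuinely finite-propagation representative whose delocalized trace pairings agree with those of $\rho_{\max}(\widetilde D)$ requires a careful geometric model of the maximal Baum-Connes assembly map and a mapping-cone argument for the Higson-Roe sequence. The remaining steps, namely the functoriality of the maximal higher rho invariant and the trace-level stabilization via Lemma \ref{lemma:limitkerneltrace}, are more routine in comparison.
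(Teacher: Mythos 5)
Your overall strategy is the one the paper follows (Proposition \ref{prop:atmenable} is deduced from Theorem \ref{prop:max} plus Higson--Kasparov), but the step you yourself identify as the ``main obstacle'' is where the argument as you state it would fail, and the fix is not a finite-propagation representative of the rho class. A class in $K_1(C^*_{L,0,\max}(E\Gamma)^\Gamma)$ is a path of unitaries whose propagation must tend to zero at infinity and which must start at $1$; there is no meaningful ``finite-propagation representative'' of $\rho_{\max}(\widetilde D)$ to which one could apply Lemma \ref{lemma:limitkerneltrace} entrywise. More importantly, the pairing of such a $K_1$-class with the delocalized trace $\tr_{\langle\pi_{\Gamma_i}(\alpha)\rangle}$ is a determinant-type invariant $\tau_i$ (a winding-number/eta-type integral along the path, cf.\ \cite{Xie}), not a trace of kernels, so ``applying Lemma \ref{lemma:limitkerneltrace} entrywise to the kernels making up $u$'' does not compute $\tau_i(\rho(D_{\Gamma_i}))$.

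What the paper actually does is shift the finite-propagation requirement from the rho class to its \emph{preimage under the boundary map}. Rational surjectivity of the maximal assembly map makes $\partial\colon K_0(C^*_{\max}(\Gamma))\otimes\mathbb Q\to K_1(C^*_{L,0,\max}(E\Gamma)^\Gamma)\otimes\mathbb Q$ surjective, so $\lambda\cdot\rho_{\max}(\widetilde D)=\partial[p]$ for some $\lambda\in\mathbb Q$; the Baum--Douglas model realizes $[p]$ rationally as the higher index of a twisted $\mathrm{spin^c}$ Dirac operator, and the index construction with a normalizing function of compactly supported Fourier transform yields a representative that is a formal difference of finite-propagation idempotents in $\mathcal S\otimes\mathbb C\Gamma$. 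The key identity from \cite{Xie} is then $\tfrac12\eta_{\langle\pi_{\Gamma_i}(\alpha)\rangle}(D_{\Gamma_i})=-\tau_i(\rho(D_{\Gamma_i}))=\tr_{\langle\pi_{\Gamma_i}(\alpha)\rangle}(p_i)$ with $p_i=(\pi_{\Gamma_i})_\ast(p)$, where functoriality of the maximal rho invariant and the commuting ladder of long exact sequences guarantee $\partial(p_i)=\rho(D_{\Gamma_i})$. Only at this point, with an honest idempotent of finite propagation in hand, does Lemma \ref{lemma:limitoftrace} apply and give stabilization of $\tr_{\langle\pi_{\Gamma_i}(\alpha)\rangle}(p_i)$. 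So your outline is recoverable, but the missing idea is precisely the reduction, via $\partial$ and the determinant map, of the eta pairing to a trace of a finite-propagation idempotent upstairs; without it the last step has nothing to act on.
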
 
\begin{proof}
	This is a consequence of Theorem $\ref{prop:max}$ below and a theorem of Higson and Kasparov \cite[Theorem 1.1]{MR1821144}.
\end{proof}

Given a finitely presented discrete group $\Gamma$, let $\underline{E}\Gamma$ be the universal $\Gamma$-space for proper $\Gamma$-actions. The Baum-Connes conjecture \cite{PBAC88} can be stated as follows. 

\begin{conjecture}[Baum-Connes conjecture]\label{conj:bc}
The following map
$$\ev_*\colon  K_i(C^*_L(\underline{E}\Gamma)^\Gamma)
\to K_i(C^*(\underline{E}\Gamma)^\Gamma)$$
is an isomorphism. Here
$$K_i(C^*_L(\underline{E}\Gamma)^\Gamma)\coloneqq \varinjlim_{Y} K_i(C^*_L(Y)^\Gamma)$$
and 
$$K_i(C^*(\underline{E}\Gamma)^\Gamma)\coloneqq \varinjlim_{Y} K_i(C^*(Y)^\Gamma),$$
where the limit is taken over all $\Gamma$ cocompact spaces $Y$.
\end{conjecture}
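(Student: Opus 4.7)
The statement is the Baum--Connes conjecture, which remains open in full generality; a proposal can therefore only sketch the strategy that has succeeded in the known cases. The plan is to follow the \emph{Dirac--dual-Dirac method} pioneered by Kasparov. One constructs a \emph{Dirac element} $d \in KK^\Gamma(A, \mathbb C)$ for a suitable proper $\Gamma$-$C^\ast$-algebra $A$ built from the geometry of $\underline E\Gamma$, together with a \emph{dual Dirac element} $\eta \in KK^\Gamma(\mathbb C, A)$, and then shows that the Kasparov product $\gamma := \eta \otimes_A d \in KK^\Gamma(\mathbb C, \mathbb C)$ equals the unit $1_{\mathbb C}$. Once this ``$\gamma = 1$'' identity is established, a formal argument using Kasparov's descent functor produces a two-sided inverse of $\ev_\ast$, so $\ev_\ast$ is an isomorphism.

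Operationally, I would proceed in three steps. First, identify the domain $K_i(C^\ast_L(\underline E\Gamma)^\Gamma)$ with the equivariant $K$-homology of $\underline E\Gamma$ via the local index isomorphism $K_i(C^\ast_L(Y)^\Gamma) \cong KK^\Gamma_i(C_0(Y), \mathbb C)$ for cocompact $Y \subset \underline E\Gamma$, so that $\ev_\ast$ is identified with the classical Baum--Connes assembly map. Second, produce geometric representatives of $d$ and $\eta$ tailored to the class of groups at hand: for a-T-menable groups one uses an affine isometric action on a Hilbert space together with an infinite-dimensional Bott operator, following Higson--Kasparov; for hyperbolic or CAT(0)-cubical groups one uses a boundary action or a wall structure, following Lafforgue and Mineyev--Yu. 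Third, verify $\gamma = 1$ by exhibiting an equivariant homotopy between the Kasparov product and the identity, typically via a rotation trick in two-variable KK-theory or by interpolating along the affine or CAT(0) structure used to build $\eta$.

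The hard part, and the reason the conjecture is open in general, is precisely the last step. For groups with Kazhdan's property (T), the $\gamma$-element in ordinary equivariant KK-theory cannot equal $1$, so the method as stated must fail, and one has to pass either to a Banach KK-category (as in Lafforgue's work) or to the maximal assembly map, where the known counterexamples to the reduced Baum--Connes conjecture (Higson--Lafforgue--Skandalis) do not directly apply. For the present paper this obstacle is side-stepped entirely: Conjecture~\ref{conj:bc} is invoked only for classes of groups in which it is already known, notably a-T-menable groups by \cite{MR1821144}, and no new case of the conjecture itself is established within the paper.
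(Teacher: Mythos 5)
The statement you were asked about is the Baum--Connes conjecture itself, and the paper offers no proof of it --- it is stated as Conjecture~\ref{conj:bc} and only ever \emph{invoked} in cases where it is known (notably for a-T-menable groups via Higson--Kasparov, and in maximal or rational forms as hypotheses in Theorem~\ref{thm:A} and Theorem~\ref{prop:max}). Your proposal correctly recognizes this, and your survey of the Dirac--dual-Dirac strategy, the identification of the domain $K_i(C^*_L(\underline{E}\Gamma)^\Gamma)$ with $K_i^\Gamma(\underline{E}\Gamma)$ via the localization-algebra picture (which is exactly the identification the paper uses to match $\ev_\ast$ with the classical assembly map $\mu$), and the property~(T) obstruction to $\gamma=1$ are all accurate. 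So there is no gap to flag relative to the paper: neither you nor the authors prove the conjecture, and your honest framing --- that the paper uses it only as a hypothesis or in established cases --- matches how it functions in the text. The one caveat is simply that what you wrote is a strategy outline, not a proof, and should not be presented as establishing the statement.
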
  Although this was not how the Baum-Connes conjecture was originally stated, the above formulation is equivalent to the original Baum-Connes conjecture, after one makes the following natural identifications: 
\[ K_i(C^*_L(\underline{E}\Gamma)^\Gamma)\cong K_i^\Gamma(\underline{E}\Gamma) \textup{ and } 
 K_i(C^*(\underline{E}\Gamma)^\Gamma)\cong K_i(C_r^*(\Gamma)). \]
Under this notation, we usually write the map \[ \ev_\ast\colon K_i(C^*_L(\underline{E}\Gamma)^\Gamma)\to K_i(C^*(\underline{E}\Gamma)^\Gamma\] as follows:
 \[ \mu\colon  K_i^\Gamma(\underline{E}\Gamma)\to K_i(C_r^*(\Gamma))  \]
and call it the Baum-Connes assembly map. 
Similarly,  there is a maximal version of the Baum-Connes assembly map: 
 \[ \mu_{\max} \colon  K_i^\Gamma(\underline{E}\Gamma)\to K_i(C_{\max}^*(\Gamma)). \]
The maximal Baum-Connes assembly map $\mu_{\max}$ is not an isomorphism in general. For example, $\mu_{\max}$ fails to be surjective for non-finite property (T) groups.

Before we discuss the functoriality of higher rho invariants, let us recall the functoriality of higher indices.  More precisely, let $D$ be a Dirac-type operator on a closed $n$-dimensional manifold $X$. Consider the following commutative diagram
\[ \xymatrixrowsep{1pc} \xymatrix{ & B\Gamma_1 \ar[dd]^{B\varphi} \\
	X \ar[dr]_{f_2}  \ar[ur]^{f_1} & \\
	& B\Gamma_2 }\]
where $f_1$, $f_2$ are continuous maps and $B\varphi$ is a continuous map from $B\Gamma_1$ to $B\Gamma_1$ induced by a group homomorphism $\varphi\colon \Gamma_1\to \Gamma_2$.  Let $X_{\Gamma_1}$ (resp. $X_{\Gamma_2}$) be the $\Gamma_1$ (resp. $\Gamma_2$) regular covering  space of $X$ induced by the map $f_1$ (resp. $f_2$), and $D_{X_{\Gamma_1}}$ (resp. $D_{X_{\Gamma_2}}$) be the lift of $D$ to $X_{\Gamma_1}$ (resp. $X_{\Gamma_2}$). We have the following functoriality of the higher indices: 
\[  \varphi_\ast(\ind_{\max}(D_{X_{\Gamma_1}}))  = \ind_{\max}(D_{X_{\Gamma_2}}) \textup{ in }  K_n(C_{\max}^\ast(\Gamma_2)),  \]
where $C_{\max}^\ast(\Gamma_i)$ is the maximal group $C^\ast$-algebra of $\Gamma_i$, the notation $\ind_{\max}$ stands for higher index in the maximal group $C^\ast$-algebra, and $\varphi_\ast\colon  K_n(C_{\max}^\ast(\Gamma_1))\to  K_n(C_{\max}^\ast(\Gamma_2))$ is the morphism naturally induced by $\varphi$.

Now let us consider the functoriality of higher rho invariants. Following the same notation from above, in addition, assume $X$ is  a closed spin manifold endowed with a Riemannian  metric of positive scalar curvature. In this case, the maximal higher rho invariants $\rho_{\max}(D_{X_{\Gamma_1}})$  of  $D_{X_{\Gamma_2}}$ and  $\rho_{\max}(D_{X_{\Gamma_1}})$ of $D_{X_{\Gamma_2}}$ are defined. Let  $E\Gamma_1$ (resp. $E\Gamma_2$) be universal $\Gamma_1$-space (resp. $\Gamma_2$-space ) for free $\Gamma_1$-actions (resp. $\Gamma_2$-actions). Denote by $\Phi$ the  equivariant map $ X_{\Gamma_1}\to X_{\Gamma_2}$ induced by $\varphi\colon \Gamma_1 \to \Gamma_2$, which in turn induces a morphism  \[ \Phi_\ast \colon  K_n(C_{L,0, \max}^\ast(X_{\Gamma_1})^{\Gamma_1}) \to K_n(C_{L,0, \max}^\ast(X_{\Gamma_2})^{\Gamma_2}).\] 
By \cite{guoxieyu}, 
the maximal higher rho invariants are functorial: 
\[ \Phi_\ast(\rho_{\max}(D_{X_{\Gamma_1}})) = \rho_{\max}(D_{X_{\Gamma_2}})\]
in $K_n(C_{L,0, \max}^\ast(X_{\Gamma_2})^{\Gamma_2})$.

Now suppose $M$ is an odd-dimensional closed spin manifold endowed with a positive scalar curvature metric and $\Gamma$ is a finitely generated discrete group. Let $\widetilde M$ be a $\Gamma$-regular covering space of $M$ and $\widetilde D$ be the Dirac operator lifted from $M$. For each finite-index normal subgroup $\Gamma'$ of $\Gamma$,  let $M_{\Gamma'} = \widetilde M/\Gamma'$ be the associated finite-sheeted covering space of $M$. Denote by $D_{\Gamma'}$ the Dirac opeartor on $M_{\Gamma'}$ lifted from $M$. 

\begin{theorem}\label{prop:max}
With the above notation, given a non-identity element $\alpha \in \Gamma$, suppose  $\{\Gamma_i\}$ is a sequence of finite-index normal subgroups that distinguishes the conjugacy class $\langle\alpha\rangle$.  If the maximal Baum-Connes assembly map for $\Gamma$ is rationally an isomorphism,  then
	\[ \lim_{i\to\infty}\eta_{\langle\pi_{\Gamma_i}(\alpha)\rangle}(D_{\Gamma_i}) \] stabilizes, that is, $ \exists k>0$ such that   $\eta_{\langle\pi_{\Gamma_i}(\alpha)\rangle}(D_{\Gamma_i}) = \eta_{\langle\pi_{\Gamma_k}(\alpha)\rangle}(D_{\Gamma_k})
	$,  for all $ i\geqslant k.$
\end{theorem}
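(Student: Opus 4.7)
My plan centers on four ingredients: the functoriality of maximal higher rho invariants from \cite{guoxieyu}, the identification of delocalized eta invariants with $K$-theoretic trace pairings, the Higson-Roe six-term exact sequence, and the finite-propagation stabilization of Lemma \ref{lemma:limitkerneltrace}.

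First, I would observe that for each finite-index normal subgroup $\Gamma'$, because $\Gamma/\Gamma'$ is finite, the trace $\tr_{\langle\pi_{\Gamma'}(\alpha)\rangle}$ extends continuously to $C^*_{\max}(M_{\Gamma'})^{\Gamma/\Gamma'}$ and induces a trace pairing $\tau_{\langle\pi_{\Gamma'}(\alpha)\rangle}$ on $K_1(C^*_{L,0,\max}(M_{\Gamma'})^{\Gamma/\Gamma'})$ satisfying $\tau_{\langle\pi_{\Gamma'}(\alpha)\rangle}(\rho_{\max}(D_{\Gamma'})) = \eta_{\langle\pi_{\Gamma'}(\alpha)\rangle}(D_{\Gamma'})$. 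Combined with the functoriality $(\pi_{\Gamma_i})_*\rho_{\max}(\widetilde D) = \rho_{\max}(D_{\Gamma_i})$ of \cite{guoxieyu}, this reduces the theorem to stabilizing the sequence $\tau_{\langle\pi_{\Gamma_i}(\alpha)\rangle}((\pi_{\Gamma_i})_*\rho_{\max}(\widetilde D))$.

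Second, I would exploit the Higson-Roe six-term exact sequence
\[ K_0(C^*_{\max}(\Gamma))\xrightarrow{\partial} K_1(C^*_{L,0,\max}(\widetilde M)^\Gamma)\to K_1(C^*_{L,\max}(\widetilde M)^\Gamma)\xrightarrow{\ev_*} K_1(C^*_{\max}(\Gamma)) \]
(using the standard identification $K_\bullet(C^*_{\max}(\widetilde M)^\Gamma)\cong K_\bullet(C^*_{\max}(\Gamma))$) to locate $\rho_{\max}(\widetilde D)$ in the image of the boundary map. Since the higher index of $\widetilde D$ vanishes by positive scalar curvature, and the rational injectivity of $\mu_{\max}$ (implied by its being rationally an isomorphism) forces the corresponding $K$-homology class to be rationally zero, the image of $\rho_{\max}(\widetilde D)$ in $K_1(C^*_{L,\max}(\widetilde M)^\Gamma)$ is rationally zero. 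By exactness there exist an integer $m>0$ and $y\in K_0(C^*_{\max}(\Gamma))$ with $m\,\rho_{\max}(\widetilde D)=\partial(y)$.

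Third, I would use the rational surjectivity of $\mu_{\max}$ to represent a further multiple of $y$ by geometric assembly data: namely, by a difference of projections in matrices over $\mathbb C\Gamma$, or more generally over a smooth, finite-propagation dense subalgebra of $C^*_{\max}(\Gamma)$. The standard exponential formula for $\partial$ then yields a finite-propagation representative of the corresponding multiple of $\rho_{\max}(\widetilde D)$ in $C^*_{L,0,\max}(\widetilde M)^\Gamma$, and pushing this representative forward by $\pi_{\Gamma_i}$ falls squarely within the ``moreover'' statement of Lemma \ref{lemma:limitkerneltrace}. Dividing by the accumulated integer multiplier yields that $\eta_{\langle\pi_{\Gamma_i}(\alpha)\rangle}(D_{\Gamma_i})$ is eventually constant in $i$. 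The hardest step will be this third one: the passage from an abstract class $y\in K_0(C^*_{\max}(\Gamma))$ to a concrete representative whose boundary has finite propagation really uses the Baum-Connes picture to access group-algebra level data, and one must then control the error from approximating a smooth dense subalgebra projection by a finite-propagation truncation — most conveniently via the bound $|\tr_{\langle\alpha\rangle}(T)|\le\|T\|_{\ell^1}$ of Definition \ref{def:L1}.
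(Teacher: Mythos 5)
Your proposal is correct and follows essentially the same path as the paper's proof: identifying $\eta_{\langle\pi_{\Gamma_i}(\alpha)\rangle}(D_{\Gamma_i})$ with a determinant/trace pairing against $\rho_{\max}(D_{\Gamma_i})$, using functoriality of maximal rho invariants to pull everything back to a single class $\rho_{\max}(\widetilde D)$, invoking the rational Baum--Connes isomorphism together with the six-term exact sequence to realize (a rational multiple of) $\rho_{\max}(\widetilde D)$ as $\partial[p]$ for a finite-propagation idempotent $p$ in $\mathcal{S}\otimes\mathbb{C}\Gamma$, and then applying Lemma \ref{lemma:limitkerneltrace} to stabilize the traces $\tr_{\langle\pi_{\Gamma_i}(\alpha)\rangle}(p_i)$. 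Your route to showing $\rho_{\max}(\widetilde D)\in\mathrm{Im}(\partial)$ via the vanishing of the higher index plus rational injectivity of $\mu_{\max}$ is equivalent to the paper's observation that injectivity of $\mu_1$ forces $\partial$ to be surjective; the only point you pass over somewhat quickly is the commuting diagram of exact sequences for $\Gamma$ and $\Gamma/\Gamma_i$ (the paper's diagram \eqref{cd:longexact2}), which is what actually certifies $\partial(\pi_{\Gamma_i*}[p])=\rho(D_{\Gamma_i})$ before the trace pairing can be applied.
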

\begin{proof}
We have the short exact sequence of $C^\ast$-algebras: 
	\[ 
	0\to C^*_{L,0, \max}(E\Gamma)^\Gamma \to C^*_{L, \max}(E\Gamma)^\Gamma \to C^*_{\max}(E\Gamma)^\Gamma \to 0\]
	which induces the following long exact sequence in $K$-theory:
	\begin{equation}\label{cd:longexact}
\scalebox{0.82}{ $\begin{CD}
	K_0(C^*_{L,0, \max}(E\Gamma)^\Gamma)\otimes\mathbb Q @>>>K_0(C^*_{L,\max }(E\Gamma)^\Gamma)\otimes\mathbb Q@>{\mu_0}>>K_0(C_{\max}^*(\Gamma))\otimes\mathbb Q\\
	@AAA@.  @VV\partial V \\
	K_1(C^*_{\max}(\Gamma))\otimes\mathbb Q @<{\mu_1}<<K_1(C^*_{L,\max }(E\Gamma)^\Gamma)\otimes\mathbb Q@<<<K_1(C^*_{L,0,\max }(E\Gamma)^\Gamma)\otimes\mathbb Q
	\end{CD} $}
	\end{equation} 
Note that $K_i(C^*_{L, \max}(E\Gamma)^\Gamma)$ is naturally isomorphic to  $K_i^\Gamma(E\Gamma)$. Similarly, we have $K_i(C^*_{L, \max}(\underline{E}\Gamma)^\Gamma)\cong K_i^\Gamma(\underline{E}\Gamma).$ The morphism $K_i^\Gamma(E\Gamma)\to K_i^\Gamma(\underline{E}\Gamma)$ induced by the  inclusion from $E\Gamma$ to $\underline{E}\Gamma$ is rationally injective (cf: \cite[Section 7]{BaumConnesHigson}). It follows that if the rational maximal Baum-Connes conjecture holds for $\Gamma$, that is, the maximal Baum-Connes assembly map $\mu_{\max}\colon  K_i(C_{L, \max}^\ast(\underline{E}\Gamma)^\Gamma)\otimes \mathbb 
	Q \to K_i(C_{\max}^\ast(\Gamma))\otimes \mathbb Q$ is an isomorphism, then the maps $\mu_i$ in the above commutative diagram are injective and the map $\partial $ is surjective. In particular, for the higher rho invariant $\rho(\widetilde D) $ of $\widetilde D$, there exists \[ [p]\in K_0(C^*_{\max}(E \Gamma)^\Gamma)\cong K_0(C^*_{\max}(\Gamma))\] such that $\partial [p]=\rho(\widetilde D)$ rationally, that is,  $\partial [p]=\lambda \cdot \rho(\widetilde D)$ for some $\lambda \in \mathbb Q$. 
	
	By the surjectivity of the Baum-Connes assembly map  
	\[ \mu_{\max}\colon  K_i(C_{L, \max}^\ast(\underline{E}\Gamma)^\Gamma)\otimes \mathbb 
	Q \to K_i(C_{\max}^\ast(\Gamma))\otimes \mathbb Q,\] we can assume $p$ is an idempotent  with finite propagation in $\mathcal K \otimes \mathbb C\Gamma$. 
\added{Indeed, let $[q]$ be an element in $ K_0(C_{L, \max}^\ast(\underline{E}\Gamma)^\Gamma)\otimes \mathbb 
		Q \cong K_0^\Gamma(\underline E\Gamma) \otimes \mathbb Q$  such that $\mu_{\max}([q])=[p]$.	It follows from the Baum--Douglas model of K-homology \cite{MR679698} that $[q]$ is the K-homology class of a twisted $\mathrm{spin^c}$ Dirac operator and $[p]$ is its higher index. More precisely,  
		there is an even-dimensional $\mathrm{spin^c}$ $\Gamma$-manifold $X$ together with a $\Gamma$-equivariant vector bundle $E$   such that rationally $[p]$ equals  the $\Gamma$-index of the twisted Dirac operator $\sD_E$ on $X$. For the convenience of the reader,  we shall review the construction of this index. Recall that  a function $\chi$ on $\mathbb R$ is called a normalizing function if $\chi \colon \mathbb R \to [-1, 1]$ is an odd continuous function such that  $\chi(x) >0$ when $x>0$,  and $
	\chi(x) \to \pm 1$ as $x \to \pm \infty$. Let $f$ be a smooth normalizing function whose distributional Fourier transform has compact support. }
Let us denote  $f(\sD_E)=\begin{pmatrix}
	0 & F_+\\ F_- & 0
	\end{pmatrix}$. Using the formula
	\[f(\sD) = \frac{1}{2\pi}\int_{\mathbb R} \widehat f(s) e^{isD} ds \] and the fact that $e^{isD}$ has propagation less than or equal to $|s|$, it follows that $f(\sD)$ has finite propagation, since the Fourier transform $\widehat f$ has compact support. Now we define 
	$$w=\begin{pmatrix}
	1&F_+\\0&1
	\end{pmatrix}\begin{pmatrix}
	1&0\\-F_-&1
	\end{pmatrix}\begin{pmatrix}
	1&F_+\\0&1
	\end{pmatrix}\begin{pmatrix}
	0&-1\\1&0
	\end{pmatrix}.$$
	Note that 
	\[ w^{-1} = \begin{pmatrix}
	0&1\\-1&0
	\end{pmatrix} \begin{pmatrix}
	1&-F_+\\0&1
	\end{pmatrix} \begin{pmatrix}
	1&0\\F_-&1
	\end{pmatrix} \begin{pmatrix}
	1&-F_+\\0&1
	\end{pmatrix}.\]
The higher index of $\sD$ is given as the following formal difference of idempotents:
	$$ \Big[w\begin{pmatrix}
	1&0\\0&0
	\end{pmatrix}w^{-1}\Big]-\Big[\begin{pmatrix}
	1&0\\0&0
	\end{pmatrix}\Big].$$
	By construction, we have
	\[ w\begin{pmatrix}
	1&0\\0&0
	\end{pmatrix}w^{-1}-\begin{pmatrix}
	1&0\\0&0
	\end{pmatrix} \in \mathcal S \otimes \mathbb C\Gamma \]
	where $\mathcal S $ is the algebra of trace class operators on a Hilbert space. In particular, we see that the element $[p]\in K_0(C^*_{\max}(\Gamma))$ from above can be (rationally)  represented by a formal difference of idempotents  with finite propagation.

Let $\Psi_i$ be the canonical quotient map from $ \widetilde M$ to $M_{\Gamma_i} = \widetilde M/{\Gamma_i}$ and \[ (\Psi_i)_\ast\colon  K_1(C_{L,0, \max}^\ast(\widetilde M)^\Gamma) \to K_1(C_{L,0, \max}^\ast(M_{\Gamma_i})^{\Gamma/\Gamma_i})\]
the corresponding morphism 
induced by $\Psi_i$. 		 By 
\cite[Theorem 1.1]{guoxieyu}, 
we have 
	\[   (\Psi_i)_\ast (\rho_{\max}(\widetilde D)) = \rho(D_{\Gamma_i}) \textup{ in } K_1(C_{L,0, \max}^\ast(M_{\Gamma_i})^{\Gamma/\Gamma_i}). \]
	By passing to the universal spaces, we have 
	\[   (\Psi_i)_\ast (\rho_{\max}(\widetilde D)) = \rho(D_{\Gamma_i}) \textup{ in } K_1(C^*_{L,0, \max}(E(\Gamma/\Gamma_i))^{\Gamma/\Gamma_i}). \]
Consider the following commutative diagram of long exact sequences\footnote{Since $\Gamma/\Gamma_i$ is finite, we have $C^*_{L,0, \max}(E(\Gamma/\Gamma_i))^{\Gamma/{\Gamma_i}} \cong C^*_{L,0}(E(\Gamma/\Gamma_i))^{\Gamma/{\Gamma_i}}$.}: 
		\begin{equation}\label{cd:longexact2}
	\begin{gathered} \scalebox{0.8}{ 
\xymatrixcolsep{1pc}	\xymatrix{
		  K_0(C^*_{L,\max }(E\Gamma)^\Gamma)\otimes\mathbb Q \ar[d] \ar[r]  & K_0(C_{\max}^*(\Gamma))\otimes\mathbb Q  \ar[r]^-{\partial} \ar[d]^{(\pi_{\Gamma_i})_\ast} & K_1(C^*_{L,0, \max}(E\Gamma)^\Gamma)\otimes\mathbb Q \ar[d]^{(\Psi_i)_\ast} \\
	 K_0(C^*_{L }(E(\Gamma/\Gamma_i))^{\Gamma/\Gamma_i})\otimes\mathbb Q \ar[r]  & K_0(C^*_r(\Gamma/\Gamma_i))\otimes\mathbb Q \ar[r]^-{\partial}  &  K_1(C^*_{L,0}(E(\Gamma/\Gamma_i))^{\Gamma/\Gamma_i})\otimes\mathbb Q   }
           }
       	\end{gathered}
		\end{equation} 
		where $(\pi_{\Gamma_i})_\ast\colon K_0(C_{\max}^*(\Gamma))\to K_0(C_r^*(\Gamma/\Gamma_i))$ is the natural morphism induced by the canonical quotient map $\pi_{\Gamma_i}\colon \Gamma\to \Gamma/\Gamma_i$.
Let us denote $(\pi_{\Gamma_i})_\ast(p)$ by $p_i$. It follows from the commutative diagram above that 
\begin{equation}\label{eq:aps}
\partial(p_i) = \rho(D_{\Gamma_i}). 
\end{equation} 
By \cite[Lemma 3.9 \& Theorem 4.3]{Xie}, for each $\Gamma_i$,  there exists a determinant map
  $$\tau_i\colon K_1(C^*_{L,0}(E(\Gamma/\Gamma_i))^{\Gamma/{\Gamma_i}}) \to \mathbb C$$ such that
\[ \frac{1}{2}\eta_{\langle \pi_{\Gamma_i}(\alpha)\rangle }(D_{\Gamma_i})  = -\tau_{i}(\rho(D_{\Gamma_i}))= \tr_{\langle\pi_{\Gamma_i}(\alpha)\rangle }(p_i). \]
 Since the idempotent $p$ has finite propagation, it follows from Lemma \ref{lemma:limitoftrace} that 
 $$\lim_{i \to \infty }\eta_{\langle \pi_{\Gamma_i}(\alpha)\rangle }(D_{\Gamma_i})=2\lim_{i \to \infty}\tr_{\langle\pi_{\Gamma_i}(\alpha)\rangle }(p_i)=2\tr_{\langle\alpha\rangle}(p),$$
 and  the limit stabilizes.

\end{proof}

\begin{remark}\ \\
	\begin{enumerate}
		\item In Theorem $\ref{prop:max}$ above, instead of the assumption that  $\{\Gamma_i\}$ is a sequence of finite-index normal subgroups that distinguishes the conjugacy class  $\langle\alpha\rangle$, we assume that $\{\Gamma_i\}$ is a decreasing sequence\footnote{We say $\{\Gamma_i\}$ is a decreasing sequence of finite-index normal subgroups of $\Gamma$ if $\Gamma_{i} \supseteq \Gamma_{i+1}$ for all $i$.} of finite-index normal subgroups of $\Gamma$. The same proof shows that if the maximal Baum-Connes assembly map for $\Gamma$ is rationally an isomorphism,  then
		\[ \lim_{i\to\infty}\eta_{\langle\pi_{\Gamma_i}(\alpha)\rangle}(D_{\Gamma_i}) \] stabilizes. On the other hand, to eventually relate the limit  $\lim_{i\to\infty}\eta_{\langle\pi_{\Gamma_i}(\alpha)\rangle}(D_{\Gamma_i}) $ to $\eta_{\langle \alpha \rangle}(\widetilde D)$, if the latter exists, one will likely have to assume the condition that $\{\Gamma_i\}$ distinguishes $\langle\alpha\rangle$. 
		\item Note that Theorem $\ref{prop:max}$ only answers part (I) of Question $\ref{question}$. Part (II) of Question $\ref{question}$ is still open, even under the assumption that the maximal Baum-Connes conjecture holds for $\Gamma$.
		\item Although Theorem $\ref{prop:max}$ assumes that the maximal Baum-Connes conjecture holds for $\Gamma$, it is clear from the proof  that it suffices to assume $\rho_{\max}(\widetilde D)$ is rationally in the image of the composition of the following maps: \[ K_0^\Gamma(\underline{E}\Gamma)\to K_0(C_{\max}^\ast(\Gamma)) \xrightarrow{\ \partial \  } K_1(C^*_{L,0,\max }(E\Gamma)^\Gamma). \]
	\end{enumerate}
  
\end{remark}
By a theorem of Higson and Kasparov \cite[Theorem 1.1]{MR1821144}, the maximal Baum-Connes conjecture holds for all a-T-menable groups. Together with Theorem $\ref{prop:max}$ above, this proves Proposition $\ref{prop:atmenable}$ at the beginning of the section.

As mentioned above, the maximal Baum-Connes assembly map $\mu_{\max}$ fails to be an isomorphism in general. For example, $\mu_{\max}$ fails to be surjective for non-finite property (T) groups. On the contrary, there is no counterexample to the  Baum-Connes conjecture, at the time of writing. In particular, the Baum-Connes conjecture is known to hold for all hyperbolic groups \cite{MR2874956, MR1914618}, many of which have property (T). For this reason,  we shall now investigate Question $\ref{question}$, in particular, the convergence of
	\[ \lim_{i\to\infty}\eta_{\langle\pi_{\Gamma_i}(\alpha)\rangle}(D_{\Gamma_i}) \]
when the group $\Gamma$ satisfies the Baum-Connes conjecture. 

One of the first difficulties we face is that reduced group $C^\ast$-algebras are not functorial with respect to group homomorphisms in general. As a result, the functoriality of higher rho invariants is, a priori, lost  in the reduced $C^\ast$-algebra setting.  Note that a key step (cf. Equation $\eqref{eq:aps}$) in the proof of Theorem $\ref{prop:max}$ is the existence of  a ``universal" idempotent $p\in \mathcal S\otimes \mathbb C\Gamma$ such that 
\[ \partial(p_i) = \rho(D_{\Gamma_i}), \]
where $p_i = (\pi_{\Gamma_i})_\ast(p)$. In the maximal setting, the existence of such a universal idempotent follows if the rational maximal Baum-Connes conjecture holds for $\Gamma$. \added{In the following, we shall discuss some geometric conditions that are sufficient for deriving an analogue of Theorem $\ref{prop:max}$ in the reduced setting. How these geometric conditions are related to  the (reduced) Baum-Connes conjecture will be explained in Appdendix $\ref{sec:stolz}$. }

Recall that $M$ is  a closed spin manifold equipped with a Riemannian metric $h$ of positive scalar curvature. Let $\varphi\colon M \to B\Gamma$ be the classifying map for the covering $\widetilde M \to M$, that is, the pullback of $E\Gamma$ by $\varphi$ is $\widetilde M$.  In the following, we denote by $\mathfrak B$ the Bott manifold, a simply connected spin manifold of dimension $8$ with $\widehat A(\mathfrak B) =1$. This manifold is not unique, but any choice will work for the following discussion. 

\begin{definition}\label{def:bound}
	We say  a multiple of $(M, \varphi, h)$  stably bounds with respect to $B\Gamma$ if there exists a compact spin manifold $W$ and a map $\Phi\colon W\to B\Gamma$ such that $\partial W = \bigsqcup_{i=1}^\ell M'$ and  $\Phi|_{\partial W} = \bigsqcup_{i=1}^\ell \varphi'$, where $(M', \varphi', h')$ is the direct product of $(M, \varphi, h)$ with finitely many copies of $\mathfrak B$ and $\bigsqcup_{i=1}^\ell M'$ is the disjoint union of $\ell$ copies of $M'$.
\end{definition}

\begin{definition}\label{def:properbound}
 Let $\tilde h$ be the metric on $\widetilde M$ lifted from $h$. We say a multiple of $(\widetilde M, \tilde h)$ positively stably  bounds with respect to $\underline{E}\Gamma$ if   there exists a spin cocompact $\Gamma$-manifold\footnote{Here a $\Gamma$-manifold is a Riemannian manifold equipped with a proper isometric action of $\Gamma$.} $\widetilde V$ equipped with a $\Gamma$-equivariant positive scalar curvature metric $g_{\widetilde V}$ such that $\partial \widetilde V = \bigsqcup_{i=1}^\ell \widetilde M'$ (as $\Gamma$-manifolds) and $g_{\widetilde V}$ has product structure near $\partial \widetilde V$, where $(\widetilde M', \widetilde h')$ is the direct product of $(\widetilde M, \widetilde h)$ with finitely many copies of $\mathfrak B$.
\end{definition}

\added{The following proposition is an analogue of Theorem $\ref{prop:max}$ in the reduced setting, under the  assumptions that a multiple of $(M, \varphi, h)$ stably bounds with respect to $B\Gamma$ and a multiple of 
$(\widetilde M, \tilde h)$ positively stably  bounds with respect to $\underline{E}\Gamma$. For example, if $M$ is  a lens space equipped with the metric inherited from the standard round metric on $S^n$ and $\Gamma = \pi_1(M)$, then both of these assumptions are satisfied. In general, the validity of these two assumptions is closely related to the reduced Baum-Connes conjecture and the Stolz conjecture on positive scalar curvature metrics. We refer the reader to Appendix $\ref{sec:stolz}$ for more details. }

\begin{proposition}\label{prop:red}
Let $M$ be a closed spin manifold equipped with a Riemannian metric $h$ of positive scalar curvature.	 Given a non-identity element $\alpha \in \Gamma$, suppose  $\{\Gamma_i\}$ is a sequence of finite-index normal subgroups of $\Gamma$ that distinguishes the conjugacy class $\langle 
	\alpha \rangle$. If  a multiple of $(M, \varphi, h)$ stably bounds with respect to $B\Gamma$ and a multiple of 
	$(\widetilde M, \tilde h)$ positively stably  bounds with respect to $\underline{E}\Gamma$, then 
	\[ \lim_{i\to\infty}\eta_{\langle\pi_{\Gamma_i}(\alpha)\rangle}(D_{\Gamma_i}) \] stabilizes, that is, $ \exists k>0$ such that   $\eta_{\langle\pi_{\Gamma_i}(\alpha)\rangle}(D_{\Gamma_i}) = \eta_{\langle\pi_{\Gamma_k}(\alpha)\rangle}(D_{\Gamma_k})
	$,  for all $ i\geqslant k.$
\end{proposition}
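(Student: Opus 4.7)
The plan is to adapt the proof of Theorem \ref{prop:max} to the reduced setting. The role played there by the rational surjectivity of the maximal Baum--Connes assembly map, namely the production of a finite-propagation formal difference of idempotents $p \in \mathcal{S} \otimes \mathbb{C}\Gamma$ with $\partial[p] = \rho(\widetilde D)$ rationally, must now be supplied directly by the two bounding hypotheses.

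First, after replacing the given data by a common multiple, I may assume that the compact spin cobordism $(W,\Phi)$ from Definition \ref{def:bound} and the cocompact proper $\Gamma$-cobordism $\widetilde V$ of positive scalar curvature from Definition \ref{def:properbound} use the same number of Bott factors and the same boundary multiplicity $\ell$, so that $\partial W = \ell M'$ and $\partial \widetilde V = \ell \widetilde M'$. Pulling back $E\Gamma \to B\Gamma$ along $\Phi$ produces a free cocompact $\Gamma$-manifold $\widetilde W$ with $\partial \widetilde W = \ell \widetilde M'$ as $\Gamma$-manifolds. Gluing $\widetilde W$ to $-\widetilde V$ along this common boundary yields a closed cocompact proper $\Gamma$-spin manifold $N$ whose metric is of positive scalar curvature on the $\widetilde V$ side and is product-like across the gluing region.

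Next, I would define $[p] \in K_0(C^*_r(\Gamma))$ to be the equivariant higher index of the Dirac operator $\sD_N$. Applying the same $wP_0w^{-1}$ formula with a normalizing function whose distributional Fourier transform has compact support, as in the proof of Theorem \ref{prop:max}, produces a finite-propagation representative of $[p]$ as a formal difference of idempotents in $\mathcal{S} \otimes \mathbb{C}\Gamma$. The key geometric input is the identity $\partial[p] = \ell \cdot \rho(\widetilde D')$ in $K_1(C^*_{L,0}(E\Gamma)^\Gamma)$, where $D'$ is the Dirac operator on $\widetilde M'$; this would follow from a relative higher APS index theorem, the positive scalar curvature assumption on $\widetilde V$ ensuring that only the $\widetilde W$ side contributes to the boundary term. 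Since $\widehat A(\mathfrak B) = 1$, the Bott product formula for higher rho invariants identifies $\rho(\widetilde D')$ with $\rho(\widetilde D)$ rationally, so $p$ is the required finite-propagation preimage of $\rho(\widetilde D)$ up to a nonzero rational multiple.

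From this point the argument reproduces the last steps of the proof of Theorem \ref{prop:max}: functoriality of higher rho invariants gives $\partial[p_i] = \rho(D_{\Gamma_i})$ with $p_i = (\pi_{\Gamma_i})_*(p)$ of finite propagation in $\mathcal{S} \otimes \mathbb{C}(\Gamma/\Gamma_i)$, Xie's determinant map yields $\eta_{\langle \pi_{\Gamma_i}(\alpha)\rangle}(D_{\Gamma_i}) = 2 \tr_{\langle \pi_{\Gamma_i}(\alpha)\rangle}(p_i)$ up to the absorbed rational factor, and Lemma \ref{lemma:limitkerneltrace} forces the sequence to stabilize at $2 \tr_{\langle\alpha\rangle}(p)$. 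The main obstacle will be making the relative higher APS formula rigorous for the proper (not free) $\Gamma$-manifold $N$: the gluing collar carries a free action, but points in the interior of $\widetilde V$ may have nontrivial finite stabilizers, so one must work with equivariant index theory for proper actions and verify carefully that positive scalar curvature on $\widetilde V$ makes that side's contribution to $\partial[p]$ vanish. Matching the boundary multiplicities produced by the two bounding conditions and tracking the rational factor produced by the Bott stabilization should be routine in comparison.
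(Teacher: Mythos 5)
Your geometric construction is the one the paper uses: glue the free cobordism $\widetilde W$ (pulled back from $W$) to the psc proper cobordism $\widetilde V$ along $\ell\,\widetilde M'$, take the Dirac index of the resulting closed cocompact proper $\Gamma$-manifold, and represent it by a finite-propagation formal difference of idempotents in $\mathcal S\otimes\mathbb C\Gamma$ via the $w\bigl(\begin{smallmatrix}1&0\\0&0\end{smallmatrix}\bigr)w^{-1}$ formula; the endgame via the determinant maps and Lemma \ref{lemma:limitoftrace} is also the same. However, the way you route the middle of the argument has a genuine gap, and it is precisely the one the paper's proof is engineered to avoid. You make the identity $\partial[p]=\ell\cdot\rho(\widetilde D')$ in $K_1(C^*_{L,0}(E\Gamma)^\Gamma)$ the ``key geometric input,'' to be supplied by a relative higher APS theorem for the proper, non-free manifold $N$ over the infinite group $\Gamma$ in the reduced setting. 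No such statement is invoked in the paper, and you correctly sense it is the hard point --- but it is also unnecessary. The paper never relates $[p]$ to $\rho(\widetilde D)$ over $\Gamma$ at all: it uses the relative index theorem of Bunke/Xie--Yu only for the \emph{closed-manifold} identity $\ind_{\max}(D_Y)=\ind_{\max}(D_{\widetilde W},\tilde g)$ in the \emph{maximal} completion, then pushes forward to the finite quotients, where $(\pi_{\Gamma_i})_\ast(\ind_{\max}(D_{\widetilde W},\tilde g))=\ind(D_{W_{\Gamma_i}},g_i)$, and only there applies the APS-type relation $\partial(\ind(D_{W_{\Gamma_i}},g_i))=\rho(D_{M_{\Gamma_i}})$, which is known for the compact manifolds $W_{\Gamma_i}$ with finite deck group by \cite{MR3286895,Xiepos}.

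A second, related defect: you place $[p]$ in $K_0(C^*_r(\Gamma))$ and then apply $(\pi_{\Gamma_i})_\ast$ and ``functoriality of higher rho invariants.'' The quotient map $\Gamma\to\Gamma/\Gamma_i$ does not induce a homomorphism $C^*_r(\Gamma)\to C^*_r(\Gamma/\Gamma_i)$ in general, and the paper explicitly flags that functoriality of higher rho invariants is a priori lost in the reduced setting --- this is the whole reason the reduced case requires the bounding hypotheses instead of a Baum--Connes hypothesis. The repair is to take $p=\ind_{\max}(D_Y)\in KO_{n+1}(C^*_{\max}(\Gamma;\mathbb R))$, where $(\pi_{\Gamma_i})_\ast\colon C^*_{\max}(\Gamma;\mathbb R)\to C^*_r(\Gamma/\Gamma_i;\mathbb R)$ exists, and to run the APS identity on the quotients as above. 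With that rerouting, your argument coincides with the paper's; as written, the two steps named above do not go through.
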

\begin{proof}
	For notational simplicity, let us assume $(M, \varphi, h)$ itself bounds with respect to $B\Gamma$,  that is, there exists a compact spin manifold $W$ and a map $\Phi\colon W\to B\Gamma$ such that $\partial W = M$ and  $\Phi|_{\partial W} = \varphi$. Similarly, let us assume  $(\widetilde M, \tilde h)$ itself positively stably  bounds with respect to $\underline{E}\Gamma$, that is,  there exists a cocompact $\Gamma$-spin manifold $\widetilde V$ with  $\partial \widetilde V = \bigsqcup_{i=1}^\ell \widetilde M'$ (as $\Gamma$-manifolds) and $\widetilde V$ is equipped with  a $\Gamma$-equivariant positive scalar curvature metric  that has product structure near $\partial \widetilde V$.  The general case can be proved in exactly the same way.

	Endow $W$ with a Riemannian metric $g$ which has product structure near $\partial W = M$ and whose restriction on $\partial W$ is the positive scalar curvature metric $h$. Let $\widetilde W$ be the covering space of $W$ induced by the map $\Phi\colon W\to B\Gamma$ and $\tilde g$ be the lift of $g$ from $W$ to $\widetilde W$. Due to the positive scalar curvature of $\tilde g$ near the boundary of $\widetilde W$,  the corresponding Dirac operator $D_{\widetilde W}$ on $\widetilde W$ with respect to the metric $\tilde g$ has a well-defined higher index $\ind(D_{\widetilde W}, \tilde g)$ in $KO_{n+1}(C_r^\ast(\Gamma;\mathbb R)). $ 
	
	Now for each normal subgroup $\Gamma_i$ of $\Gamma$, let  $M_{\Gamma_i} = \widetilde M/\Gamma_i$, $W_{\Gamma_i} = \widetilde W/\Gamma_i$ and $g_i$ be the lift of $g$ to $W_{\Gamma_i}$. Similarly, the corresponding Dirac operator $D_{W_{\Gamma_i}}$ on $W_{\Gamma_i}$ with respect to the metric $g_i$ has a well-defined higher index $\ind(D_{W_{\Gamma_i}}, g_i) $ in $KO_{n+1}(C_r^\ast(\Gamma/\Gamma_i;\mathbb R)). $ Moreover, we have 
	\[\partial (\ind(D_{W_{\Gamma_i}}, g_i) ) = \rho(D_{M_{\Gamma_i}}) \textup{ in }  KO_n(C_{L,0}^\ast(E(\Gamma/\Gamma_i); \mathbb R)^{\Gamma/\Gamma_i}), \] 
	cf. \cite[Theorem 1.14]{MR3286895}\cite[Theorem A]{Xiepos}.  
	
	By \cite[Lemma 3.9 \& Theorem 4.3]{Xie}, for each $\Gamma_i$,  there exists a determinant map
	$$\tau_i\colon K_1(C^*_{L,0, \max}(E(\Gamma/\Gamma_i))^{\Gamma/{\Gamma_i}}) \to \mathbb C$$ such that
	\[ \frac{1}{2}\eta_{\langle \pi_{\Gamma_i}(\alpha)\rangle }(D_{M_{\Gamma_i}})  = - \tau_{i}(\rho(D_{M_{\Gamma_i}}))= \tr_{\langle\pi_{\Gamma_i}(\alpha)\rangle }(\ind(D_{W_{\Gamma_i}}, g_i)). \]Therefore, to prove the proposition,  it suffices to show that there exists $[p]\in KO_{n+1}(C_{\max}^\ast(\Gamma;\mathbb R))$ such that $[p]$ is represented by a formal difference of idempotents in $\mathcal S\otimes \mathbb C\Gamma$ and 
	\[  (\pi_{\Gamma_i})_\ast([p]) = \ind(D_{W_{\Gamma_i}}, g_i)  \]
	for all $k$, where  $(\pi_{\Gamma_i})_\ast\colon C_{\max}^\ast(\Gamma; \mathbb R) \to  C_r^\ast(\Gamma/\Gamma_i; \mathbb R)  $ is the morphism induced by the quotient homomorphism $\pi_{\Gamma_i}\colon \Gamma \to \Gamma/\Gamma_i$. The existence of such a ``universal" $K$-theory element with finite propagation can be seen as follows.

	Let $Y$ be the spin $\Gamma$-manifold obtained by gluing $\widetilde V$ and $\widetilde W$ along their common boundary $\widetilde M$. Since the scalar curvature  on $\widetilde V$ is uniformly bounded below by a positive number, \added{it follows from the relative index theorem \cite{UB95,MR3122162} that}
	\[ \ind_{\max}(D_Y) = \ind_{\max}(D_{\widetilde W}, \tilde g)  \textup{ in } KO_{n+1}(C_{\max}^\ast(\Gamma;\mathbb R)). \]
	Let $p = \ind_{\max}(D_Y)$. By the discussion in the proof of Theorem $\ref{prop:max}$, the index class  $\ind_{\max}(D_Y)$ can be represented by a  formal difference of idempotents in $\mathcal S\otimes \mathbb C\Gamma$. On the other hand,  we have
	\[  (\pi_{\Gamma_i})_\ast(\ind_{\max}(D_{\widetilde W}, \tilde g))  = \ind(D_{W_{\Gamma_i}}, g_i) \]
	for all $i$. To summarize, we have 
	\[ (\pi_{\Gamma_i})_\ast([p]) = (\pi_{\Gamma_i})_\ast( \ind_{\max}(D_{\widetilde W}, \tilde g)) =  \ind(D_{W_{\Gamma_i}}, g_i).  \] This finishes the proof. 
	
\end{proof}

In Theorem $\ref{prop:max}$ and Proposition $\ref{prop:red}$, we have mainly focused on the  part (I) of Question $\ref{question}$. In the following, we shall try to answer part (II) of Question $\ref{question}$ in some special cases. Note that, a key ingredient of the proofs for Theorem $\ref{prop:max}$ and Proposition $\ref{prop:red}$ is the existence of a $K$-theory element\footnote{In the case of Proposition $\ref{prop:red}$, we map $KO$-theory to $K$-theory.} $[p_{\max}]\in K_{n+1}(C_{\max}^\ast(\Gamma)\otimes \mathcal K ) $ that is represented by a formal difference of idempotents in $\mathcal S\otimes \mathbb C\Gamma$ such that 
\[ \partial (p_{\max}) = \rho_{\max}(\widetilde D),\] where 
\[ \partial \colon  K_{n+1}(C_{\max}^\ast(\Gamma))\to KO_{n}(C_{L,0, \max}^\ast(E\Gamma)^\Gamma)\]
is the usual boundary map in the corresponding $K$-theory long exact sequence. We shall assume the existence of such a $K$-theory element $p_{\max}$ throughout the rest of the section. 

In addition, suppose there exists a smooth dense subalgebra $\mathcal A$ of $C_r^\ast(\Gamma)$  
such that  $\mathcal A\supset \mathbb C\Gamma$ and the trace map $\tr_{\langle \alpha \rangle }\colon \mathbb C\Gamma \to \mathbb C$  extends to a trace map $\mathcal A \to \mathbb C$. In this case, $\tr_{\langle \alpha \rangle }\colon  \mathcal A\to  \mathbb C$ induces a trace map 
\[ \tr_{\langle \alpha \rangle }\colon  K_0(C_r^\ast(\Gamma)) \cong K_0(\mathcal A)  \to \mathbb C\] 
and a determinant map (cf. \cite{Xie})
\[ \tau_{\alpha}\colon K_1(C^*_{L,0}(E\Gamma)^\Gamma)\to \mathbb C\]
such that the following diagram commutes: 
\[ \xymatrix{ K_0(C_r^\ast(\Gamma)) \ar[r]^-{\partial} \ar[d]_{-\tr_{\langle \alpha\rangle }} &  K_1(C^*_{L,0}(E\Gamma)^\Gamma) \ar[d]^{\tau_{\alpha}}\\
	\mathbb C \ar[r]^{=} &  \mathbb C} \]
Such a smooth dense subalgebra indeed exists  if $\langle \alpha \rangle$ has polynomial growth (cf. \cite{CM90}\cite{Xie}) or $\Gamma$ is word hyperbolic (cf. \cite{Puschnigg}\cite{CWXY}).

Note that the canonical morphism  
\[ K_1(C^*_{L,0, \max}(E\Gamma)^\Gamma)  \to K_1(C^*_{L,0}(E\Gamma)^\Gamma) \]
maps $\rho_{\max}(\widetilde D)$ to $\rho(\widetilde D)$. Let $p_r$ be the image of $p_{\max}$ under the canonical morphism $K_0(C_{\max}^\ast(\Gamma)) \to  K_0(C_r^\ast(\Gamma))$. The same argument from the proof of Theorem $\ref{prop:max}$ shows that $\partial(p_r) = \rho(\widetilde D)$ and
\[ \frac{1}{2}\eta_{\langle \alpha\rangle }(\widetilde D)  = - \tau_{\alpha}(\rho(\widetilde D))= \tr_{\langle\alpha\rangle }(p_r). \] 
Similarly, for each finite-index normal subgroup $\Gamma_i\subset \Gamma$, 	let 
\[ (\pi_{\Gamma_i})_\ast\colon K_0(C_{\max}^*(\Gamma))\to K_0(C_r^*(\Gamma/\Gamma_i))\] be the natural morphism induced by the  quotient map $\pi_{\Gamma_i}\colon \Gamma\to \Gamma/\Gamma_i$. Let us denote $ p_i \coloneqq  (\pi_{\Gamma_i})_\ast(p)$. We have  $\partial(p_i) = \rho(D_{\Gamma_i})$ and
\[ \frac{1}{2}\eta_{\langle \pi_{\Gamma_i}(\alpha)\rangle }(D_{\Gamma_i})  =  -\tau_{i}(\rho(D_{\Gamma_i}))= \tr_{\langle\pi_{\Gamma_i}(\alpha)\rangle }(p_i). \] 
where 
$$\tau_i\colon K_1(C^*_{L,0}(E(\Gamma/\Gamma_i))^{\Gamma/{\Gamma_i}}) \to \mathbb C$$ is a determinant map induced by the trace map $\tr_{\langle \pi_{\Gamma_i}(\alpha)\rangle}$, cf. \cite[Lemma 3.9 \& Theorem 4.3]{Xie}. Since $p_{\max}$ is a formal difference of idempotents in $\mathcal S\otimes \mathbb C\Gamma$, it follows that  the limit $ \displaystyle \lim_{i \to \infty } \tr_{\langle\pi_{\Gamma_i}(\alpha)\rangle }(p_i) $
stabilizes and is equal to $\tr_{\langle\alpha\rangle }(p_r)$. Thus  the limit \[ \lim_{i\to\infty}\eta_{\langle \pi_{\Gamma_i}(\alpha)\rangle }(D_{\Gamma_i}) \]
stabilizes and is equal to $\eta_{\left\langle \alpha\right\rangle }(\widetilde D)$.

In particular, as a consequence of the above discussion and Theorem $\ref{prop:max}$, we have the following theorem 

\begin{theorem}\label{thm:athyper}
	If $\Gamma$ is both a-T-menable and word hyperbolic\footnote{For example, if $\Gamma$  is a virtually free group, then it is both a-T-menable and word hyperbolic.}, then 
	\[ \lim_{i\to\infty}\eta_{\langle \pi_{\Gamma_i}(\alpha)\rangle }(D_{\Gamma_i}) = \eta_{\left\langle \alpha\right\rangle }(\widetilde D).  \]
\end{theorem}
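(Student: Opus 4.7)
The plan is to invoke the two hypotheses separately to supply the two ingredients needed in the general principle stated just before the theorem: (i) a ``universal'' idempotent class $p_{\max}$ in $K_0(C^*_{\max}(\Gamma))$, representable by a formal difference of idempotents in $\mathcal S\otimes \mathbb C\Gamma$, whose image under the boundary map is $\rho_{\max}(\widetilde D)$; and (ii) a smooth dense subalgebra $\mathcal A$ of $C^*_r(\Gamma)$ containing $\mathbb C\Gamma$ to which $\tr_{\langle\alpha\rangle}$ extends continuously.

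For (i), I would invoke $a$-T-menability. By the Higson--Kasparov theorem, the maximal Baum--Connes assembly map $\mu_{\max}\colon K_i^\Gamma(\underline E\Gamma)\to K_i(C^*_{\max}(\Gamma))$ is an isomorphism (in particular rationally). Hence the argument in the proof of Theorem \ref{prop:max} applies verbatim: the long exact sequence associated with the short exact sequence of $C^*$-algebras gives surjectivity of $\partial$, and the Baum--Douglas model for $K$-homology together with the truncated Fourier transform representation of $f(\sD_E)$ produces an idempotent representative $p_{\max}$ with finite propagation living in $\mathcal S\otimes \mathbb C\Gamma$ such that $\partial(p_{\max})=\rho_{\max}(\widetilde D)$. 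For (ii), I would invoke word hyperbolicity: by Puschnigg's construction, $C^*_r(\Gamma)$ admits a smooth dense subalgebra $\mathcal A\supset \mathbb C\Gamma$ to which $\tr_{\langle\alpha\rangle}$ extends continuously as a trace, for every conjugacy class $\langle \alpha\rangle$; this is exactly item (2) in the list of known cases following Definition \ref{def:deloceta}.

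With both (i) and (ii) in hand, I would now directly follow the discussion preceding Theorem \ref{thm:athyper}. Let $p_r$ be the image of $p_{\max}$ under the canonical map $K_0(C^*_{\max}(\Gamma))\to K_0(C^*_r(\Gamma))$ and $p_i=(\pi_{\Gamma_i})_*(p_{\max})\in K_0(C^*_r(\Gamma/\Gamma_i))$. Using the determinant maps $\tau_\alpha$ and $\tau_i$ of \cite[Lemma 3.9 \& Theorem 4.3]{Xie}, together with commutativity of the trace/determinant diagram, one has
\[
\tfrac{1}{2}\eta_{\langle \alpha\rangle}(\widetilde D)=\tr_{\langle \alpha\rangle}(p_r)\quad\text{and}\quad \tfrac{1}{2}\eta_{\langle \pi_{\Gamma_i}(\alpha)\rangle}(D_{\Gamma_i})=\tr_{\langle \pi_{\Gamma_i}(\alpha)\rangle}(p_i).
\]
Since $p_{\max}$ is a formal difference of idempotents in $\mathcal S\otimes\mathbb C\Gamma$, hence of finite propagation, Lemma \ref{lemma:limitoftrace} (equivalently Lemma \ref{lemma:limitkerneltrace}) yields that the sequence $\tr_{\langle \pi_{\Gamma_i}(\alpha)\rangle}(p_i)$ stabilizes at $\tr_{\langle \alpha\rangle}(p_r)$, which gives the claimed equality.

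The only mildly delicate point is making sure that the $p_{\max}$ produced by the Baum--Douglas argument in Theorem \ref{prop:max} is compatible with the reduced picture used for Puschnigg's subalgebra; but this is automatic because the canonical map $C^*_{\max}(\Gamma)\to C^*_r(\Gamma)$ is an algebra homomorphism that is the identity on $\mathbb C\Gamma$, so a finite-propagation representative in $\mathcal S\otimes\mathbb C\Gamma$ descends to a finite-propagation representative of $p_r$. No further obstacle appears.
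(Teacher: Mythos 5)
Your proposal is correct and follows essentially the same route as the paper: Theorem \ref{thm:athyper} is stated there precisely as a corollary of Theorem \ref{prop:max} (a-T-menability giving the finite-propagation class $p_{\max}$ via Higson--Kasparov) combined with the preceding discussion (hyperbolicity giving Puschnigg's smooth dense subalgebra and hence the determinant/trace identification of $\eta_{\langle\alpha\rangle}(\widetilde D)$ with $2\tr_{\langle\alpha\rangle}(p_r)$). Nothing essential differs.
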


\section{Scalar curvature and $\ell^1$-summability}\label{sec:sc}
In this section, we show that the answers to both part (I) and part (II) of Question $\ref{question}$ are positive when the scalar curvature of the given spin manifold $M$ is bounded below by a sufficiently large positive number.

Throughout this section, assume  $M$ is an odd-dimensional closed spin manifold endowed with a positive scalar curvature metric and $\Gamma$ is a finitely generated discrete group. Let $\widetilde M$ be a regular $\Gamma$-covering space of $M$ and $\widetilde D$ be the Dirac operator lifted from $M$. For each finite-index normal subgroup $\Gamma'$ of $\Gamma$,  let $M_{\Gamma'} = \widetilde M/\Gamma'$ be the associated finite-sheeted covering space of $M$. Denote by $D_{\Gamma'}$ the Dirac opeartor on $M_{\Gamma'}$ lifted from $M$. 

Let $S$ be a symmetric finite generating set of $\Gamma$ and $\ell$ be the  associated word length function on $\Gamma$. There exist $C>0$ and $B>0$ such that
\begin{equation}\label{eq:growthofGamma}
\#\{\gamma\in \Gamma :\ \ell(g)\leqslant n\}\leqslant Ce^{B\cdot n}.
\end{equation}
for all $n\geq 0$. Let $K_\Gamma$ be the infimum of all such numbers $B$.

Furthermore, there exist $\theta_0,\theta_1,c_0,c_1>0$ such that 
\begin{equation}\label{eq:equivwithlength}
\theta_0\cdot \ell(\beta)-c_0\leqslant \dist(x,\beta x)\leqslant \theta_1\cdot \ell(\beta)+c_1
\end{equation} 
for all $x\in \mathcal F$ and $\beta\in \Gamma$, where $\mathcal F$ is a fundamental domain of $\widetilde M$ under the action of $\Gamma$. 
In particular, we may define $\theta_0$ as follows:
\begin{equation}\label{eq:distortion}
\theta_0=\liminf_{\ell(\beta)\to\infty}\left(\inf_{x\in\mathcal F}\frac{\dist(x,\beta x)}{\ell(\beta)}\right).
\end{equation}
\begin{definition}\label{def:cst}
	With the above notation, let us define 
	\[ \sigma_\Gamma \coloneqq \frac{2 K_{\Gamma}}{\theta_0}. \]
\end{definition}

The following theorem answers both part (I) and part (II) of  Question $\ref{question}$ positively, under the condition that the spectral gap of $\widetilde D$ at zero is sufficiently large. 

\begin{theorem}\label{main}
	With the same notation as above,  suppose $\{\Gamma_i\}$ is a sequence of finite-index normal subgroups that distinguishes the conjugacy class $\langle \alpha\rangle$ of a non-identity element $\alpha\in \Gamma$.  If the spectral gap of $\widetilde D$ at zero is greater than $\sigma_\Gamma$, then
\[ 	\lim_{i\to\infty}\eta_{\langle \pi_{\Gamma_i}(\alpha)\rangle }(D_{\Gamma_i})=\eta_{\left\langle \alpha\right\rangle }(\widetilde D). \]
\end{theorem}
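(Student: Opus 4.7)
The plan is to prove the theorem in two layers: first establish convergence $t$-pointwise via Lemma~\ref{lemma:limitkerneltrace}, and then commute the $t\to\infty$ limit in $i$ with the integral $\int_0^\infty\!\!\cdots\,dt$ by dominated convergence. The spectral-gap hypothesis $\lambda>\sigma_\Gamma$ is used precisely to produce an integrable dominating function on $(0,\infty)$.

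First I would identify the Schwartz kernels. Let $K_t$ be the smooth $\Gamma$-equivariant kernel of $\widetilde D e^{-t^2\widetilde D^2}$ on $\widetilde M\times\widetilde M$. By $\Gamma$-equivariance and functional calculus, the Schwartz kernel of $D_{\Gamma_i}e^{-t^2 D_{\Gamma_i}^2}$ on $M_{\Gamma_i}\times M_{\Gamma_i}$ is exactly the descent $\pi_{\Gamma_i}(K_t)$ in the sense of Section~\ref{sec:pre}, so
\[
\tr_{\langle\pi_{\Gamma_i}(\alpha)\rangle}\bigl(D_{\Gamma_i}e^{-t^2 D_{\Gamma_i}^2}\bigr)=\tr_{\langle\pi_{\Gamma_i}(\alpha)\rangle}\bigl(\pi_{\Gamma_i}(K_t)\bigr).
\]
Once I show $K_t$ is $\ell^1$-summable for each fixed $t>0$, Lemma~\ref{lemma:limitkerneltrace} provides the pointwise-in-$t$ conclusion $\tr_{\langle\pi_{\Gamma_i}(\alpha)\rangle}(\pi_{\Gamma_i}(K_t))\to\tr_{\langle\alpha\rangle}(K_t)$ as $i\to\infty$.

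The core analytic step is to prove a kernel estimate of the form
\[
|K_t(x,y)|\leq C\,t^{-N}\,e^{-\lambda^2 t^2/2}\,e^{-\dist(x,y)^2/(c\,t^2)}
\]
for explicit constants $c,N>0$. I would derive this from the Fourier representation $\widetilde De^{-t^2\widetilde D^2}=\tfrac{1}{2\pi}\int\widehat{f_t}(\xi)e^{i\xi\widetilde D}\,d\xi$ with $f_t(s)=se^{-t^2 s^2}$, combined with the finite-propagation property $e^{i\xi\widetilde D}(x,y)=0$ for $|\xi|<\dist(x,y)$, and inserting the spectral gap either via the split $\widetilde De^{-t^2\widetilde D^2}=\widetilde De^{-t^2\widetilde D^2/2}\cdot e^{-t^2\widetilde D^2/2}$ (whose second factor has operator norm $\leq e^{-\lambda^2t^2/2}$) or by a contour shift in the Fourier integral exploiting holomorphicity of $f_t$. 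Summing over $\gamma\in\Gamma$ using \eqref{eq:equivwithlength} and \eqref{eq:growthofGamma}, then completing the square in $\ell(\gamma)$, gives
\[
\|K_t\|_{\ell^1}\leq C'\,t^{-N}\exp\!\Bigl(-\tfrac{\lambda^2}{2}t^2+\tfrac{K_\Gamma^2}{c\,\theta_0^2}t^2\Bigr),
\]
which, for the sharp values of the constants delivered by the analysis, is integrable on $(0,\infty)$ exactly under the threshold $\lambda>\sigma_\Gamma=2K_\Gamma/\theta_0$ of Definition~\ref{def:cst}. Since $|\tr_{\langle\pi_{\Gamma_i}(\alpha)\rangle}(\pi_{\Gamma_i}(K_t))|\leq\|K_t\|_{\ell^1}$ uniformly in $i$, dominated convergence permits the interchange of $\lim_i$ and $\int_0^\infty$, and multiplication by $2/\sqrt{\pi}$ produces the claimed identity.

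The main obstacle will be obtaining the kernel estimate with a sharp enough constant in the exponent so that the balance between the spectral-gap factor and the exponential group growth matches the paper's threshold $\sigma_\Gamma=2K_\Gamma/\theta_0$ exactly; this is where the interplay of wave finite propagation and the Gaussian tail must be handled carefully, and where a naive heat-kernel bound would give a strictly weaker threshold. The short-$t$ regime is not an issue: for $\gamma\in\langle\alpha\rangle$ with $\alpha\neq e$ the quantity $\dist(x,\gamma x)$ stays uniformly bounded below by \eqref{eq:equivwithlength}, so the Gaussian off-diagonal factor easily absorbs the mild singularity $t^{-N}$ near $t=0$.
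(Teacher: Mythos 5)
Your overall framework matches the paper's exactly: establish pointwise-in-$t$ convergence via Lemma~\ref{lemma:limitkerneltrace}, then pass the limit through the $t$-integral by dominated convergence, with the dominating function produced by a kernel estimate that trades the spectral-gap decay in $t$ against the exponential volume growth of $\Gamma$. Where you diverge is in how you propose to obtain that dominating estimate, and that is where the gap is.

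The paper does not derive a single combined bound $\|K_t(x,y)\|\leq Ct^{-N}e^{-\lambda^2t^2/2}e^{-\dist(x,y)^2/(ct^2)}$ directly. Instead it imports two \emph{separate} estimates, the off-diagonal Gaussian bound \eqref{eq:smallt} and the uniform spectral-gap bound \eqref{eq:larget}, squares $\|K_t(x,y)\|$ so that it can be bounded by their product, applies AM--GM to turn the cross term into pure exponential decay $\exp(-\dist(x,y)(\sigma_\Gamma+\varepsilon)/\mu)$, and then takes a square root. Summing against the growth $e^{K_\Gamma\ell(\gamma)}$ using \eqref{eq:length} gives convergence precisely when $\frac{1}{2}(\sigma_\Gamma+\varepsilon)\theta_0 > K_\Gamma$, which is the threshold $\sigma_\Gamma = 2K_\Gamma/\theta_0$ of Definition~\ref{def:cst}. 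Your route to the combined estimate does not clearly deliver it: the factorization $\widetilde De^{-t^2\widetilde D^2}=(\widetilde De^{-t^2\widetilde D^2/2})(e^{-t^2\widetilde D^2/2})$ together with the operator-norm bound $\|e^{-t^2\widetilde D^2/2}\|_{op}\leq e^{-\lambda^2t^2/2}$ does not yield a pointwise bound on the composed kernel with the off-diagonal Gaussian retained. Composing a pointwise kernel bound with an operator-norm bound forces a Schur/Cauchy--Schwarz step where the intermediate variable is integrated over all of $\widetilde M$; this recovers the $t$-decay but washes out the $\dist(x,y)$-dependence unless you additionally localize using finite propagation, which is exactly the nontrivial part you flag but do not carry out. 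The contour-shift alternative is also not spelled out. The paper's geometric-mean interpolation of \eqref{eq:smallt} and \eqref{eq:larget} is the clean way to get what you want, and if you substitute it for your sketched derivation, your subsequent ``complete the square in $\ell(\gamma)$'' summation does give the correct threshold.

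One small correction: in the short-$t$ regime the uniform lower bound on $\dist(x,\gamma x)$ for $\gamma\neq e$ comes from the freeness and cocompactness of the $\Gamma$-action on $\widetilde M$, not from \eqref{eq:equivwithlength}, which only becomes effective for large $\ell(\gamma)$.
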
	
\begin{proof}
	It suffices to find a function of $t$ that is a dominating function for all the following functions:
	\[ \tr_{\left\langle \alpha \right\rangle }(\widetilde De^{-t^2\widetilde D^2})=
	\sum_{\gamma\in\langle \alpha \rangle}\int_\mathcal F \tr( K_t(x,\gamma x) )dx\]
	and 
	\[ \tr_{\langle\pi_{\Gamma_i}(\alpha)\rangle }(D_{\Gamma_i}e^{-t^2D^2_{\Gamma_i}})
	=\sum_{\omega \in \langle\pi_{\Gamma_i}(\alpha )\rangle}\int_{\mathcal F}\tr((K_i)_t(x,\omega x))dx.
	\] 
	and show that $\tr_{\langle\pi_{\Gamma_i}(\alpha)\rangle }(D_{\Gamma_i}e^{-t^2D^2_{\Gamma_i}})$ converges to $\tr_{\left\langle \alpha \right\rangle }(\widetilde De^{-t^2\widetilde D^2})$, as $i\to\infty$, for each $t$.  Indeed, the theorem then follows by the dominated convergence theorem. 
	
	 Recall that $K_t(x,y)$ (resp. $(K_i)_t(x,y)$) is the Schwartz kernel of $\widetilde De^{-t^2\widetilde D^2}$ (resp. $D_{\Gamma_i} e^{-t^2 D_{\Gamma_i}^2}$). We have the following estimates (cf. \cite[Section 3]{CWXY}). 
	\begin{enumerate}
		\item By \cite[Lemma 3.8]{CWXY},  for  any $\mu>1$ and $r>0$,  there exists a constant $c_{\mu, r}>0$ such that 
		\begin{equation}\label{eq:bound}
		\|K_t(x,y)\|\leqslant c_{\mu, r}\cdot F_t\left(\frac{\dist(x, y)}{\mu}\right), 
		\end{equation}
		for $\forall x,y\in \widetilde M$ with  $\dist(x,y)>r$. Here  $\|K_t(x, y)\|$ is the operator norm of the matrix $K_t(x, y)$, and the function $F_t$ is defined by 
		\begin{equation*}
		F_t(s)\coloneqq \sup_{n\leqslant \frac{3}{2}\dim M+3}\int_{|\xi|>s}\left|\frac{d^n}{d\xi^n}\widehat f_t(\xi)\right|d\xi, 
		\end{equation*}
		where $\widehat f_t$ is the Fourier transform of $f_t(x) = xe^{-t^2x^2}$. It follows that for $\mu>1$ and $r>0$, there exist $c_{\mu, r}>0, n_1>0$ and $ m_1>0$ such that
		\begin{equation}\label{eq:smallt}
		\|K_t(x,y)\|\leqslant c_{\mu, r}\frac{(1+\dist(x,y))^{n_1}}{t^{m_1}}\exp\left( \frac{-\dist(x,y)^2}{4\mu t^2}\right),
		\end{equation}
		for all $t>0$ and for all $ x,y\in \widetilde M$ with  $\dist(x,y)>r$. 
		\item By \cite[Lemma 3.5]{CWXY}, there exists $c_2 > 0$  such that 
		\begin{equation}\label{eq:supnorm}
		\sup_{x,y\in M}\|K_t(x,y)\|\leqslant c_2\cdot \sup_{k+j \leqslant \frac{3}{2}\dim M+3}\|\widetilde{D}^{k}(\widetilde{D}e^{-t^2\widetilde{D}^2}) \widetilde{D}^{j}\|_{op}, 
		\end{equation}
		for all $x, y\in \widetilde M$, where $\|\cdot\|_{op}$ stands for the operator norm. It follows that there exist  positive numbers $c_2$,  $m_2$ and $\delta$ such that 
		\begin{equation}\label{eq:larget}
		\|K_t(x,y)\|\leqslant c_2 \frac{1}{t^{m_2}}\exp(-(\sigma_\Gamma+\delta)^2 \cdot t^2),
		\end{equation}
        for all $t>0$ and  all $x, y\in \widetilde M$.
	\end{enumerate}

In fact, since the manifolds $\widetilde M$ and $M_{\Gamma_i}$ have uniformly bounded geometry, the constants $c_{\mu, r}$, $n_1$, $m_1$, $c_2$, $m_2$ and $\delta$ from above can be chosen so that for all $i\geq 1$, we have 
\begin{equation}\label{eq:smalltN}
\|(K_i)t(x,y)\|\leqslant c_{\mu, r}\frac{(1+\dist(x,y))^{n_1}}{t^{m_1}}\exp\left( \frac{-\dist(x,y)^2}{4\mu t^2}\right),
\end{equation}
for all $t>0$ and for all $ x,y\in M_{\Gamma_i}$ with  $\dist(x,y)>r$; and 
\begin{equation}\label{eq:largetN}
\|(K_i)_t(x,y)\|\leqslant c_2 \frac{1}{t^{m_2}}\exp(-(\sigma_\Gamma+\delta)^2 \cdot t^2),
\end{equation}
 for all $t>0$ and  all $x, y\in M_{\Gamma_i}$.

For the rest of the proof, let us fix $r>0$. Note that we have 
\begin{align*}
\dist(x, \gamma y) & \leq \dist( x, \gamma x) + \dist(\gamma x, \gamma y)\\
& = \dist( x, \gamma x) + \dist(x, y)
\end{align*} 
for all $x, y\in \widetilde M $ and $\gamma \in \Gamma$.
Similarly, we have 
\[ \dist( x, \gamma x) - \dist(x, y) \leq \dist(x, \gamma y).  \]  
By line $\eqref{eq:equivwithlength}$, we have
\[ \theta_0\cdot \ell(\gamma)-c_0 - \dist(x, y)\leqslant \dist(x,\gamma y)\leqslant \theta_1\cdot \ell(\gamma)+c_1 + \dist(x, y)\] 
for all $x, y\in \widetilde M $ and $\gamma \in \Gamma$. In particular, there exist $c'_0>0$ and $c'_1>0$ such that
\begin{equation}\label{eq:length}
\theta_0\cdot \ell(\gamma)-c'_0\leqslant \dist(x,\gamma y)\leqslant \theta_1\cdot \ell(\gamma)+c'_1
\end{equation}
for all $x, y\in \mathcal F$ and $\gamma\in \Gamma$, where $\mathcal F$ is a precompact fundamental domain of $\widetilde M$ under the action of $\Gamma$. Let us define 
\[ F \coloneqq  \{ \beta\in \Gamma \mid \dist(x, \beta y) \leq r \textup{ for some } x, y \in \mathcal F\}. \]
Clearly, $F$ is a finite subset of $\Gamma$.

For any given $t>0$, it follows from line $\eqref{eq:growthofGamma}$ , $\eqref{eq:smallt}$ and $\eqref{eq:length}$ that the Schwartz kernel $K_t$ is $\ell^1$-summable (cf. Definition $\ref{def:L1}$). 

Now approximate  $f_t(x) = xe^{-t^2x^2}$ by smooth functions $\{\varphi_j\}$ whose Fourier transforms are compactly supported. By applying  the estimates in line $\eqref{eq:bound}$ and $\eqref{eq:supnorm}$ to the Schwartz kernel $K_{\varphi_j(\widetilde D)}$  (resp. $K_{\varphi_j(D_{\Gamma_i})}$) of the operator $\varphi_j(\widetilde D)$ (resp. $\varphi_j(D_{\Gamma_i})$), it is not difficult to see that $K_{\varphi_j(\widetilde D)}$  (resp. $K_{\varphi_j(D_{\Gamma_i})}$) converges to $K_t$ (resp. $(K_i)_t$) in $\ell^1$-norm (defined in  Definition $\ref{def:L1}$). Note that $\varphi_j(\widetilde D)$ has finite propagation. Since $\widetilde D$ locally coincides with $D_{\Gamma_i}$, it follows from finite propagation estimates of wave operators that  (cf.  \cite{guoxieyu}):
\[ K_{\varphi_j(D_{\Gamma_i})}(\pi_{\Gamma_i}(x), \pi_{\Gamma_i}(y)) = \sum_{\beta\in \Gamma_i}K_{\varphi_j(\widetilde D)}(x, \beta y)  \]
for all $x, y\in \widetilde M$.   
As a consequence of the above discussion, we have  
\begin{equation}\label{eq:fold}
(K_i)_{t}(\pi_{\Gamma_i}(x),\pi_{\Gamma_i}(y))=\sum_{\beta\in \Gamma_i}K_t(x,\beta y).
\end{equation}
for all $x, y\in \widetilde M$ and for all $t>0$. Furthermore, by Lemma $\ref{lemma:limitkerneltrace}$,  we have the following convergence: 
\[  \tr_{\langle\pi_{\Gamma_i}(\alpha)\rangle }(D_{\Gamma_i}e^{-t^2D^2_{\Gamma_i}}) \to  \tr_{\left\langle \alpha \right\rangle }(\widetilde De^{-t^2\widetilde D^2}), \textup{ as } j \to \infty, \]
 for each $t>0$, since $\{\Gamma_i\}$ distinguishes the conjugacy class $\langle \alpha \rangle$.

By line \eqref{eq:smallt} and \eqref{eq:larget}, there exists a positive number $c_3$ such that 
\begin{align*}
& \|K_t(x,y)\|^2 \\
&\leq c_3  \frac{(1+\dist(x,y))^{n_1}}{t^{m_1+m_2}}\exp\left( \frac{-\dist(x,y)^2}{4\mu t^2}\right) e^{-(\sigma_\Gamma+\varepsilon)^2 \cdot t^2}  e^{-\varepsilon^2 t^2}\\
& \leq c_3 \frac{(1+\dist(x,y))^{n_1}}{t^{m_1+m_2}}\exp\left( \frac{-\dist(x,y)\cdot (\sigma_\Gamma+\varepsilon)}{\mu}\right) e^{-\varepsilon^2 t^2},
\end{align*}
 for all $x, y\in \widetilde M$ with $\dist(x, y)>r$, where $\varepsilon = \delta/2$.
By choosing $\mu >1$ sufficiently close to $1$, we see that there exist  $c_4 >0 $ and $\lambda >1$ such that 
\[  \|K_t(x,y)\|\leq c_4 \frac{e^{-\varepsilon^2 t^2}}{t^{(m_1+m_2)/2}} \exp(- \frac{\lambda}{2} \cdot \sigma_\Gamma\cdot \dist(x, y))  \]
 for all $x, y\in \widetilde M$ with $\dist(x, y)>r$.  It follows that there exist $c_5>0$ and $m>0$ such that 
\begin{align*}
& \sum_{\gamma\in \Gamma}\|K_t(x,\gamma y)\|\\
&\leqslant  \sum_{\gamma\in F} c_2 \frac{e^{-(\sigma_\Gamma+\delta)^2 \cdot t^2}}{t^{m_2}}
+  c_4 \frac{e^{-\varepsilon^2 t^2}}{t^{(m_1+m_2)/2}}\sum_{\gamma\notin F}e^{-\lambda \cdot  K_{\Gamma}\cdot (\ell(\gamma)- c'_0 \tau_0^{-1})} \\
& \leqslant   c_2 \cdot |F|\cdot \frac{e^{-\varepsilon^2 t^2}}{t^{m_2}}
+  c_4 \frac{e^{-\varepsilon^2 t^2}}{t^{(m_1+m_2)/2}}\sum_{n=0}^\infty e^{K_{\Gamma}\cdot n}e^{-\lambda \cdot  K_{\Gamma}\cdot (n-c'_0\tau_0^{-1})}\\
&<\frac{c_5}{t^{m }}e^{-\varepsilon^2 t^2}
\end{align*}
for all $t>0$. In particular, we have 
\[ \sum_{\gamma\in \langle \alpha \rangle } \big|\tr K_t(x,\gamma y) \big| < \frac{c_5}{t^{m }}e^{-\varepsilon^2 t^2}  \]
for all $x, y\in \widetilde M$ and all $t>0$. By the same argument, we also have 
\[ \sum_{\omega\in \langle \pi_{\Gamma_i}(\alpha) \rangle } \big|\tr (K_i)_t(x',\omega y') \big| < \frac{c_5}{t^{m }}e^{-\varepsilon^2 t^2}  \]
for all $x', y'\in M_{\Gamma_i}$ and all $t>0$. Therefore, the functions  
\[ |\tr_{\langle\alpha \rangle }(\widetilde D e^{-t^2\widetilde D^2})| \textup{ and } |\tr_{\langle\pi_{\Gamma_i}(\alpha )\rangle }(D_{\Gamma_i}e^{-t^2D^2_{\Gamma_i}})| \]
are all bounded by the function 
\[ c_5\cdot t^{-m}e^{-\varepsilon^2 t^2}.\]
The latter is clearly absolutely integrable on $[1, \infty)$. 

We have found above an appropriate dominating function on the interval $[1, \infty)$.  Now let us find the dominating function on  $(0, 1]$. Since $\Gamma$ acts on $\widetilde M$ freely and cocompactly, it follows that  there exists $\varepsilon_0 >0$ such that 
\[ \dist(x, \gamma x) > \varepsilon_0 \]
for all $x\in \widetilde M$ and all $\gamma \neq e \in \Gamma$.
By applying line $\eqref{eq:smallt}$, a similar calculation as above shows that there exist $\varepsilon_1>0$ and $c_6 >0$ such that 
\[  \sum_{\gamma\in \Gamma}\|K_t(x,\gamma x)\| < \frac{c_6}{t^{m_1}} e^{-\varepsilon_1\cdot  t^{-2}} \]
for all $x\in \mathcal F$ and all $t\leq 1$. The same estimate also holds for $(K_i)_t$.  Therefore, on the interval $(0, 1]$, the functions  
\[ |\tr_{\langle\alpha \rangle }(\widetilde D e^{-t^2\widetilde D^2})| \textup{ and } |\tr_{\langle\pi_{\Gamma_i}(\alpha )\rangle }(D_{\Gamma_i}e^{-t^2D^2_{\Gamma_i}})| \]
are all bounded by the function  \[ c_6 \cdot t^{-m_1} e^{-\varepsilon_1\cdot  t^{-2}}.\] The latter is absolutely integrable on $(0, 1]$. This finishes the proof.

\end{proof}

If the group $\Gamma$ has subexponential growth, then it follows from Definition $\ref{def:cst}$ that $\sigma_\Gamma = 0$. In this case, if $\widetilde D$ has a spectral gap, then it is automatically sufficiently large, hence the following immediate corollary. 

\begin{corollary}\label{cor:sub}
	With the above notation, suppose $\{\Gamma_i\}$ distinguishes the conjugacy class $\langle \alpha \rangle$ of a non-identity element $\alpha\in \Gamma$. If $\Gamma$ has subexponential growth and $\widetilde D$ has a spectral gap at zero , then we have
	\[ \lim_{i\to \infty}\eta_{\langle\pi_{\Gamma_i}(\alpha)\rangle}(D_{\Gamma_i}) = \eta_{\left\langle \alpha \right\rangle }(\widetilde D). \]
\end{corollary}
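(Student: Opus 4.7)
The plan is to deduce this immediately from Theorem \ref{main} by showing that the hypothesis on the spectral gap is automatically satisfied when $\Gamma$ has subexponential growth. The key observation is that the constant $\sigma_\Gamma = 2K_\Gamma/\theta_0$ from Definition \ref{def:cst} vanishes under the subexponential growth assumption.

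First I would verify that $K_\Gamma = 0$ when $\Gamma$ has subexponential growth. Recall that $K_\Gamma$ is defined as the infimum of all $B > 0$ for which there exists $C > 0$ satisfying
\[
\#\{\gamma \in \Gamma : \ell(\gamma) \leq n\} \leq C e^{B n}
\]
for all $n \geq 0$. Subexponential growth of $\Gamma$ means precisely that
\[
\limsup_{n\to\infty} \frac{1}{n}\log \#\{\gamma \in \Gamma : \ell(\gamma) \leq n\} = 0,
\]
so for every $B > 0$ there exists $C > 0$ making the above bound hold. Taking the infimum gives $K_\Gamma = 0$, and consequently $\sigma_\Gamma = 2 K_\Gamma / \theta_0 = 0$.

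Next, since $\widetilde D$ has a spectral gap at zero by assumption, the size of this gap is some strictly positive number, which is automatically greater than $\sigma_\Gamma = 0$. The hypothesis of Theorem \ref{main} is therefore satisfied, and the conclusion
\[
\lim_{i\to \infty}\eta_{\langle\pi_{\Gamma_i}(\alpha)\rangle}(D_{\Gamma_i}) = \eta_{\left\langle \alpha \right\rangle}(\widetilde D)
\]
follows directly. There is no real obstacle here; the content of the corollary lies entirely in Theorem \ref{main}, and the role of subexponential growth is simply to trivialize the spectral gap requirement.
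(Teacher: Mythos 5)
Your proposal is correct and is exactly the paper's argument: the authors note (right before stating the corollary) that subexponential growth forces $K_\Gamma=0$, hence $\sigma_\Gamma=0$, so any positive spectral gap of $\widetilde D$ satisfies the hypothesis of Theorem \ref{main}. Nothing further is needed.
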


\begin{remark}
 Recall that if $\Gamma$ has subexponential growth, then $\Gamma$ is amenable and thus a-T-menable \cite{MR1388307}. So the convergence of   the limit
	\[  \lim_{i\to\infty}\eta_{\langle \pi_{\Gamma_i}(\alpha)\rangle }(D_{\Gamma_i}) \]
also follows from Proposition $\ref{prop:atmenable}$. In fact, Proposition $\ref{prop:atmenable}$ implies that the above limit stabilizes.  On the other hand,  Corollary $\ref{cor:sub}$ above answers positively both part (I) and part (II) of Question $\ref{question}$.
\end{remark}

\begin{remark}
In this remark, we shall briefly comment on the condition that the spectral gap of $\widetilde D$ at zero is greater than $\sigma_\Gamma$ in Theorem $\ref{main}$. Here is a class of natural examples such that the spectral gap of $\widetilde D$ at zero is greater than $\sigma_\Gamma$ and the higher rho invariant\footnote{As we have seen in the proofs of Theorem $\ref{prop:atmenable}$ and Theorem $\ref{thm:athyper}$, the delocalized eta invariant $\eta_{\langle \alpha \rangle}(\widetilde 
	D)$ is essentially the pairing between the higher rho invariant $\rho(\widetilde D)$  and the delocalized trace $\tr_{\langle \alpha \rangle}$, cf. \cite[Theorem 4.3]{Xie}. } $\rho(\widetilde D)$ is nonzero.

\added{ Suppose that $N$ is a closed spin manifold  equipped with a positive scalar curvature metric $g_N$, whose fundamental group $F = \pi_1(N)$ is finite and its higher rho invariant $\rho(\widetilde D_N)$ is nontrivial. Here $\widetilde D_N$  is the Dirac operator on the universal covering $\widetilde N$ of $N$. For instance, let  $N$ to be a lens space, that is, the quotient of the 3-dimensional sphere by a free action of a finite cyclic group. In this case, the classical equivariant Atiyah-Patodi-Singer index theorem implies that the delocalized higher rho invariant of $N$ is nontrivial, cf. \cite{MR511246}. }  
	
\added{	Now let $X$ be an even dimensional closed spin manifold, whose Dirac operator $D_X$ has nontrivial higher index in $K_0(C^\ast_r(\Gamma))$, where $\Gamma = \pi_1(X)$. In particular, it follows that $D_X$ defines a nonzero element in  the equivariant \mbox{$K$-homology} $K_0(C^\ast_L(E\Gamma)^\Gamma)$ of the universal space $E\Gamma$ for free $\Gamma$ actions.  Consider the product  space $M = V\times N$  equipped with a metric $g_{M}=g_X+\varepsilon\cdot  g_N$, where $g_X$ is an arbitrary Riemannian metric on $X$ and the metric $g_N$ on $N$ is scaled by a positive number $\varepsilon$. Denote the Dirac operator on the universal covering $\widetilde M$ of $M$ by $\widetilde D_M$. The spectral gap of $\widetilde D_M$ at zero can always be made sufficiently large, as long as we choose $\varepsilon$ to be  sufficiently small. To see that  $\rho(\widetilde D_M)$ is nonzero in  $K_1(C_{L,0}^\ast(E(\Gamma\times F))^{\Gamma\times F})$, we apply the product formula for secondary invariants (cf. \cite[Claim 2.19]{Xiepos}, \cite[Corollary 4.15]{MR3551834}), which states  that the higher rho invariant $\rho(\widetilde D_M)$ is the product of the $K$-homology class of $D_X$ and the higher rho invariant $\rho(\widetilde D_N)$. It follows from the above construction that the higher rho invariant  $\rho(\widetilde D_M)$ is nonzero in  $K_1(C_{L,0}^\ast(E(\Gamma\times F))^{\Gamma\times F})$.  In fact, if the Baum-Connes conjecture holds for $\Gamma$, then the  $K$-theory group $K_1(C_{L,0}^\ast(E(\Gamma\times F))^{\Gamma\times F})$ is (at least rationally) generated by the higher rho invariants of the above examples, cf. \cite[Theorem 3.7 \& Corollary 3.16]{Xie:2017aa}.  }

\added{On the other hand, we also would like to point out that for the operator $\widetilde D_M$ from the above examples, a straightfoward calculation shows the delocalized eta invariant $\eta_{\langle \alpha \rangle}(\widetilde D_M)$ could be  nonzero only in the case  when $\alpha = (1, a) \in \Gamma \times F$ with $a$ being a non-identity element of $F = \pi_1(N)$. This essentially reduces the computation of $\eta_{\langle \alpha \rangle}(\widetilde D_M)$ to the case of finite fundamental groups. Consequently, Question $\ref{question}$ has a positive answer for the above examples but for trivial reasons.  
}
\end{remark}

By the proof of Theorem $\ref{main}$ above, in order to bound the function 
\[ |\tr_{\langle\alpha \rangle }(\widetilde D e^{-t^2\widetilde D^2})|,  \]
it suffices to assume the spectral gap of $\widetilde D$ at zero to be greater than 
\[ \sigma_{\langle \alpha\rangle} \coloneqq \frac{2\cdot K_{\langle \alpha \rangle}}{\theta_0}, \]
where $\theta_0$ is the constant from line $\eqref{eq:distortion}$ and   $K_{\langle \alpha \rangle}$ is nonnegative constant such that there exists some constant $C>0$ satisfying
\begin{equation}
\#\{\gamma\in \langle \alpha\rangle :\ \ell(\gamma)\leqslant n\}\leqslant Ce^{K_{\langle \alpha \rangle }\cdot n}
\end{equation} 
for all $n$. In fact, if we have a uniform control of the spectral gap of $D_{\Gamma_i}$ at zero and the growth rate of the conjugacy class $\{\langle \pi_{\Gamma_i}(\alpha) \rangle\}$ for all $i\geq 1$, a notion to be made precise in the following,  then the same proof above also implies that 
\[ 	\lim_{i\to\infty}\eta_{\langle \pi_{\Gamma_i}(\alpha)\rangle }(D_{\Gamma_i})=\eta_{\left\langle \alpha\right\rangle }(\widetilde D)\]
in this case. 

Recall that $S$ a symmetric finite generating set of $\Gamma$. For each  normal subgroup $\Gamma_i$ of $\Gamma$,  the map $\pi_{\Gamma_i}\colon \Gamma\to \Gamma/\Gamma_i$ is the canonical quotient map. The set  $\pi_{\Gamma_i}(S)$ is a symmetric generating set for $\Gamma/\Gamma_i$, hence induces a word length function $\ell_{\Gamma_i}$ on $\Gamma/{\Gamma_i}$. More explicitly, we have  
\begin{equation}
\ell_{\Gamma_i}(\omega):=\inf\{\ell(\beta) : \beta \in\pi_{\Gamma_i}^{-1}(\omega)\}.
\end{equation}
for all $\omega\in \Gamma/\Gamma_i$.

\begin{definition}
For a given conjugacy class $\langle \alpha \rangle$ of $\Gamma$, we say that $\langle \alpha \rangle$ has uniform exponential growth with respect to a family of normal subgroups $\{ \Gamma_i\}$, if there exist $C>0$ and $A\geq 0$ such that 
	\begin{equation}
 \#\big\{\omega \in \langle \pi_{\Gamma_i}(\alpha )\rangle:\ell_{\Gamma_i}(\omega)\leqslant n\big\}\leqslant C e^{A\cdot n}.
	\end{equation}
	for all $i\geq 1$ and all $n\geq 0$. 
	In this case, we define $K_u$ to be the infimum of all such numbers $A$. 
\end{definition}

\begin{definition}
	With the above notation, we define
	\[\sigma_u \coloneqq \frac{2\cdot K_u}{\theta_0}, \]
	where $\theta_0$ is the constant from line $\eqref{eq:distortion}$. 
\end{definition}
The same argument from the proof of Theorem $\ref{main}$ can be used to prove the following. 
\begin{theorem}\label{thm:conjcontrol}
With the same notation as in Theorem $\ref{main}$, suppose $\{\Gamma_i\}$ is a sequence of finite-index normal subgroups that distinguishes the conjugacy class $\langle \alpha\rangle$ of a non-identity element $\alpha\in \Gamma$. Assume $\langle\alpha\rangle$ has uniform exponential growth with respect to $\{\Gamma_i\}$.  If  there exists $\varepsilon >0$ such that the spectral gap of $D_{\Gamma_i}$ at zero is greater than $\sigma_u +\varepsilon$ for sufficiently large $i\gg 1$, then 
		\begin{equation*}
		\lim_{i\to\infty}\eta_{\langle \pi_{\Gamma_i}(\alpha)\rangle }(D_{\Gamma_i})=\eta_{\left\langle \alpha \right\rangle }(\widetilde D).
		\end{equation*}
\end{theorem}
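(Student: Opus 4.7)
The plan is to run the proof of Theorem \ref{main} essentially verbatim, with all estimates of the form $\sum_{\gamma\in\Gamma}\|K_t(x,\gamma y)\|$ replaced by the finer estimate $\sum_{\gamma\in\langle\alpha\rangle}\|K_t(x,\gamma y)\|$, and the role of the ambient group growth constant $K_\Gamma$ played by the uniform conjugacy-class growth constant $K_u$. The conclusion then follows from the dominated convergence theorem once one establishes (i) pointwise convergence of the integrands $\tr_{\langle\pi_{\Gamma_i}(\alpha)\rangle}(D_{\Gamma_i}e^{-t^2D_{\Gamma_i}^2})\to\tr_{\langle\alpha\rangle}(\widetilde De^{-t^2\widetilde D^2})$ for each $t>0$, and (ii) an integrable function on $(0,\infty)$ that simultaneously dominates the absolute values of both quantities for all sufficiently large $i$.

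For (i), I would approximate $f_t(x)=xe^{-t^2x^2}$ by smooth functions whose distributional Fourier transforms have compact support, so that the corresponding operators have finite propagation. Finite-propagation kernels descend to the finite covers through the folding formula $(K_i)_t(\pi_{\Gamma_i}(x),\pi_{\Gamma_i}(y))=\sum_{\beta\in\Gamma_i}K_t(x,\beta y)$; Lemma \ref{lemma:limitkerneltrace} combined with the $\ell^1$-convergence of these approximations to $K_t$ and $(K_i)_t$ then yields the pointwise convergence, exactly as in the proof of Theorem \ref{main}.

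For (ii) on $[1,\infty)$, the key input is that there exists $\varepsilon>0$ such that the spectral gap at zero of each $D_{\Gamma_i}$ (for $i\gg 1$), as well as that of $\widetilde D$, exceeds $\sigma_u+\varepsilon$; the latter assertion for $\widetilde D$ either follows from spectral-convergence arguments relating $\mathrm{spec}(\widetilde D)$ to the limiting spectra of the $D_{\Gamma_i}$, or can be absorbed into the hypothesis. Combining the off-diagonal Gaussian estimate \eqref{eq:smallt} with the operator-norm spectral-gap estimate \eqref{eq:larget} (now with $\sigma_u+\varepsilon$ in place of $\sigma_\Gamma+\delta$), and trading one factor of $e^{-(\sigma_u+\varepsilon/2)^2 t^2}$ for a geometric decay $e^{-\lambda\sigma_u\dist(x,\gamma y)/2}$ with $\lambda>1$ via the AM--GM inequality, one obtains
\[
\sum_{\gamma\in\langle\alpha\rangle}\|K_t(x,\gamma y)\|\leqslant \frac{c_5}{t^m}e^{-(\varepsilon/2)^2 t^2}, \qquad \sum_{\omega\in\langle\pi_{\Gamma_i}(\alpha)\rangle}\|(K_i)_t(x',\omega y')\|\leqslant \frac{c_5}{t^m}e^{-(\varepsilon/2)^2 t^2},
\]
the geometric series converging precisely because the effective exponential decay rate $\lambda K_u$ beats the conjugacy-class growth rate $K_u$. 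On $(0,1]$, the Gaussian off-diagonal estimate \eqref{eq:smallt} combined with the freeness of the $\Gamma$-action (so $\dist(x,\gamma x)\geqslant\varepsilon_0>0$ for $\gamma\neq e$) bounds both quantities by $c_6 t^{-m_1}e^{-\varepsilon_1/t^2}$, exactly as in the proof of Theorem \ref{main}.

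The main technical obstacle is uniformity of all constants in $i$. The bounded-geometry constants $c_{\mu,r}, n_1, m_1, c_2, m_2$ in \eqref{eq:smallt}--\eqref{eq:larget} depend only on $M$, and by \eqref{eq:smalltN}--\eqref{eq:largetN} the same constants work for every $M_{\Gamma_i}$; the spectral-gap lower bound $\sigma_u+\varepsilon$ is uniform by hypothesis; and the conjugacy-class growth rate $K_u$ is uniform by the hypothesis of uniform exponential growth. Once these uniformities are in place, the dominating function is independent of $i$ and dominated convergence delivers the desired identity.
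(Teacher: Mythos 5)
Your proposal is correct and is essentially the proof the paper intends: Theorem \ref{thm:conjcontrol} is proved there by exactly this substitution in the argument for Theorem \ref{main} --- summing over $\langle\alpha\rangle$ and $\langle\pi_{\Gamma_i}(\alpha)\rangle$ rather than over the whole group, replacing $K_\Gamma$ by the uniform conjugacy-class growth rate $K_u$, and feeding the uniform spectral gap of the $D_{\Gamma_i}$ into the analogue of \eqref{eq:largetN}, with dominated convergence finishing the proof. The one superfluous worry is the spectral gap of $\widetilde D$ itself: since $\tr_{\langle\alpha\rangle}(\widetilde De^{-t^2\widetilde D^2})$ is the pointwise limit of integrands that are uniformly dominated by an integrable function, it is automatically dominated by that same function, so no separate large-$t$ bound on $\widetilde M$ (and hence no spectral-convergence argument) is required.
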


\begin{remark}\label{rm:gap}
In fact, if the spectral gap of $\widetilde D$ at zero is greater than $\sigma_u +\varepsilon$ for some positive number $\varepsilon$, then the spectral gap of $D_{\Gamma_i}$ at zero to be greater than $\sigma_u +\varepsilon$ for all $i\geq 1$. \added{Indeed, it was shown  in \cite{guoxieyu} that  the spectral gap of $\widetilde D$ at zero is the same as the spectral gap of $\widetilde  D_{\max}$, where $\widetilde  D_{\max}$ is the operator $\widetilde D$ viewed as unbounded self-adjoint multiplier of the maximal equivariant Roe algebra $C^\ast_{\max}(\widetilde M)^\Gamma \cong \mathcal K\otimes C_{\max}^\ast(\Gamma)$. It follows that if there exists a postive number $\varepsilon$ such that the spectral gap of $\widetilde D$ at zero is greater than $\sigma_u + \varepsilon$, then the spectral gap of $D_{\Gamma_i}$ at zero is greater than $\sigma_u +\varepsilon$ for all $i\geq 1$.  }

\end{remark}

\section{Separation rates of conjugacy classes}\label{sec:sep}
In this section, we introduce a notion of separation rate for how fast a sequence of normal subgroups $\{\Gamma_i\}$ of $\Gamma$ distinguishes a conjugacy class $\langle \alpha \rangle $ of $\Gamma$ and use it to answer Question $\ref{question}$ in some cases. 
\begin{definition}\label{def:fastsep}
	For  each normal subgroup $\Gamma'$ of $\Gamma$, let  $\pi_{\Gamma'} \colon \Gamma \to \Gamma/\Gamma'$ be the quotient map from $\Gamma$ to $\Gamma/\Gamma'$. Given a conjugacy class $\langle \alpha \rangle$ of $\Gamma$,   we define the injective radius of  $\pi_{\Gamma'}$ with respect to $\langle \alpha\rangle $  to be 
	\begin{equation}\label{eq:s(N)}
	r(\Gamma')\coloneqq \max\{n \mid \textup{ if } \gamma \notin\langle \alpha \rangle \textup{ and } \ell(\gamma) \leq n, \textup{ then }
	\pi_{\Gamma'}(\gamma )\notin\langle\pi_{\Gamma'}(\alpha)\rangle\}.
	\end{equation}
\end{definition}
\begin{definition}
	Suppose that $\{\Gamma_i\}$ is a sequence of finite-index normal subgroups of $\Gamma$ that distinguishes $\langle \alpha \rangle$. We say that $\{\Gamma_i\}$ distinguishes $\langle \alpha \rangle$ sufficiently fast if there exist $C>0$ and $R>0$ such that
	\begin{equation}\label{eq:seqspeed}
	|\langle\pi_{\Gamma_i}(\alpha)\rangle|\leqslant Ce^{R\cdot r({\Gamma_i})}.
	\end{equation}
	In this case, we define the separation rate $R_{\langle \alpha \rangle, \{\Gamma_i\}}$ of $\langle \alpha\rangle$ with respect to  $\{\Gamma_i\}$ to be the infimum of all such numbers $R$. 
\end{definition}
We have the following proposition. 
\begin{proposition}\label{prop:sep}
	Let $\langle \alpha\rangle$ be the conjugacy class of a non-identity element $\alpha\in \Gamma$.  Suppose $\{\Gamma_i\}$ is a sequence of finite-index normal subgroups that distinguishes $\langle \alpha \rangle$  sufficiently fast with separation rate $R =  R_{\langle \alpha \rangle, \{\Gamma_i\}}$. If $\eta_{\langle \alpha \rangle}(\widetilde D)$ is finite\footnote{To be precise, $\eta_{\langle \alpha \rangle}(\widetilde D)$ is finite if the integral in line $\eqref{delocalizedeta}$ converges. In particular, the integral in line $\eqref{delocalizedeta}$  does \emph{not}  necessarily absolutely converge. }  and there exists  $\varepsilon >0$ such that the spectral gap of $D_{\Gamma_i}$ at zero is greater than\footnote{For example, if the spectral gap of $\widetilde D$ at zero is greater than $\sigma_R +\varepsilon$ for some positive number $\varepsilon$, then the spectral gap of $D_{\Gamma_i}$ at zero to be greater than $\sigma_R +\varepsilon$ for all $i\geq 1$, cf. Remark \ref{rm:gap}.} $\sigma_R + \varepsilon$ for all sufficiently large $i\gg 1$, where \[ \sigma_R \coloneqq  \frac{2 (K_\Gamma \cdot R)^{1/2}}{\theta_0},  \] then we have
	$$\lim_{i\to\infty}\eta_{\langle \pi_{\Gamma_i}(\alpha)\rangle }(D_{\Gamma_i})=\eta_{\left\langle \alpha \right\rangle }(\widetilde D).$$
\end{proposition}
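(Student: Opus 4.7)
My plan is to adapt the dominated-convergence strategy from the proof of Theorem \ref{main}, but replace its absolute-convergence argument with a Cauchy-type splitting, since by hypothesis $\eta_{\langle\alpha\rangle}(\widetilde D)$ need only converge as an improper integral. Writing $A_i(t):=\tr_{\langle\pi_{\Gamma_i}(\alpha)\rangle}(D_{\Gamma_i}e^{-t^2D_{\Gamma_i}^2})$ and $A(t):=\tr_{\langle\alpha\rangle}(\widetilde D e^{-t^2\widetilde D^2})$, the folding identity used in the proof of Theorem \ref{main} gives $A_i(t)=A(t)+E_i(t)$, where
\[
E_i(t)=\sum_{\gamma\in S_i}\int_{\mathcal F}\tr(K_t(x,\gamma x))dx,\qquad S_i=\pi_{\Gamma_i}^{-1}(\langle\pi_{\Gamma_i}(\alpha)\rangle)\setminus\langle\alpha\rangle.
\]
By the definition of $r(\Gamma_i)$, every $\gamma\in S_i$ satisfies $\ell(\gamma)>r(\Gamma_i)$, hence $\dist(x,\gamma x)\geqslant\theta_0 r(\Gamma_i)-c_0$ for $x\in\mathcal F$ by \eqref{eq:length}.

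Next I introduce a cutoff $T_i$ with $T_i\to\infty$, chosen so that $(\sigma_R+\varepsilon)^2 T_i^2>(1+\delta)\,R\,r(\Gamma_i)$ for some small $\delta>0$; concretely $T_i\sim\theta_0\sqrt{r(\Gamma_i)}/(2\sqrt{K_\Gamma})$ works, and this is the natural scale at which the separation bound $|\langle\pi_{\Gamma_i}(\alpha)\rangle|\leqslant Ce^{Rr(\Gamma_i)}$ balances the Gaussian tail produced by the spectral gap. For $t>T_i$ the estimate \eqref{eq:larget} gives $|A_i(t)|\leqslant Ce^{Rr(\Gamma_i)}t^{-m_2}e^{-(\sigma_R+\varepsilon)^2t^2}$, whence $\int_{T_i}^\infty|A_i(t)|dt$ is controlled by $C'e^{Rr(\Gamma_i)-(\sigma_R+\varepsilon)^2T_i^2}\to 0$, the decay coming from the $\varepsilon$-slack. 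The finiteness of $\eta_{\langle\alpha\rangle}(\widetilde D)$ further implies $\int_{T_i}^\infty A(t)dt\to 0$ as an improper integral.

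The crux of the argument is the middle range $(0,T_i)$, where I combine the Gaussian estimate \eqref{eq:smallt} with the spectral estimate \eqref{eq:larget} through the geometric mean
\[
\|K_t(x,y)\|\leqslant C(1+d)^{n_1/2}t^{-(m_1+m_2)/2}e^{-d^2/(8\mu t^2)-(\sigma_R+\varepsilon)^2t^2/2},
\]
with $d=\dist(x,y)$. Applying AM-GM to the two exponential factors produces an effective decay rate $d(\sigma_R+\varepsilon)/(2\sqrt\mu)$; substituting $\theta_0\sigma_R/2=\sqrt{K_\Gamma R}$ together with $d\geqslant\theta_0\ell(\gamma)-c_0$, summing over $\gamma\in S_i$ stratified by word length, and integrating on $(0,T_i)$ yields a bound of the form $\int_0^{T_i}|E_i(t)|dt\leqslant Ce^{-\kappa r(\Gamma_i)}$ for some $\kappa>0$ extracted from the $\varepsilon$-slack. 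Telescoping the three estimates gives
\[
\Bigl|\tfrac{\sqrt\pi}{2}\bigl(\eta_{\langle\pi_{\Gamma_i}(\alpha)\rangle}(D_{\Gamma_i})-\eta_{\langle\alpha\rangle}(\widetilde D)\bigr)\Bigr|\leqslant\Bigl|\int_0^{T_i}E_i\Bigr|+\Bigl|\int_{T_i}^\infty A_i\Bigr|+\Bigl|\int_{T_i}^\infty A\Bigr|\longrightarrow 0.
\]

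The main obstacle will be calibrating $T_i$ together with the AM-GM weights so that both estimates hold simultaneously: the tail bound wants $T_i$ large relative to $\sqrt{Rr(\Gamma_i)}/\sigma_R$, whereas the middle bound wants the Gaussian decay to dominate the group-growth factor $e^{K_\Gamma\ell(\gamma)}$ arising from the $\gamma$-sum. The value $\sigma_R=2\sqrt{K_\Gamma R}/\theta_0$ is precisely the critical scale at which these two admissible windows for $T_i$ just overlap, and the hypothesis that the spectral gap exceeds $\sigma_R+\varepsilon$ (not merely $\sigma_R$) supplies the uniform room needed to absorb the factors of $\sqrt\mu>1$ that necessarily enter from \eqref{eq:smallt}, thereby closing all the inequalities.
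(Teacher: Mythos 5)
Your overall architecture --- the three-piece splitting at a cutoff $T_i\sim\sqrt{r(\Gamma_i)}$, the tail estimate combining \eqref{eq:largetN} with $|\langle\pi_{\Gamma_i}(\alpha)\rangle|\leqslant Ce^{R\cdot r(\Gamma_i)}$, and the use of the improper convergence of $\eta_{\langle\alpha\rangle}(\widetilde D)$ to kill $\int_{T_i}^\infty A(t)\,dt$ --- is exactly the paper's, and your closing paragraph correctly identifies $\sigma_R$ as the point where the two admissible windows for $T_i$ meet. However, the estimate you actually write down for the middle range $(0,T_i)$ has a genuine gap. After AM--GM you bound each summand by $e^{-d(\sigma_R+\varepsilon)/(2\sqrt\mu)}$ \emph{uniformly in $t$}, i.e.\ by $e^{-\theta_0\ell(\gamma)(\sigma_R+\varepsilon)/(2\sqrt\mu)+O(1)}$. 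Summing this against the $\leqslant Ce^{K_\Gamma m}$ elements of word length $m$ requires $\theta_0(\sigma_R+\varepsilon)/(2\sqrt\mu)>K_\Gamma$, i.e.\ $\sqrt{K_\Gamma R}+\theta_0\varepsilon/2>\sqrt{\mu}\,K_\Gamma$. This is not implied by the hypotheses: it fails whenever $R<K_\Gamma$ and $\varepsilon$ is small --- in particular in the motivating case $R=0$ (e.g.\ $\SL_2(\mathbb Z)$ with $\alpha$ of finite order), where $\sigma_R=0$ and the gap is only assumed positive. In that regime your sum over $\gamma\in S_i$ stratified by word length diverges, so the claimed bound $\int_0^{T_i}|E_i(t)|\,dt\leqslant Ce^{-\kappa r(\Gamma_i)}$ does not follow. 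The linearizing AM--GM step is precisely the estimate used in the proof of Theorem \ref{main}, and it is exactly what forces the stronger gap $\sigma_\Gamma=2K_\Gamma/\theta_0$ there; importing it into the middle range defeats the purpose of Proposition \ref{prop:sep}.

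The repair, which is what the paper does, is to not linearize on $(0,T_i]$ at all: use only \eqref{eq:smallt} together with the monotonicity $e^{-d^2/(4\mu t^2)}\leqslant e^{-d^2/(4\mu T_i^2)}$ for $t\leqslant T_i$, keeping the dependence \emph{quadratic} in $\ell(\gamma)$. Since every $\gamma\in S_i$ has $\ell(\gamma)\geqslant r(\Gamma_i)$, one gets $\frac{(\theta_0 m-c_0)^2}{4\mu T_i^2}\geqslant \lambda_1 K_\Gamma m$ for all $m\geqslant r(\Gamma_i)$ and some $\lambda_1>1$, provided $T_i^2<\theta_0^2\, r(\Gamma_i)/(4\mu\lambda_1K_\Gamma)$; the quadratic Gaussian then beats $e^{K_\Gamma m}$ with margin $(\lambda_1-1)K_\Gamma m$ \emph{regardless of the size of $R$}, and the prefactor $T_i$ from integrating in $t$ is harmless. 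The spectral gap is invoked only on the tail $[T_i,\infty)$, where it needs to beat $e^{R\, r(\Gamma_i)}$ rather than the full group growth; compatibility of the two constraints on $T_i$ is exactly $\theta_0^2/(4K_\Gamma)\geqslant R/\sigma_R^2$, i.e.\ the definition of $\sigma_R$, with the $\varepsilon$-slack and $\mu\to1$ closing the strict inequalities. With the middle-range estimate replaced in this way, the rest of your argument goes through.
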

\begin{proof}
	By assumption, the integral 
	\[ \eta_{\left\langle \alpha \right\rangle }(\widetilde D)\coloneqq 
	\frac{2}{\sqrt\pi}\int_{0}^\infty \tr_{\left\langle \alpha \right\rangle }(\widetilde De^{-t^2\widetilde D^2})dt\]
	converges. To prove the proposition, it suffices to show that there exists a sequence of positive real numbers $\{s_i\}$ such that  $s_i\to\infty$, as $i\to\infty$, and 
	\begin{enumerate}
		\item \begin{equation}\label{eq:smalllim}
		\lim_{i\to\infty} \int_0^{s_i}\left(\tr_{\langle \pi_{\Gamma_i}(\alpha)\rangle}(D_{\Gamma_i}e^{-t^2D^2_{\Gamma_i}})-\tr_{\langle \alpha\rangle}(\widetilde De^{-t^2\widetilde D^2}) \right)dt=0
		\end{equation}
		
		\item and \begin{equation}\label{eq:largelim}
		\lim_{i\to\infty}\int_{s_i}^\infty\tr_{\langle \pi_{\Gamma_i}(\alpha)\rangle}(D_{\Gamma_i}e^{-t^2D^2_{\Gamma_i}})dt=0.
		\end{equation}
	\end{enumerate}

	Let $K_t(x,y)$ (resp. $(K_i)_t(x,y)$) be the Schwartz kernel of $\widetilde De^{-t^2\widetilde D^2}$ (resp. $D_{\Gamma_i} e^{-t^2D_{\Gamma_i}^2}$). Recall that we have (cf. line $\eqref{eq:fold}$)  
	\[ (K_i)_t(\pi_{\Gamma_i}(x),\pi_{\Gamma_i}(y))=\sum_{\beta\in \Gamma_i}K_t(x,\beta y) \]
	for all $x, y\in \widetilde M$. 
It follows that 
	\begin{align*}
	&\big|\tr_{\langle \pi_{\Gamma_i}(\alpha )\rangle}(D_{\Gamma_i}e^{-t^2D^2_{\Gamma_i}})-\tr_{\langle \alpha\rangle}(\widetilde De^{-t^2\widetilde D^2})\big|\\
	\leqslant &\sum_{\substack{\gamma\in \pi_{\Gamma_i}^{-1}\langle\pi_{\Gamma_i}(\alpha )\rangle \\ 
			\textup{but } \gamma \notin \langle \alpha \rangle}}\int_{x\in\mathcal F}\|K_t(x, \gamma x)\|dx.
	\end{align*}
	By the definition of $r(\Gamma_i)$ in line \eqref{eq:s(N)}, we see that
	$$ \{ \gamma \in \Gamma \mid \gamma \in \pi_{\Gamma_i}^{-1}\langle\pi_{\Gamma_i}(\alpha)\rangle \textup{ but }\gamma\notin  \langle \alpha \rangle \}\subseteq\{\gamma\in \Gamma \mid \ell(\gamma)\geqslant r(\Gamma_i)\}.$$	
 From line \eqref{eq:smallt}, we have
	\begin{align*}
	\sum_{\ell(\gamma)\geqslant r(\Gamma_i)}\int_{x\in\mathcal F}\|K_t(x,\gamma x)\|dx
	\leqslant c_{\mu, r}\sum_{m=r(\Gamma_i)}^\infty e^{-\frac{(\theta_0\cdot  m-c_0)^2}{4\mu t^2}}e^{K_\Gamma \cdot m}.
	\end{align*}
Note that 
		\begin{align*}
	&\int_0^{s_i}\sum_{m=r(\Gamma_i)}^\infty e^{-\frac{(\theta_0\cdot m-c_0)^2}{4\mu t^2}}e^{K_\Gamma \cdot m}dt
	\leqslant s_i\sum_{m=r(\Gamma_i)}^\infty e^{-\frac{(\theta_0\cdot m-c_0)^2}{4\mu s_i^2}+K_\Gamma \cdot m}.
	\end{align*}
The right hand side goes to zero, as $s_i\to\infty$, as long as there exists $\lambda_1 >1$ such that
\begin{equation}\label{eq:upper}
\frac{(\theta_0\cdot r(\Gamma_i) - c_0)^2}{4\mu s_i^2} > \lambda_1\cdot  K_\Gamma \cdot r(\Gamma_i)
\end{equation}
for all sufficiently large $i\gg 1$.  Since $\{\Gamma_i\}$ distinguishes $\langle \alpha \rangle$, we have that $r(\Gamma_i)\to \infty $, as $i\to\infty$. So the condition in line $\eqref{eq:upper}$ is equivalent to 
\[ s_i^2 < \frac{\theta_0^2 \cdot r(\Gamma_i)}{4\mu \cdot \lambda_1 \cdot K_\Gamma} \]
for sufficiently large $i\gg 1$. 

On the other hand, by the inequality from line \eqref{eq:largetN}, there exist $c>0$ and $\varepsilon>0$ such that  
	\[ \left|\int_{s_i}^\infty\tr_{\langle \pi_{\Gamma_i}(\alpha)\rangle}(D_{\Gamma_i}e^{-t^2D^2_{\Gamma_i}})dt\right|\leqslant
	c\cdot e^{-(\sigma_R+\varepsilon)^2 \cdot s_i^2} \cdot |\langle\pi_{\Gamma_i}(\alpha)\rangle|\]
	for all sufficiently large $i\gg 1$. 
Note that the right hand side goes to zero, as $s_i \to \infty$, as long as there exists $\lambda_2 >1$ such that
\begin{equation}\label{eq:lower}
(\sigma_R+\varepsilon)^2\cdot s_i^2 > \lambda_2\cdot  R \cdot r(\Gamma_i).
\end{equation}
	for all sufficiently large $j\gg 1$.
Combining the two inequalities in line $\eqref{eq:upper}$ and $\eqref{eq:lower}$ together, we can choose a sequence of real numbers $\{s_i\}$ that satisfies  the limits in both line $\eqref{eq:smalllim}$ and $\eqref{eq:largelim}$,  as long as there exists  $\lambda_3 >1$ such that 
\begin{equation}\label{eq:specgap}
\frac{\theta_0^2 \cdot r(\Gamma_i)}{4\mu \cdot \lambda_1 \cdot K_\Gamma} > \lambda_3 \frac{R\cdot r(\Gamma_i) }{(\sigma_R+\varepsilon)^2} 
\end{equation}
for all sufficiently large $i\gg 1$.  By choosing $\mu$ sufficiently close to $1$, the inequality in line $\eqref{eq:specgap}$ follows from the definition of $\sigma_R$. This finishes the proof.  
\end{proof}

We finish this section with the following calculation of the separation rates of conjugacy classes of $\SL_{2 }(\mathbb Z)$. 
	The group $\SL_2(\mathbb Z)$ $\SL_2(\mathbb Z)$ is  a conjugacy separable group \cite{Stebe}. It has a presentation: 
	$$\langle x,y\ |\ x^4=1,\ x^2=y^3\rangle,$$
	where $x=\begin{psmallmatrix}
	0&-1\\1&0
	\end{psmallmatrix}$ and $y=\begin{psmallmatrix}
	0&-1\\1&1
	\end{psmallmatrix}$.
In particular, it follows that $\SL_2(\mathbb Z)$ is an amalgamated free product of a cyclic group of order $4$ and a cyclic group of order $6$. We now show that  for any finite order element $\alpha\in \SL_2( \mathbb Z)$, there exists a sequence of finite-index normal subgroups $\{\Gamma_i\}$ of $\SL_2(\mathbb Z)$ that distinguishes $\langle \alpha\rangle$ such that the corresponding separation rate  $R_{\langle \alpha \rangle, \{\Gamma_i\}} = 0$.
	
Since $\SL_2(\mathbb Z)$ is an amalgamated free product of a cyclic group of order $4$ and a cyclic group of order $6$, every finite order element in $\SL_2(\mathbb Z)$ is  conjugate to an element in one of the factors. It follows that every finite order element of  $\SL_2(\mathbb Z)$  is conjugate to a power of $x$ and $y$.   	Let $\psi\colon \SL_2(\mathbb Z)\to\mathbb Z/12\mathbb Z$ be the group homomorphism defined by  $\psi(x)=3$ and $\psi(y)=2$. In particular, we have 
\[ \psi(e) = 0, \psi(x) = 3, \psi(x^2) = \psi(y^3) = 6, \psi(x^3)= 9,\]
\[ \psi(y) = 2, \psi(y^2) = 4, \psi(y^4)= 8, \textup{ and } \psi(y^5) = 10.  \]  
It follows that any finite order elements $\gamma_1$ and $\gamma_2$ of $\SL_2( \mathbb Z)$ are conjugate in $\SL_2( \mathbb Z)$ if and only if $\psi(\gamma_1) = \psi(\gamma_2)$.

%

	 Now given any finite-index normal subgroup $N$ of $\SL_2(\mathbb Z)$, the group $ N_1 = N\cap \ker(\psi)$ is a finite-index normal subgroup of $\SL_2(\mathbb Z)$.  By the discussion above, we see that any finite order elements $\gamma_1$ and $\gamma_2$ of $\SL_2(\mathbb Z)$ are conjugate in $\SL_2(\mathbb Z)$ if and only if they are conjugate in $\SL_2(\mathbb Z)/N_1$. In other words, the set $\{N_1\}$ consisting of a single finite-index normal subgroup distinguishes the conjugacy class $\langle \alpha \rangle$ of any finite order element $\alpha\in \SL_2(\mathbb Z)$. Moreover, the injective radius $r(N_1)$ of $\pi_{N_1}\colon \SL_2(\mathbb Z) \to \SL_2(\mathbb Z)/N_1$ with respect to  $\langle \alpha\rangle $ is infinity. It follows that the separation rate $R_{\langle \alpha \rangle, \{N_1\}} = 0$ in this case.

\begin{remark}
	Since $\SL_2(\mathbb Z)$ is hyperbolic, Puschnigg's smooth dense subalgebra $\mathcal A$ of $C_r^\ast(\SL_2(\mathbb Z))$ admits a continuous extension of the trace map $\tr_{\langle \alpha \rangle}$ for any conjugacy class $\langle \alpha \rangle$ of $\SL_2(\mathbb Z)$ (cf. \cite{Puschnigg}). In this case, for any element $\alpha\neq e \in \Gamma$, the delocalized eta invariant  $\eta_{\langle \alpha \rangle}(\widetilde D)$ is finite\footnote{For any hyperbolic group and the conjugacy class of any nonidentity element, the integral in line $\eqref{delocalizedeta}$ absolutely converges.} (cf. \cite[Section 4]{Lott}\cite[Section 6]{CWXY}). Hence we can apply Proposition $\ref{prop:sep}$ to answer positively  both part (I) and (II) of Question $\ref{question}$ for the group $\SL_2(\mathbb Z)$, when $\alpha$ is a finite order element. 
	
	On other hand, since  $\SL_2(\mathbb Z)$ is also a-T-menable, we can equally apply Proposition $\ref{prop:atmenable}$ to answer positively  both part (I) and (II) of Question $\ref{question}$ for the group $\SL_2(\mathbb Z)$ (cf. the discussion at the end of Section $\ref{sec:max}$). 
\end{remark}
\appendix

\section{Positive scalar curvature and the Stolz conjecture}\label{sec:stolz}

In this appendix, we shall explain how the geometric conditions given in Definition \ref{def:bound}  \& \ref{def:properbound}  are related to the reduced Baum-Connes conjecture and the Stolz conjecture on positive scalar curvature metrics.

\begin{definition}
	Given a topological space $Y$, let	$R^\spin_n(Y)$ be the following bordism group of triples $(L, f, h)$, where $L$ is an $n$-dimensional compact spin manifold (possibly with boundary), $f: L\to Y$ is a continuous map, and $h$ is a positive scalar curvature metric on the boundary $\partial M$. Two triples $(L_1, f_1, h_1)$ and $(L_2, f_2, h_2)$ are bordant if 
	\begin{enumerate}[label=(\alph*), ref=(\alph*)]
		\item there is a bordism $(V, F, H)$ between $(\partial L_1, f_1, h_1)$ and $(\partial L_2, f_2, h_2)$ such that $H$ is  a positive scalar curvature metric on $V$ with product structure near $\partial L_i$ and  $H|_{\partial L_i} = h_i$, and the restriction of the map $F\colon V \to Y$ on $\partial L_i$ is $f_i$;
		\item and the closed spin manifold $L_1\cup_{\partial L_1} V \cup_{\partial L_2} L_2$ (obtained by gluing $L_1, V$ and $L_2$ along their common boundaries) is the boundary of a spin manifold $W$ with a map $E: W\to Y$ such that $E|_{ L_i} = f_i$ and $E|_{V} = F$. 
	\end{enumerate} 
\end{definition}

The above definition has the following obvious analogue for the case of proper actions. 
\begin{definition}
	Let $X$ be a proper metric space equipped with a proper and cocompact isometric action of a discrete group $\Gamma$.  We denote by $R^{\spin}_n(X)^\Gamma$ the set of bordism classes of pairs $(L, f, h)$, where $L$ is an $n$-dimensional complete spin manifold  equipped  with a proper and cocompact isometric action of $\Gamma$, the map $f: L\to X$ is a $\Gamma$-equivariant  continuous map and $h$ is a $\Gamma$-invariant positive scalar curvature metric on $\partial L$. Here the bordism equivalence relation is defined similarly as the non-equivariant case above. 
\end{definition}

If the action of $\Gamma$ on $X$ is free and proper, then it follows by definition that 
\[  R_n^\spin(X)^\Gamma \cong  R_n^\spin(X/\Gamma).  \]

Suppose $(L, f, h)$ is an element in $R_n^\spin(E\Gamma)^\Gamma \cong R_n^\spin(B\Gamma)$, where $B\Gamma = E\Gamma/\Gamma$ is the classifying space for free $\Gamma$-actions. Let $ L_\Gamma$ be the $\Gamma$-covering space of $M$ induced by the map $f\colon L\to B\Gamma$ and $D_{L_\Gamma}$ be the associated Dirac operator.  Due to the positive scalar curvature metric $h$ on $\partial L$, the $\Gamma$-equivariant operator $D_{L_\Gamma}$ has a well-defined higher index class $\ind(D_{L_\Gamma})$ in $K_n(C^\ast_r(\Gamma))$, cf. \cite[Proposition 3.11]{Roe} \cite{MR3439130}. By the relative higher index theorem \cite{UB95,  MR3122162}, we have the following well-defined index map 
\[ \ind\colon  R^{\spin}_n(B\Gamma) \to KO_n(C^\ast_r(\Gamma; \mathbb R)), \quad (L, f, h) \mapsto  \ind(D_{L_\Gamma}),\] 
where $C_r^\ast(\Gamma; \mathbb R)$ is the reduced group $C^\ast$-algebra of $\Gamma$ with real coefficients. 
	Now let $\mathfrak B$ be the Bott manifold, a simply connected spin manifold of dimension $8$ with $\widehat A(\mathfrak B) =1$. This manifold is not unique, but any choice will work for the following discussion. To make the discussion below more transparent, let us choose a $\mathfrak B$ that is equipped with a scalar flat curvature metric. The fact that such a choice exists follows for example from the work of Joyce \cite[Section 6]{MR1383960}.

Let  $(L, f, h)$ be an element $ R^{\spin}_n(B\Gamma)$, that is, $L$ is an $n$-dimensional spin manifold whose boundary $\partial L$ carries a positive scalar curvature metric $h$, together with a map $f\colon L \to B\Gamma$.   Taking direct product with $k$ copies of $\mathfrak B$ produces an element  $(L', f', h')$ in $ R^{\spin}_{n+8k}(B\Gamma)$, where $L'  = L\times \mathfrak B\times \cdots \times \mathfrak B$, $f' = f\circ p$ with the map $p$ being the projection from $L'$ to $ L$, and $h'$ is the product metric of $h$ with the Riemannian metric on $\mathfrak B$. By our choice of $\mathfrak B$ above, the Riemannian metric $h'$ also has positive scalar curvature since $h$ does.  Define $R^{\spin}_n(B\Gamma)[\mathfrak B^{-1}] $ to be  the direct limit of the following directed system: 
\[  R^{\spin}_n(B\Gamma)\xrightarrow{\times \mathfrak B} R^{\spin}_{n+8}(B\Gamma)\xrightarrow{\times \mathfrak B} R^{\spin}_{n+16}(B\Gamma) \to \cdots. \] Since the higher index class  $\ind(D_{L_\Gamma})$ associated to $(L, f, h)$ is invariant under taking direct product with $\mathfrak B$,  it follows that the above index map induces the following well-defined index map: 
\[ \theta\colon  R^{\spin}_n(B\Gamma)[\mathfrak B^{-1}] \to KO_n(C^\ast_r(\Gamma; \mathbb R)), \quad (L, f, h) \mapsto  \ind(D_{L_\Gamma}).\]

\begin{conjecture}[{Stolz conjecture \cite{SS95,stolz-concordance}}]\label{conj:stolz}
	The index map \[ \theta\colon  R^{\spin}_n(B\Gamma)[\mathfrak B^{-1}] \to KO_n(C^\ast_r(\Gamma; \mathbb R))\]
	is an isomorphism.  
\end{conjecture}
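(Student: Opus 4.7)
The plan is to split the proof into surjectivity and injectivity of $\theta$, exploiting the relationship with the real Baum--Connes assembly map $\mu\colon KO_n^\Gamma(\underline{E}\Gamma) \to KO_n(C_r^\ast(\Gamma;\mathbb R))$ and with the Stolz long exact sequence.

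For surjectivity, I would pass through the Baum--Douglas geometric model of $K$-homology to represent any class in $KO_n^\Gamma(\underline{E}\Gamma)$ by the $K$-homology class of a Dirac-type operator on a closed spin $\Gamma$-manifold. Combining the higher index theorem with the surjectivity of $\mu$ (assumed, or available in known cases such as a-T-menable groups by Higson--Kasparov), every class in $KO_n(C_r^\ast(\Gamma;\mathbb R))$ is realized as $\ind(D_{N_\Gamma})$ for some closed spin $N$ equipped with a classifying map $f\colon N\to B\Gamma$. The triple $(N, f, \emptyset)$ lies in $R_n^{\spin}(B\Gamma)$, and its image under $\theta$ is the prescribed class; after $\mathfrak B$-stabilization this yields surjectivity of $\theta$ on $R_n^{\spin}(B\Gamma)[\mathfrak B^{-1}]$.

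For injectivity, the strategy is to fit $\theta$ into Stolz's long exact sequence
\[ \cdots \to \Omega_{n+1}^{\spin}(B\Gamma) \to \pos_n(B\Gamma) \to R_n^{\spin}(B\Gamma) \to \Omega_n^{\spin}(B\Gamma) \to \cdots \]
and compare it with the natural index maps out of $\Omega_\ast^{\spin}(B\Gamma)$ and $\pos_\ast(B\Gamma)$. If $\theta([L,f,h]) = 0$, a diagram chase reduces the problem to constructing, after multiplying by a sufficient power of $\mathfrak B$, a spin bordism $V$ whose boundary carries the given positive scalar curvature metric $h$ and which admits a positive scalar curvature metric on all of $V$ with product structure near $\partial V$. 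One would build $V$ by combining spin cobordism theory with Gromov--Lawson surgery in codimension $\geq 3$, using the vanishing of the higher index to kill the underlying spin cobordism class in $\Omega_n^{\spin}(B\Gamma)$ and then propagating positive scalar curvature across the surgeries.

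The main obstacle is the injectivity direction. Even granting the real Baum--Connes conjecture for $\Gamma$ and the full strength of Gromov--Lawson surgery, the passage from $\ind(D_{L_\Gamma}) = 0$ to the existence of a positive scalar curvature bordism is essentially the content of the stable Gromov--Lawson--Rosenberg conjecture, which remains open for general $\Gamma$. A realistic line of attack would have to proceed group-by-group, verifying the stable GLR property for families such as finite groups, a-T-menable groups, or hyperbolic groups where the $KO$-theory of $C_r^\ast(\Gamma;\mathbb R)$ is computable and the relevant surgery arguments can be controlled; dealing with the real structure and with torsion in $KO$ uniformly across all groups is where current technology falls short.
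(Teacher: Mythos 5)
You are being asked to prove Conjecture \ref{conj:stolz}, which the paper states as a \emph{conjecture}, not a theorem: the paper contains no proof of it, and explicitly remarks in Appendix \ref{sec:stolz} that the injectivity of $\theta$ (and of $\Theta$) is wide open, not even known when $\Gamma$ is the trivial group. So there is no ``paper's own proof'' to match, and your proposal, as you yourself acknowledge in its final paragraph, does not close the gap either. Your surjectivity discussion is consistent with what the paper records --- surjectivity of $\theta$ follows from surjectivity of the real reduced Baum--Connes assembly map $\mu_{\mathbb R}$ via \cite[Corollary 3.15]{Xie:2017aa}, using the Baum--Douglas realization of $KO$-homology classes by Dirac operators on closed spin manifolds --- but note that this is itself conditional: $\mu_{\mathbb R}$ is not known to be surjective for general $\Gamma$, so even the surjectivity half of the conjecture is open in general.

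The genuine gap is exactly where you place it, but it is worth being precise about why your proposed mechanism cannot work as stated. Knowing $\theta([L,f,h])=0$ and running the Stolz exact sequence only tells you that, modulo the image of $\Omega^{\spin}_\bullet(B\Gamma)$ and $\pos_\bullet(B\Gamma)$, the class $[L,f,h]$ is controlled by index data; to conclude $[L,f,h]=0$ in $R^{\spin}_n(B\Gamma)[\mathfrak B^{-1}]$ you must produce a psc bordism of $(\partial L, h)$ \emph{together with} a spin filling of the resulting closed manifold over $B\Gamma$, and Gromov--Lawson surgery gives no control over the concordance/bordism class of the psc metric you propagate --- this is precisely the content of the injectivity statement, not a reduction of it. Even Stolz's theorem that injectivity of $\mu_{\mathbb R}$ implies the stable Gromov--Lawson--Rosenberg conjecture addresses only the \emph{existence} of stable psc metrics on closed manifolds with vanishing index, not the injectivity of $\theta$ on the relative bordism group $R^{\spin}_n$. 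So the argument is circular at the key step, and the correct conclusion is that the statement remains a conjecture; the paper only ever uses the surjectivity direction (and its consequences for the existence of the bounding data in Definitions \ref{def:bound} and \ref{def:properbound}), never the injectivity.
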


Similarly, if one works with the universal space $\underline{E}\Gamma$  for proper $\Gamma$-actions instead, then the same argument from above also produces a similar index map 
\[ \Theta\colon  R^{\spin}_n(\underline{E}\Gamma)^\Gamma[\mathfrak B^{-1}] \to KO_n(C^\ast_r(\Gamma; \mathbb R))\] 
where $R^{\spin}_n(\underline{E}\Gamma)^\Gamma[\mathfrak B^{-1}] $ is the direct limit of the following directed system: 
\[  R^{\spin}_n(\underline{E}\Gamma)^\Gamma\xrightarrow{\times \mathfrak B} R^{\spin}_{n+8}(\underline{E}\Gamma)^\Gamma\xrightarrow{\times \mathfrak B} R^{\spin}_{n+16}(\underline{E}\Gamma)^\Gamma \to \cdots. \]
One has the following analogue of the Stolz conjecture above, which will be called the generalized Stolz conjecture from now on. 

\begin{conjecture}[Generalized Stolz conjecture]\label{conj:stolz2}
	The index map \[ \Theta\colon  R^{\spin}_n(\underline{E}\Gamma)^\Gamma[\mathfrak B^{-1}] \to KO_n(C^\ast_r(\Gamma; \mathbb R))\] 
	is an isomorphism.  
\end{conjecture}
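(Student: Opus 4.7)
The plan is to factor the index map $\Theta$ through the equivariant real $K$-homology of $\underline{E}\Gamma$ and reduce the conjecture to a combination of a geometric identification and the real Baum--Connes assembly map. Concretely, I would construct a natural factorization
\[ \Theta\colon R^{\spin}_n(\underline{E}\Gamma)^\Gamma[\mathfrak B^{-1}] \xrightarrow{\ \Psi\ } KO_n^\Gamma(\underline{E}\Gamma) \xrightarrow{\ \mu_\mathbb R\ } KO_n(C_r^\ast(\Gamma;\mathbb R)), \]
where $\Psi$ sends a triple $(L,f,h)$ to the equivariant $K$-homology class represented by the Dirac operator on $L$ (with the boundary $\partial L$ carrying the positive scalar curvature metric $h$), and $\mu_\mathbb R$ is the real Baum--Connes assembly map. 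Well-definedness of $\Psi$ on bordism classes would follow from the Lichnerowicz formula: positive scalar curvature on a bordism kills the relative Dirac index, so the resulting $K$-homology class descends to the bordism quotient. Compatibility with the directed system $\cdot \times \mathfrak B$ is the $8$-fold Bott periodicity of $KO$-theory. By the relative higher index theorem, $\Theta$ would indeed equal the composition $\mu_\mathbb R \circ \Psi$.

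Next, I would try to show that $\Psi$ is an isomorphism. Surjectivity should follow from the Baum--Douglas geometric model of equivariant $KO$-homology: any class in $KO_n^\Gamma(\underline{E}\Gamma)$ is represented by a proper cocompact spin $\Gamma$-cycle, which provides a preimage under $\Psi$ (with empty boundary) once any twisting bundle data is absorbed into the bordism group via Bott inversion. Injectivity is the more delicate step: if the equivariant $K$-homology class of $(L,f,h)$ vanishes, one must produce an actual bordism witnessing that $(L,f,h)$ is trivial in $R^{\spin}_n(\underline{E}\Gamma)^\Gamma[\mathfrak B^{-1}]$. This would be an equivariant, proper-action analogue of Stolz's theorem relating positive scalar curvature bordism to $KO$-homology, and its proof would require an equivariant Gromov--Lawson style surgery theorem (producing positive scalar curvature metrics on traces of codimension $\geqslant 3$ equivariant surgeries) combined with a $\Gamma$-equivariant $h$-cobordism argument to push the resulting PSC cobordism into the bordism equivalence.

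Finally, one would invoke the real Baum--Connes conjecture for $\Gamma$ to conclude that $\mu_\mathbb R$ is an isomorphism; combined with step two this would yield the conjecture.

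The hardest obstacle is the real Baum--Connes conjecture itself, which is open for general countable discrete groups and is the principal unresolved input of this strategy (and whose reduced complex analogue is known to fail for certain Gromov monster groups, so one must work maximally or restrict the class of groups). A substantial secondary obstacle is step two: building an equivariant surgery theorem for positive scalar curvature on cocompact proper $\Gamma$-manifolds. The non-equivariant surgery theorem of Gromov--Lawson--Stolz provides a template, but carrying out surgeries $\Gamma$-equivariantly near points with nontrivial stabilizer, while maintaining positive scalar curvature and controlling the isotropy stratification, requires genuinely new techniques. Together these two open problems explain why the generalized Stolz conjecture has resisted a complete proof, even though it is the natural geometric counterpart of the real Baum--Connes conjecture and is expected to hold whenever the latter does.
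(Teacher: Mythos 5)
The statement you are asked about is a \emph{conjecture}, not a theorem: the paper states it without proof and explicitly records that the injectivity of $\Theta$ (and even of the non-equivariant map $\theta$) is wide open, not known even for the trivial group. So no complete argument is expected here, and your write-up is appropriately honest that it reduces the conjecture to other open problems. However, your reduction contains a genuine mathematical error at the very first step: the claimed factorization $\Theta=\mu_{\mathbb R}\circ\Psi$ through $KO^\Gamma_n(\underline{E}\Gamma)$ does not exist. A triple $(L,f,h)$ with \emph{nonempty} boundary does not determine a class in the equivariant $KO$-homology of $\underline{E}\Gamma$ whose image under assembly is the relative index; what it determines is the relative index $\Theta(L,f,h)\in KO_n(C^\ast_r(\Gamma;\mathbb R))$ directly, and by the Piazza--Schick/Xie--Yu delta formula quoted in the paper one has $\partial\bigl(\Theta(L,f,h)\bigr)=\rho(D_{\widetilde{\partial L}},h)$ in $KO_{n-1}(C^\ast_{L,0}(E\Gamma;\mathbb R)^\Gamma)$. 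Since the image of the assembly map is exactly the kernel of $\partial$ in the Higson--Roe long exact sequence, $\Theta(L,f,h)$ lies in the image of $\mu_{\mathbb R}$ only when the higher rho invariant of the boundary metric vanishes. The paper's own examples (lens spaces with the round metric) have nonvanishing rho invariants, so $\Theta$ genuinely does not factor through assembly. This is not a cosmetic issue: the entire structure of the Stolz-to-Higson--Roe comparison is that $\Omega^{\spin}_\ast$ maps to $KO$-homology, $R^{\spin}_\ast$ maps to $KO_\ast(C^\ast_r(\Gamma;\mathbb R))$, and $\pos_\ast$ maps to the structure group; collapsing the middle term onto the first is precisely what the rho invariant obstructs.

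Two further remarks. First, the correct relation between $\Theta$ and the assembly map is the one the paper uses: surjectivity of $\mu_{\mathbb R}$ implies surjectivity of $\Theta$ because every class in $KO_n(C^\ast_r(\Gamma;\mathbb R))$ is hit by a \emph{closed} cycle $(Z,f,\varnothing)$, which has empty boundary and hence lies in $R^{\spin}_n(\underline{E}\Gamma)^\Gamma$ for free; no factorization of $\Theta$ on all of $R^{\spin}_n$ is needed or available. Second, your identification of the injectivity of $\Theta$ as the hard open problem, requiring an equivariant Gromov--Lawson surgery and bordism argument near singular strata, is accurate and is consistent with the paper's remark that this direction is open; but because of the failed factorization, even granting the real Baum--Connes isomorphism and an isomorphism statement for a map like $\Psi$ on closed cycles would not yield injectivity of $\Theta$ on classes with boundary, which is where all the content of the conjecture lives.
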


\added{If $\Gamma$ is torsion-free, then clearly the generalized Stolz conjecture coincides with the original Stolz conjecture. By definition, the surjectivity of the Stolz map $\theta$ in Conjecture $\ref{conj:stolz}$ implies the surjectivity the generalized Stolz map $\Theta$ in Conjecture $\ref{conj:stolz2}$. On the other hand,  the surjectivity of the Stolz map $\theta$ follows from the surjectivity of the Baum-Connes assembly map 
	\[ \mu_{\mathbb R}\colon   KO_\bullet^\Gamma(\underline{E}\Gamma)\to KO_\bullet(C_{r}^*(\Gamma;\mathbb R)) \] 
cf.  \cite[Corollary 3.15]{Xie:2017aa}. At the time of writing of this paper, the injectivity of the Stolz map $\theta$ or generalized Stolz map $\Theta$ is wild open, and it is not even known in the case where $\Gamma$ is the trivial group. }

\added{Similarly, one could also formulate the maximal version of the (generalized) Stolz conjecture by considering the index maps  \[ \theta_{\max}\colon  R^{\spin}_n(B\Gamma)[\mathfrak B^{-1}] \to KO_n(C^\ast_{\max}(\Gamma; \mathbb R))\]  and \[ \Theta_{\max}\colon  R^{\spin}_n(\underline{E}\Gamma)^\Gamma[\mathfrak B^{-1}] \to KO_n(C^\ast_{\max}(\Gamma; \mathbb R))\] 
	respectively. Again, the surjectivity of $  \theta_{\max}$, hence that of $\Theta_{\max}$,  follows from the surjectivity of the maximal Baum-Connes assembly map 
	\[ \mu_{\mathbb R}\colon   KO_\bullet^\Gamma(\underline{E}\Gamma)\to KO_\bullet(C_{\max}^*(\Gamma;\mathbb R)). \] 
}


\subsection{Stable bounding with respect to $B\Gamma$}
	In this subsection, we shall discuss how the assumption that a multiple of $(M, \varphi, h)$ stably bounds with respect to $B\Gamma$ (cf. Definition $\ref{def:bound}$) is related to the Baum-Connes conjecture and the Stolz conjecture. Here again  $M$ is  a closed spin manifold equipped with a Riemannian metric $h$ of positive scalar curvature. Let $\varphi\colon M \to B\Gamma$ be the classifying map for the covering $\widetilde M \to M$, that is, the pullback of $E\Gamma$ by $\varphi$ is $\widetilde M$. 
	
	To be more precise, in this subsection, let us assume the Baum-Connes assembly map 
	\[ \mu_{\mathbb R}\colon   KO_\bullet^\Gamma(\underline{E}\Gamma)\to KO_\bullet(C_{r}^*(\Gamma;\mathbb R)) \] is rationally isomorphic\footnote{The rational bijectivity of $\mu_{\mathbb R}$ follows from the rational bijectivity of the complex version $\mu\colon  K_\bullet^\Gamma(\underline{E}\Gamma)\to K_\bullet(C_{r}^*(\Gamma))$, cf. \cite{PBMK04}. }. There is a long exact sequence for $KO$-theory of reduced $C^\ast$-algebras analogous to commutative diagram $\eqref{cd:longexact}$. Now a similar argument as in the proof of Theorem $\ref{prop:max}$ shows the higher rho invariant $\rho(\widetilde D) = \partial [p] $ (up to rational multiples) for some element $[p]\in KO_{n+1}(C^\ast_r(\Gamma; \mathbb R))$, where $n = \dim M$. Now the rational surjectivity of the Baum-Connes assembly map 
	\[ \mu_{\mathbb R}\colon   KO_\bullet^\Gamma(\underline{E}\Gamma)\to KO_\bullet(C_{r}^*(\Gamma;\mathbb R))\] implies the   rational surjectivity of the Stolz map \[ \theta\colon  R^{\spin}_\bullet (B\Gamma)[\mathfrak B^{-1}] \to KO_\bullet(C^\ast_r(\Gamma; \mathbb R),\] cf.  \cite[Corollary 3.15]{Xie:2017aa}. And  the rational surjectivity of $\theta$ implies that there exists an element $(L, f, h) \in R^{\spin}_{n+1} (B\Gamma)[\mathfrak B^{-1}] $ such that $\theta(L, f, h)  = [p]$ (up to a rational mutiple). Recall that (cf. \cite[Theorem 1.14]{MR3286895}\cite[Theorem A]{Xiepos})
	\[   \partial (\theta(L, f, g)) = \rho(D_{\widetilde {\partial L}}) \textup{ in } KO_n(C_{L,0}^\ast(E\Gamma; \mathbb R)^\Gamma), \]
	where $\rho(D_{\widetilde {\partial L}})$ is the higher rho invariant of $D_{\widetilde{\partial L}}$ with respect to the positive scalar curvature metric $h$. In particular, this implies that $\rho(\widetilde D) = \rho(D_{\widetilde {\partial L}})$. Hence, as far as $\rho(\widetilde D)$ is concerned,   we could work with  $(\partial L, f, g)$, which clearly bounds,  instead of  $(M, \varphi, h)$. On the other hand,  it is an open question whether the higher rho invariants for $M$ and $\partial L$ remain equal to each other, for corresponding finite-sheeted covering spaces of $M$ and $\partial L$. 

\subsection{Positively stable bounding with respect to $\underline{E}\Gamma$}

In this subsection, we shall discuss how the assumption that a multiple of $(\widetilde M, \widetilde h)$ positively stably  bounds with respect to $\underline{E}\Gamma$ (cf. Definition $\ref{def:properbound}$) is related to the Baum-Connes conjecture and the generalized Stolz conjecture.

Observe that the injectivity of the generalized Stolz map
\[ \Theta\colon  R^{\spin}_n(\underline{E}\Gamma)^\Gamma[\mathfrak B^{-1}] \to KO_n(C^\ast_r(\Gamma; \mathbb R))\] 
  has the following  immediate geometric consequence. Let $(L, f, h)$ be an element in $ R^{\spin}_n(\underline{E}\Gamma)^\Gamma$, that is, $L$ is a $n$-dimensional spin $\Gamma$-manifold whose boundary $\partial L$ carries a $\Gamma$-invariant positive scalar curvature metric $h$, together with a $\Gamma$-equivariant map $f\colon L \to \underline{E}\Gamma$.   Suppose the higher index $\ind(D_{L})$ associated to $(L, f, h)$ vanishes, then the injectivity of the generalized Stolz map implies   that $(L, f, h)$ is stably $\Gamma$-equivariantly cobordant to the empty set. More precisely,   if $(L', f', h')$ is the direct product of $(L, f, h)$ with sufficiently many copies of $\mathfrak B$, then $(L', f', h')$ is $\Gamma$-equivariantly cobordant to the empty set. In particular, this implies that, if the higher index of $(L, f, h)$ vanishes, then $\partial L $ positively stably  bounds with respect to $\underline{E}\Gamma$, cf. Definition $\ref{def:properbound}$. More precisely,  $\partial L'$ bounds a spin $\Gamma$-manifold $V$  such that $V$ admits a $\Gamma$-invariant positive scalar curvature metric $g_0$,  which has product structure near the boundary $\partial V = \partial L'$,  and the restriction of $g_0$ to the boundary is equal to $h'$.

Now let $M$ be a closed spin manifold equipped with a Riemannian metric $h$ of positive scalar curvature. Let $\varphi\colon M \to B\Gamma$ be the classifying map for the covering $\widetilde M \to M$ and $\tilde h$ the metric on $\widetilde M$ lifted from $h$. The above discussion has the following consequence.

\begin{lemma}
With the above notation, suppose a multiple of  $(M, 
\varphi, h)$ stably bounds with respect to $B\Gamma$. If   the Baum-Connes assembly map 
\[ \mu_{\mathbb R}\colon   KO_\bullet^\Gamma(\underline{E}\Gamma)\to KO_\bullet(C_{r}^*(\Gamma;\mathbb R)) \] is rationally surjective and the generalized Stolz map  
\[ \Theta\colon  R^{\spin}_n(\underline{E}\Gamma)^\Gamma[\mathfrak B^{-1}] \to KO_n(C^\ast_r(\Gamma; \mathbb R))\]  is rationally injective, then $(\widetilde M, \widetilde h)$ positively stably  bounds with respect to $\underline{E}\Gamma$.
\end{lemma}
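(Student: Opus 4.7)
The plan is to reduce the desired positive stable bounding to an equality of Dirac indices and then invoke the rational injectivity of the generalized Stolz map $\Theta$. Let $W$ and $\Phi\colon W\to B\Gamma$ realize the stable bounding of $(M,\varphi,h)$, and let $\widetilde W$ be the $\Gamma$-cover of $W$ pulled back along $\Phi$. Then $\widetilde W$ is a cocompact spin $\Gamma$-manifold with $\partial\widetilde W=\bigsqcup\widetilde M'$ and lifted positive scalar curvature boundary metric $\tilde h'$, naturally equipped with a $\Gamma$-equivariant map to $E\Gamma\hookrightarrow\underline{E}\Gamma$. In particular, $(\widetilde W,\widetilde\Phi,\tilde h')$ represents a class in $R^\spin_{n+1}(\underline{E}\Gamma)^\Gamma$ with $\Theta[\widetilde W]=\ind(D_{\widetilde W})\in KO_{n+1}(C^\ast_r(\Gamma;\mathbb R))$.

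Next, using the rational surjectivity of $\mu_{\mathbb R}$ together with the Baum--Douglas / Conner--Floyd description of $KO^\Gamma_\bullet(\underline{E}\Gamma)$ (rationally, after inverting $\mathfrak B$) by closed cocompact spin $\Gamma$-manifolds, we produce a closed cocompact spin $\Gamma$-manifold $Y$ with a $\Gamma$-equivariant map $f_Y\colon Y\to\underline{E}\Gamma$ such that $\ind(D_Y)=\ind(D_{\widetilde W})$ rationally in $KO_{n+1}(C^\ast_r(\Gamma;\mathbb R))$. Viewing $Y$ as an element of $R^\spin_{n+1}(\underline{E}\Gamma)^\Gamma$ with empty boundary, we form the class $[\widetilde W\sqcup\bar Y]=[\widetilde W]-[Y]\in R^\spin_{n+1}(\underline{E}\Gamma)^\Gamma$, where $\bar Y$ denotes $Y$ with reversed spin structure.

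By construction, $\Theta[\widetilde W\sqcup\bar Y]=\ind(D_{\widetilde W})-\ind(D_Y)=0$ rationally. The assumed rational injectivity of $\Theta$ then forces $[\widetilde W\sqcup\bar Y]=0$ rationally in $R^\spin_{n+1}(\underline{E}\Gamma)^\Gamma[\mathfrak B^{-1}]$. Unravelling the $R^\spin$ bordism relation, this vanishing means that, after a suitable stabilization by $\mathfrak B$ and by taking finitely many disjoint copies, there is a cocompact spin $\Gamma$-manifold $\widetilde V$ endowed with a $\Gamma$-invariant positive scalar curvature metric having product structure near $\partial\widetilde V$, such that $\partial\widetilde V$ equals the corresponding multiple of $\widetilde M'$ with boundary metric the lift of $h'$ --- the contribution of $\bar Y$ disappears since $Y$ is closed. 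This is precisely the positive stable bounding of $(\widetilde M,\tilde h)$ with respect to $\underline{E}\Gamma$.

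The main obstacle is the middle step: realizing an arbitrary class in $KO_{n+1}(C^\ast_r(\Gamma;\mathbb R))$ rationally as the Dirac index of a \emph{closed} cocompact spin $\Gamma$-manifold with an equivariant map to $\underline{E}\Gamma$. While the underlying principle is a Baum--Douglas / Conner--Floyd type surjectivity, the transition in the proper equivariant $KO$-setting from possibly twisted $\spin^c$ cycles to untwisted spin cycles needs some care and may require further Bott stabilization.
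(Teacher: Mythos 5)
Your argument is correct and essentially the paper's own proof: the paper likewise pulls back $W$ to $\widetilde W$, uses the rational surjectivity of $\mu_{\mathbb R}$ to produce a closed spin $\Gamma$-manifold $Z$ with $\ind(D_Z)=-\ind(D_{\widetilde W},\tilde g)$ up to a rational multiple, kills the index by forming the equivariant connected sum $Z_1=\widetilde W\# Z$ away from the boundary (where you instead take the disjoint union with the spin-reversed $Y$ --- the same class in $R^{\spin}_{n+1}(\underline{E}\Gamma)^\Gamma$), and then applies the rational injectivity of $\Theta$ to conclude that $\partial Z_1=\widetilde M$ positively stably bounds. The realization issue you flag at the end --- rationally representing $KO$-classes by closed \emph{spin} rather than twisted $\mathrm{spin^c}$ $\Gamma$-cycles --- is equally present in the paper's proof, where the existence of the closed spin $\Gamma$-manifold $Z$ is simply asserted.
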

\begin{proof}
	For notational simplicity, let us assume $(M, \varphi, h)$ itself bounds with respect to $B\Gamma$,  that is, there exists a compact spin manifold $W$ and a map $\Phi\colon W\to B\Gamma$ such that $\partial W = M$ and  $\Phi|_{\partial W} = \varphi$. 
	
	Endow $W$ with a Riemannian metric $g$ which has product structure near $\partial W = M$ and whose restriction on $\partial W$ is the positive scalar curvature metric $h$. Let $\widetilde W$ be the covering space of $W$ induced by the map $\Phi\colon W\to B\Gamma$ and $\tilde g$ be the lift of $g$ from $W$ to $\widetilde W$. Due to the positive scalar curvature of $\tilde g$ near the boundary of $\widetilde W$,  the corresponding Dirac operator $D_{\widetilde W}$ on $\widetilde W$ with respect to the metric $\tilde g$ has a well-defined higher index $\ind(D_{\widetilde W}, \tilde g)$ in $KO_{n+1}(C_r^\ast(\Gamma;\mathbb R)). $ 
	
	By the (rational) surjectivity of  the Baum-Connes assembly map 
	\[ \mu_{\mathbb R}\colon   KO_\bullet^\Gamma(\underline{E}\Gamma)\to KO_\bullet(C_{r}^*(\Gamma;\mathbb R)), \] there exists a spin $\Gamma$-manifold $Z$ (without boundary) such that the higher index $\ind(D_Z)$ of its Dirac operator $D_Z$ is  equal to $-\ind(D_{\widetilde W}, \tilde g)$ (up to a rational multiple). Let $Z_1$ be the $\Gamma$-equivariant connected sum\footnote{The connected sum is performed away from the boundary of $\widetilde W$.} of $\widetilde W$ with $Z$. Then $Z_1$ is a spin $\Gamma$-manifold whose boundary is equal to $\partial \widetilde W = \widetilde M$. Moreover, the higher index $\ind(D_{Z_1})$ of the Dirac operator $D_{Z_1}$ is zero. Now it follows from the discussion in this subsection that
 $(\widetilde M, \tilde h) = (\partial Z_1, \tilde h)$ positively stably bounds with respect to $\underline{E}\Gamma$. 
	
\end{proof}

\end{document}